\documentclass{amsart}

\usepackage{epsfig,amsmath,xypic}
\usepackage[psamsfonts]{amssymb}
\usepackage[latin1]{inputenc}
\usepackage{pstricks,pst-node,pst-plot}
\usepackage{hyperref}
\usepackage{diagbox}
\usepackage{enumerate}

\usepackage[mathscr]{euscript}
\usepackage{calrsfs}
\DeclareMathAlphabet{\pazocal}{OMS}{zplm}{m}{n}

\newcommand{\ob}{{\hspace{.2em}[\hspace{-.4em}[\hspace{.2em}}}
\newcommand{\cb}{{\hspace{.2em}]\hspace{-.4em}]\hspace{.2em}}}

\newtheorem{theorem}{\bf Theorem}
\newtheorem{proposition}[theorem]{\bf Proposition}
\newtheorem{lemma}[theorem]{\bf Lemma}
\newtheorem{corollary}[theorem]{\bf Corollary}
\newtheorem*{conjecture}{\bf Conjecture}

\theoremstyle{remark}

\newtheorem*{remark}{\bf Remark}

%Miscelleanous
\def\cal{\pazocal}

\def\and{{\quad\text{and}\quad}}
\def\with{{\quad\text{with}\quad}}

%Sets of numbers
\def\N{{\mathbb N}}
\def\Z{{\mathbb Z}}
\def\Q{{\mathbb Q}}

\def\C{{\mathbb C}}

\def\F{{\mathbb F}}

%Elliptic curve

%Homogeneous and projective

\def\P{{\mathbb P}}

%Fractal Sets

%Postcritical set

%Rational maps

%Spaces of quadratic differentials

%quadratic differentials

%vector fields

%1-forms

%Measure

%Derivatives

%Functions
\def\result{{\rm resultant}}

\def\deg{{\rm deg}}

\def\mod#1{{\ ({\rm mod}\ #1)}}

\def\Sik{{\Sigma_k}}
\def\Sk{{\cal S}_k}
\def\Ck{{\cal V}_k}

\def\L{{\cal L}}

\def\Gal{{\rm Gal}(\overline\Q/\Q)}

\def\ssm{{\smallsetminus}}

\subjclass{}

\begin{author}[X.~Buff]{Xavier Buff}\thanks{The research of the first author was supported in part by the ANR grant Lambda ANR-13-BS01-0002}
\email{xavier.buff@math.univ-toulouse.fr}
\address{ %
  Institut de Mathématiques de Toulouse\\
   UMR5219\\ Université de Toulouse, CNRS, UPS\\ F-31062 Toulouse Cedex 9\\ France }
\end{author}

\begin{author}[A.L.~Epstein]{Adam L. Epstein}
\email{adame@maths.warwick.ac.uk}
\address{ %
Mathematics institute\\
 University of Warwick\\
 Coventry CV4 7AL\\
 United Kingdom }
\end{author}

\begin{author}[S.~Koch]{Sarah Koch}\thanks{The research of the third author was supported in part by the NSF}
\email{kochsc@umich.edu}
\address{Department of Mathematics\\
530 Church Street\\
East Hall\\
University of Michigan\\
Ann Arbor MI 48109\\
United States }
\end{author}

\setcounter{tocdepth}{1}

\begin{document} 
 
 \title{Rational maps with a preperiodic critical point}

\begin{abstract}  
We show that the set of conjugacy classes of cubic polynomials with a prefixed critical point, of preperiod $k\geq 1$, is an irreducible algebraic curve. We also establish an analogous result for quadratic rational maps. We then study a closely related question concerning the irreducibility (over $\Q$) of the set of conjugacy classes of unicritical polynomials, of degree $D\geq 2$, with a preperiodic critical point. Our proofs are purely algebraic. 
\end{abstract} 
%We prove that for $n\geq 1$, the affine conjugacy classes of cubic polynomials with a prefixed critical point of preperiod $n$ form an irreducible algebraic curve. We prove that an analogous result holds for quadratic rational maps. Our proof is completely algebraic. We also study a closely related result: the irreducibility over $\Q$ of conjugacy classes of unicritical polynomials having a preperiodic critical point. 
\maketitle
\tableofcontents

\section*{Introduction}

Let $f:\C\P^1\to \C\P^1$ be a rational map.  A point $z\in \C\P^1$ is 
 \begin{itemize}
\item periodic for $f$ with period $n\geq 1$ if $f^{\circ n}(z) = z$ and $n$ is the least such integer; 
\item preperiodic for $f$ with preperiod $k\geq 0$ if $f^{\circ k}(z)$ is periodic for $f$ and $k$ is the least such integer.
\end{itemize}

The moduli space ${\mathcal P}_3$ of affine conjugacy classes of cubic polynomials is isomorphic to $\C^2$. Similarly, the moduli space ${\mathcal M}_2$ of Möbius conjugacy classes of quadratic rational maps is isomorphic to $\C^2$. In both cases, requiring that one critical point is preperiodic to a cycle of period $n\geq 1$ with preperiod $k\geq 0$ (with $k\neq 1$ in the case of quadratic rational maps) defines an algebraic curve. In \cite{milnorquad} and \cite{milnorcubic}, John Milnor introduced these curves and raised various questions about their geometry. In this article, we prove that the curves consisting of those maps with a prefixed critical point are irreducible. 

We shall first study the case of cubic polynomials. Given $k\geq 0$ and $n\geq 1$, the affine conjugacy classes of cubic polynomials with a critical point preperiodic to a cycle of period $n$ with preperiod $k$ form an algebraic curve ${\mathcal S}_{k,n}\subset {\mathcal P}_3$. The following conjecture goes back to John Milnor \cite[Question 5.3]{milnorcubic} in the case $k=0$. 

 \begin{conjecture}
For $k\geq 0$ and $n\geq 1$, the curve ${\mathcal S}_{k,n}$ is irreducible. 
 \end{conjecture}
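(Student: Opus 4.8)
The plan is to establish irreducibility of $\Skp$ over $\C$ (geometric irreducibility, which for a curve defined over $\Q$ is the strongest form of the statement) by exhibiting a single place at which the curve is \emph{totally ramified} over a one-dimensional base. First I would fix Milnor's normal form $f(z)=z^3-3a^2z+b$, whose critical points are $\pm a$, and take $\omega=a$ as the marked critical point; the space $\mathcal P_3$ of affine classes is the quotient of the $(a,b)$-plane by the involution exchanging the two critical points, and $\Skp$ is cut out by the condition that $\omega$ be preperiodic of exact type $(k,n)$. Writing $G_{k,n}(a,b)=f^{\circ(k+n)}(\omega)-f^{\circ k}(\omega)\in\Q[a,b]$ and dividing out the loci $\mathcal S_{k',n'}$ with $k'\le k$ and $n'\mid n$ of strictly smaller type, I obtain the reduced defining polynomial $P_{k,n}\in\Q[a,b]$ whose irreducibility is the goal.

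Second, I would consider the projection $\pi\colon\Skp\to\C$ that records the position of the free (unmarked) critical point, regarding $P_{k,n}$ as a polynomial in $b$ over $\C(a)$. Geometric irreducibility is equivalent to transitivity of the monodromy of $\pi$ on a generic fibre, and the cleanest way to force transitivity is to locate one point of the base over which $\pi$ has a single preimage whose ramification index equals $\deg\pi$. The local monodromy there is then a full $\deg\pi$-cycle, the covering is connected, and $\Skp$ is therefore irreducible. The natural candidate for such a totally ramified point is an end of $\Skp$ at infinity, namely a place where the free critical point escapes to $\infty$ while the marked critical orbit stays finite.

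Third — the dynamical heart of the argument — I would carry out the Puiseux analysis of $G_{k,n}$ at these escape ends and show that the Newton polygon of $P_{k,n}$ there is a single segment whose slope has denominator equal to $\deg_b P_{k,n}$, which is precisely the assertion that the branch at infinity is unique and totally ramified. In the escaping regime the cubic restricts to a quadratic-like map carrying the bounded part of the dynamics, so the exact-type condition on $\omega$ degenerates to the orbit equation of a quadratic polynomial; for $n=1$ these asymptotics collapse to a single geometric series and the totally ramified branch can be read off directly. For general $n$ I would set up the computation by expanding the entire period-$n$ cycle and the full preperiodic orbit simultaneously in the uniformizer at the escape end.

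The hard part will be exactly this last step when $n\ge 2$. As the free critical point escapes, the attracting cycle of period $n$ spreads out and its $n$ points acquire distinct growth rates, so a priori the marked orbit can ``wind'' onto the cycle in several combinatorially distinct ways, each threatening to contribute its own branch at infinity and thereby split the Newton polygon into several segments. Ruling this out amounts to showing that only one winding pattern is compatible with the exactness of the type $(k,n)$ — equivalently, that the monodromy generated by deforming the free critical point mixes the internal labelings of the cycle transitively. This mixing is invisible in the frozen quadratic problem, whose own irreducibility is the classical (and for $n\ge 2$ still open) Gleason/Misiurewicz phenomenon, so it is the genuinely new ingredient that the cubic setting must supply. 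I expect establishing that the free critical point's monodromy re-glues the potentially split fibres to be the decisive and most difficult part of proving the conjecture in full.
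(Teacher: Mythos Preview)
The statement is recorded in the paper as a \emph{conjecture}: the paper does not prove it for general $(k,n)$, only the special case $n=1$ (its theorem on $\mathcal S_{k,1}$). So for $n\ge 2$ there is no proof in the paper to compare your proposal against; you are sketching a strategy for an open problem, and your final paragraph already correctly isolates the obstruction and its link to the unresolved Gleason/Misiurewicz irreducibility question for the limiting quadratic family.

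For the case $n=1$ that the paper does settle, your route is genuinely different from the paper's. The paper's argument is arithmetic: it shows that irreducibility of the defining polynomial $R_k$ over $\C$ is equivalent to irreducibility over $\Q$ (because $R_k$ has nonzero linear part $3(a+b)$ at the origin), then applies the Eisenstein criterion on the unicritical slice $a=0$ (Goksel's input) to force any nontrivial $\Q$-factor to be linear, and finally excludes a linear factor by inspecting the curve at infinity. Your route is geometric: force monodromy transitivity via a single totally ramified place of the projection to the $a$-line.

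There is, however, a concrete gap in your plan already at $n=1$. The paper computes that the homogeneous part of highest degree of $R_k$ is
\[
(b-a)^{4\cdot 3^{k-2}-1}\,(2a+b)^{2\cdot 3^{k-2}},
\]
so the projective closure of $\Sigma_k$ meets the line at infinity in \emph{two} distinct points, $[1:1:0]$ and $[1:-2:0]$. Both lie over $a=\infty$ under the projection $(a,b)\mapsto a$, so that projection has at least two places over $\infty$ and is not totally ramified there; your assertion that for $n=1$ ``the totally ramified branch can be read off directly'' is therefore false as stated. (This two-branch picture is independent of which affine normal form you pick: it reflects two genuinely different asymptotic configurations of the marked orbit as $|a|\to\infty$.) A monodromy proof might still be assembled from the local monodromies at the several branches, but that would require showing the permutations they produce generate a transitive group --- a substantially more delicate computation than a single Newton-polygon reading, and not one your proposal carries out.
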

 
A proof in the case $k=0$ has recently been announced by Matthieu Arfeux and Jan Kiwi \cite{arfeuxkiwi}; it relies on a result of Mary Rees in \cite{rees}, that the set of fixed points of an endomorphism on a certain Teichmüller space is connected. We shall prove the following result. 
 
 \begin{theorem}\label{theo:maincubic}
 For $k\geq 0$, the curve ${\mathcal S}_{k,1}$ is irreducible. 
 \end{theorem}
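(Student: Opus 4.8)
The plan is to realise ${\mathcal S}_{k,1}$ explicitly as the image of an affine plane curve and to prove irreducibility of a one--variable polynomial over a rational function field. Every affine conjugacy class in ${\mathcal P}_3$ is represented by a polynomial in Milnor's normal form $f_{a,v}(z)=z^3-3a^2z+2a^3+v$, for which $+a$ is a critical point with critical value $v=f_{a,v}(a)$; the pair $(a,v)\in\C^2$ is then determined up to the involution $(a,v)\mapsto(-a,-v)$ induced by $z\mapsto -z$. Marking the critical point $+a$, the condition that $+a$ be preperiodic to a fixed point with preperiod $\le k$ is $\Phi_k(a,v)=0$, where
\[
\Phi_k(a,v)\ :=\ f_{a,v}^{\circ k}(v)-f_{a,v}^{\circ(k-1)}(v)\ =\ F_{a,v}\bigl(f_{a,v}^{\circ(k-1)}(v)\bigr),\qquad F_{a,v}(z):=f_{a,v}(z)-z .
\]
Since the preperiod--$\le k$ loci form an increasing chain $V(\Phi_0)\subset V(\Phi_1)\subset\cdots$, the radical of $\Phi_k$ factors as $\mathrm{rad}(\Phi_{k-1})\cdot P_k$, and the plane curve $\{P_k=0\}\subset\C^2$ maps finitely onto ${\mathcal S}_{k,1}$; hence it suffices to prove that $P_k\in\C[a,v]$ is irreducible. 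It has no factor depending on $a$ alone (for each fixed $a$ the polynomial $f_{a,v}^{\circ(k-1)}(v)$ is nonconstant in $v$), so by Gauss's lemma the problem reduces to showing that $P_k$ is irreducible as a polynomial in $v$ over the field $\C(a)$. The base cases are immediate, since $\{P_0=0\}=\{v=a\}$ and $\{P_1=0\}=\{v=-2a\}$ are lines.

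Equivalently, the finite branched cover ${\mathcal S}_{k,1}\to\C_a$, $(a,v)\mapsto a$, must be connected, i.e. its monodromy must act transitively on a generic fibre --- the set of critical values $v$ for which $+a$ has preperiod exactly $k$ to a fixed point. I would prove transitivity by analysing the inertia generators at the branch locus of this projection (equivalently, the Newton--Puiseux expansions of $P_k$ at the bad places), using that the monodromy group is generated by these inertia elements: it is enough to exhibit one place over which the fibre consists of a single point, for then the corresponding decomposition group, hence the whole group, is transitive. The natural places to examine are $a\to\infty$ --- where a Puiseux computation shows that the branches of $\{\Phi_k=0\}$ approach only the two directions $v\sim a$ and $v\sim -2a$, the first being absorbed by the lower--preperiod locus $\{v=a\}$, so that one expects the genuinely preperiod--$k$ branches to form a single Puiseux cycle along $v\sim -2a$ --- and $a\to 0$, where $f_{0,v}(z)=z^3+v$ is unicritical and the fibre is governed by the separately treated irreducibility question for unicritical cubics. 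An induction on $k$ is the natural framework; one may also replace $a$ by the multiplier $\rho=f'(\beta)$ of the attained fixed point as base coordinate, the cover ${\mathcal S}_{k,1}\to\C_\rho$ being finite as well and possibly more convenient to degenerate.

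The hard part will be exactly this control of the monodromy: ruling out a nontrivial partition of the fibre preserved simultaneously by all inertia generators. Equivalently, one must show that $P_k$ stays irreducible over the completion of $\C(a)$ (or $\C(\rho)$) at a suitable bad place --- a purely algebraic but delicate statement about how the orbit of $+a$ collides with a fixed point as the parameter degenerates, and about why the loci of strictly smaller preperiod account for all the spurious branches that must be discarded. Making this collision analysis precise and uniform in $k$, and verifying that exactly one ramified branch survives, is where I expect the real work to lie.
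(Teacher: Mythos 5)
Your setup is sound and your choice of degenerations ($a=0$, where the family becomes the unicritical $z^3+v$, and $a\to\infty$) is exactly where the paper also looks; but the mechanism you propose for exploiting them does not work, and the step you yourself flag as ``the real work'' is precisely the step that fails. You hope to find one place of $\C(a)$ over which the fibre of $\{P_k=0\}\to\C_a$ is a single (totally ramified) point, so that one inertia subgroup is already transitive. Neither of your candidate places has this property. At $a=\infty$, the homogeneous part of top degree of the relevant polynomial is $(v-a)^{4\cdot3^{k-2}-1}(2a+v)^{2\cdot3^{k-2}}$ (for $k\ge2$), so the projective closure of the curve meets the line at infinity at the \emph{two} points $[1:1:0]$ and $[1:-2:0]$, both with positive multiplicity. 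In particular your expectation that the branches along $v\sim a$ are ``absorbed by the lower-preperiod locus'' is wrong: that locus is a reduced line and accounts for only one factor of $(v-a)$, while the genuine preperiod-$k$ curve still carries $4\cdot3^{k-2}-1\ge3$ branches tending to $[1:1:0]$. The local monodromy at $a=\infty$ preserves the partition of the fibre into the cluster near $v\approx a$ and the cluster near $v\approx-2a$, so it cannot be transitive. At $a=0$ the slice polynomial is $b\,s_k(b)$ with $s_k$ separable of degree $2\cdot3^{k-1}-2$, so that fibre is reduced and again far from a single point. No place with the property you need is exhibited, and the known Gleason-type transversality results suggest generic fibres are reduced everywhere.

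The paper gets around this by making the degeneration at $a=0$ do \emph{arithmetic} rather than monodromy-theoretic work: modulo $3$ one computes $s_k\equiv b^{2\cdot3^{k-1}-2}$ with $s_k(0)=3$, so Eisenstein gives irreducibility of $s_k$ over $\Q$; a lemma on the linear part at the origin (which is $3(a+v)$) reduces irreducibility over $\C$ to irreducibility over $\Q$; and then any putative factorization over $\Z$ is forced, by restriction to $a=0$, to have a factor of degree $1$ equal to $\pm(a+v)$, which is incompatible with the curve meeting the line at infinity only at $[1:1:0]$ and $[1:-2:0]$. If you want to salvage a monodromy proof you would need a genuinely new source of transitivity (not a single totally ramified fibre), and nothing in your outline supplies it; as written, the proposal has a gap at its central step.
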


Our proof is purely algebraic. It is largely inspired by the proof of Thierry Bousch \cite{bousch} that for $n\geq 1$, the set of $(c,z)\in \C^2$ such that $z$ is periodic of period $n$ for $f_c:w\mapsto  w^2+c$ is irreducible. The proof will be given in \S \ref{sec:cubic}. 

In \S\ref{sec:quadrat}, we explain how the proof presented for cubic polynomials adapts to the case of quadratic rational maps. 
Given $k\geq 0$ with $k\neq 1$ and $n\geq 1$, the Möbius conjugacy classes of quadratic rational maps with a critical point preperiodic to a cycle of period $n$, with preperiod $k$, form an algebraic curve ${\mathcal V}_{k,n} \subset {\mathcal M}_2$.

 \begin{conjecture}
 For $n\geq 1$, the curve ${\mathcal V}_{0,n}$ is irreducible. For $k\geq 2$ and $n\geq 1$, the curve ${\mathcal V}_{k,n}$ is irreducible. 
 \end{conjecture}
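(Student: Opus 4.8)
\begin{remark}[Proof proposal]
The method of \S\ref{sec:cubic}, suitably adapted, should settle the $n=1$ cases of the conjecture; here is the plan, together with the point at which it stops short of the general statement. When $k=0$ the curve ${\mathcal V}_{0,1}$ parametrizes quadratic rational maps with a fixed critical point, which we may place at $\infty$; then $f$ is a degree--$2$ polynomial, hence affinely conjugate to some $z\mapsto z^2+c$, so ${\mathcal V}_{0,1}$ is the image of the $c$--line in ${\mathcal M}_2\cong\C^2$ and is visibly irreducible. The value $k=1$ is excluded for a structural reason: the critical value $v=f(c)$ of a degree--$2$ map is its own unique preimage, so if $v$ is preperiodic it is in fact periodic with $c$ on its cycle, forcing preperiod $0$; hence ${\mathcal V}_{1,n}=\varnothing$ for every $n$. (For cubic polynomials this collapse does not happen, because the critical value also has a co--critical preimage, and that is exactly what ${\mathcal S}_{1,n}$ records.) The substantive case is therefore $k\geq 2$, $n=1$, the genuine analogue of Theorem~\ref{theo:maincubic}.

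For $k\geq 2$ I would mirror \S\ref{sec:cubic} step by step. First, normalise around the terminal fixed point $\beta$ of the critical orbit --- necessarily finite, by the previous paragraph --- placing $\beta$, its multiplier $\mu=f'(\beta)$, and the marked critical point in standard position; this gives a two--parameter family (compatible with Milnor's $(\sigma_1,\sigma_2)$ coordinates on ${\mathcal M}_2$) in which ${\mathcal V}_{k,1}$ is the zero set of a polynomial $P_k\in\Q[\mu,\nu]$, with $\nu$ the other coordinate, obtained after removing the lower--preperiod loci from the naive vanishing locus exactly as for ${\mathcal S}_{k,1}$ and checking the normalisation creates no spurious component. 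Second, and this is the crux, one studies the branched covering ${\mathcal V}_{k,1}\to\C\P^1$ recording $\mu$ near a distinguished value --- in the Bousch--style argument, near $\mu=\infty$, where $f$ degenerates onto a boundary stratum of a compactification of ${\mathcal M}_2$. Expanding the critical orbit in Puiseux series there, one aims to show that a \emph{single} branch, of ramification index equal to the full degree of the covering, lies over this point; that makes the monodromy transitive and hence $P_k$ irreducible over $\C(\mu)$, which is geometric irreducibility of ${\mathcal V}_{k,1}$ and in particular irreducibility over $\Q$. (Alternatively, in Bousch's spirit, one proves $\Q$--irreducibility of $P_k$ by reducing modulo a well--chosen prime, at which the dynamics degenerates to a monomial map and the preperiodic points are governed by cyclotomic polynomials, and then recovers geometric irreducibility from a smooth $\Q$--rational point on the curve.)

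The main obstacle --- and the reason the conjecture is open for $n\geq 2$ --- is precisely the single--branch statement at the distinguished point. For $n=1$ the combinatorics are tame: near the limit the critical orbit is essentially forced, and transitivity of the monodromy reduces to the irreducibility of one univariate ``Misiurewicz--type'' polynomial, which an Eisenstein--type criterion on its Newton polygon should settle. For general $n$ one must replace $\beta$ by a cycle of period $n$, so that the dynatomic polynomial $\Phi_n$ enters the normalisation and the monodromy now also permutes the $n$ points of the cycle, the two critical points of $f$, and the sheets of $\Phi_n$; disentangling these --- in effect, proving a version of Bousch's irreducibility of $\Phi_n$ relative to the base parameter --- is the step I do not see how to carry out in general, and it is exactly what the conjecture asserts can be done.
\end{remark}
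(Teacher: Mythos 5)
The statement you are addressing is a conjecture, and the paper itself proves only the case $k\geq 2$, $n=1$ (Theorem~\ref{theo:mainquad}); your preliminary observations are sound as far as they go. The irreducibility of ${\mathcal V}_{0,1}$ via conjugating the fixed critical point to $\infty$ is correct (though the conjecture for $k=0$ concerns all $n$, and for $n\geq 2$ your reduction to $z\mapsto z^2+c$ leaves exactly Milnor's original open problem). Your explanation of why $k=1$ is excluded --- the critical value of a degree-$2$ map has the critical point as its unique preimage, so a preperiodic critical value forces the critical point onto the cycle --- is correct and is the structural reason behind the paper's standing hypothesis $k\neq 1$.

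For the substantive case $k\geq 2$, $n=1$, however, what you offer is a plan whose crux steps are exactly the ones left unexecuted, and the plan diverges from what actually works. First, the normalization: the paper does not use the multiplier $\mu$ of the terminal fixed point as a coordinate at all. It marks the fixed point $\alpha=0$ in the critical orbit, a \emph{second} fixed point $\beta=\infty$ (which exists because $\alpha$ cannot be a triple fixed point), and the critical point $\omega=1$, obtaining the explicit family $G_{a,b}(z)=az(b-z)/(1+(b-2)z)$ with $a=G_{a,b}(1)$ the critical value; the curve is then cut out by a genuine polynomial $R_k=P_k-bQ_k\in\Z[a,b]$. It is not clear that your $(\mu,\nu)$ coordinates yield polynomial defining equations or avoid spurious components. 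Second, the distinguished specialization is not a degeneration at $\mu=\infty$ with Puiseux expansions and a single fully ramified branch --- that monodromy statement is precisely what you say you ``aim to show'' and never do --- but the innocuous-looking line $b=2$, on which $G_{a,2}(z)=az(2-z)$ is a quadratic polynomial; there $r_k(a)=p_k(a)-2$ satisfies $r_k\equiv a^{2^{k-1}}\pmod 2$ with constant term $-2$, so Eisenstein at $2$ applies (no cyclotomic polynomials enter). Third, the lift from the line back to the surface uses the equality $\deg r_k=\deg R_k=2^{k-1}-1$, so any nontrivial factorization of $R_k$ over $\Z$ restricts to a nontrivial factorization of the irreducible $r_k$ --- a degree count, not a monodromy or rational-point argument. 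Finally, geometric irreducibility is recovered not from ``a smooth $\Q$-rational point'' but from Lemma~\ref{lem:criterion}: $R_k$ vanishes at the origin (a point \emph{outside} the parameter space $\Lambda$) with nonzero linear part $-b$, which pins down any irreducible factor through the origin and forces it to be Galois-stable. Without these concrete choices your outline does not close, and for $n\geq 2$ you correctly identify that the analogous single-branch/irreducibility-of-$\Phi_n$ step is open.
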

 
 In this article, we shall prove the following result. 
 
 \begin{theorem}\label{theo:mainquad}
 For $k\geq 2$, the curve ${\mathcal V}_{k,1}$ is irreducible. 
 \end{theorem}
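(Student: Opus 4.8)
\medskip
\noindent\emph{Proof proposal.} I would prove this by transporting the argument of \S\ref{sec:cubic} for cubic polynomials (Theorem~\ref{theo:maincubic}), which is itself modelled on Bousch's irreducibility proof \cite{bousch} for the periodic-point loci of $w\mapsto w^2+c$. The first task is to fix a normal form. A quadratic rational map together with a marked fixed point and a marked simple critical point can be normalized --- placing the fixed point at $\infty$ and the critical point at $0$ --- as a two-parameter family $f=f_{s,\mu}$, where $\mu=f'(\infty)$ is the multiplier of the fixed point; this ambient surface is the natural finite cover of $\mathcal{M}_2$ on which $\mathcal{V}_{k,1}$ lives, the terminal fixed point $f^{\circ k}(0)$ and the preperiodic critical point being canonically marked there. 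In these coordinates $\mathcal{V}_{k,1}$ is cut out by the conditions $f^{\circ k}(0)=\infty$ and $f^{\circ(k-1)}(0)\neq\infty$. Clearing denominators along the orbit $0\mapsto f(0)\mapsto\cdots\mapsto f^{\circ k}(0)$ yields a polynomial $P_k(s,\mu)$, and, exactly as dynatomic polynomials are assembled from $z\mapsto f^{\circ k}(z)-z$, the exact-preperiod requirement is imposed by passing to $P_k/\gcd(P_k,P_{k-1})$, i.e.\ by deleting the component $\mathcal{V}_{k-1,1}$. First I would record the elementary algebra of this polynomial --- its bidegree, that it is reduced, and that the resulting curve is smooth outside a controlled finite set --- in parallel with the cubic computation.

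The core is a Bousch-type irreducibility criterion. One regards $\mathcal{V}_{k,1}$ as a branched cover of a projective line, the analogue of Bousch's parameter line, here one of the two coordinates of the ambient surface. To show the cover is connected, hence the curve irreducible, it suffices to produce one base point over which the fibre is a single, totally ramified point: the local monodromy there is then a full cycle, which acts transitively on the generic fibre. Concretely this is a Newton-polygon computation at a degenerate value of the parameter --- one checks that the defining polynomial has there a single Puiseux branch whose ramification index equals the degree of the cover, so that all sheets form one Galois orbit. The natural candidates are the degenerate parameters: the corner of $\overline{\mathcal{M}_2}$ where $\mu\to\infty$, at which the fixed point at $\infty$ becomes superattracting so that $f$ degenerates to a quadratic polynomial while the marked critical point simultaneously escapes, and the parabolic value $\mu=1$, where the backward orbit tree of $\infty$ collides according to a fixed combinatorial pattern. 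As in Bousch's proof, where the period-$n$ points all funnel onto one totally ramified branch as $c\to\infty$, I expect exactly one such degenerate point to support the required full ramification. Since this argument produces irreducibility over $\C$, i.e.\ geometric irreducibility, irreducibility over $\Q$ follows a fortiori and needs no separate discussion (unlike the unicritical-polynomial question).

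The main obstacle --- and the reason for the hypothesis $k\ge 2$ --- is twofold. First, the degeneration analysis is genuinely harder than for polynomials: along $\partial\mathcal{M}_2$ a quadratic rational map can drop to degree $1$ or $0$ in several distinct ways, and the two critical points can collide or exchange roles in the limit, so one must pin down which stratum the closure of $\mathcal{V}_{k,1}$ meets, show it meets it in a single point, and compute the ramification there --- the interaction of the two critical points being the new feature absent for cubic \emph{polynomials}, whose defining equation for $\mathcal{S}_{k,1}$ ignores the second critical point entirely. Second, the exact-preperiod bookkeeping is more delicate: for $k=1$ the locus $\{f(c)\ \text{fixed}\}$ genuinely carries an extra component --- reflecting that a fixed point of a quadratic rational map has a second, independent preimage --- which is already why the very definition of $\mathcal{V}_{k,1}$ excludes $k=1$; for $k\ge 2$ one must verify that $P_k/\gcd(P_k,P_{k-1})$ has no spurious factors, i.e.\ that a critical point can acquire preperiod exactly $k$ only along $\mathcal{V}_{k,1}$ and not in some coincidental way.
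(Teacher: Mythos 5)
Your strategy---normalize, write a defining polynomial, and prove geometric irreducibility by exhibiting transitive monodromy for the projection to a line---is legitimate in outline, but the proposal has a genuine gap at its only load-bearing point: the existence of a base parameter over which the fibre is a single, totally ramified point. You name candidates ($\mu\to\infty$, $\mu=1$) and then write that you \emph{expect} one of them to work; nothing in the proposal verifies this, and that verification \emph{is} the proof. There is moreover reason for scepticism that the hypothesis even holds. In the model case you invoke, Bousch does not find a single totally ramified fibre for the period-$n$ curves of $w\mapsto w^2+c$: over $c=\infty$ the periodic points split into several Puiseux branches organized by the cycle structure, and his transitivity argument must combine the monodromy at infinity with the local monodromies at the parabolic parameters. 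Likewise, the parallel computation in this paper for cubic polynomials (Lemma~\ref{lem:intersectinfinity} and Corollary~\ref{coro:intersectinfinity}) shows the closure of the curve meeting the line at infinity in \emph{two} points, with multiplicities $4\cdot 3^{k-2}-1$ and $2\cdot 3^{k-2}$, so the naive ``totally ramified at the degenerate parameter'' picture already fails in the closest analogue. Until you carry out an explicit Newton--Puiseux computation at a specific point and obtain a single branch of full ramification index, the argument is incomplete. (A smaller inaccuracy: preperiod $k=1$ to a cycle is not merely a locus with an extra component for quadratic rational maps---it is impossible, since the critical value has the critical point as its only preimage, which would force the critical point itself to be periodic.)

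For contrast, the paper's proof avoids monodromy entirely. In the normalization $G_{a,b}(z)=az(b-z)/\bigl(1+(b-2)z\bigr)$ (critical point at $1$, terminal fixed point at $0$, second fixed point at $\infty$), the curve is cut out by $R_k=P_k-bQ_k\in\Z[a,b]$. Restricting to the line $b=2$, where $G_{a,2}(z)=az(2-z)$ is a quadratic polynomial, gives $r_k(a)=R_k(a,2)$, which is congruent to $a^{2^{k-1}}$ modulo $2$ with constant term $-2$, hence Eisenstein and irreducible over $\Q$; since $\deg r_k=\deg R_k$, any nontrivial factorization of $R_k$ would restrict to a nontrivial factorization of $r_k$. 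Irreducibility over $\C$ then follows from the Galois-descent criterion of Lemma~\ref{lem:criterion}, using that $R_k$ has nonzero linear part at the origin. The moral is that the hard transcendental step you defer (the ramification analysis at the boundary of moduli space) is replaced by arithmetic modulo $2$ on the polynomial sublocus $b=2$.
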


The proofs of Theorems \ref{theo:maincubic} and \ref{theo:mainquad} rely on the following result due to Vefa Goksel \cite{goksel}. 
Assume $D\in \{2,3\}$. Let $b_1\in \C$ and $b_2\in \C$ be two algebraic numbers such that $0$ is preperiodic to a fixed point of $z\mapsto z^D+b_1$ and $z\mapsto z^D+b_2$, with the same preperiod $k\geq 2$. Then, $b_1$ and $b_2$ are Galois conjugate. 

More generally, if $D\geq 2$ is an integer,  the unicritical polynomials $z\mapsto z^D+b_1$ and $z\mapsto z^D+b_2$ are affine conjugate if and only if $b_1^{D-1} = b_2^{D-1}$. 
John Milnor \cite{milnorunicritical} asked whether one can classify the Galois conjugacy classes of parameters $b^{D-1}$ such that the critical point of $z\mapsto z^D+b$ is preperiodic. 
In \S\ref{sec:unicritical}, we characterize those Galois conjugacy classes when the period is $1$ or $2$ for any prime power $D=p^e$, and when the period is $3$ for $D=2$ and $D=8$. 

\section{Cubic polynomials\label{sec:cubic}}

Every cubic polynomial is affine conjugate to a polynomial of the form 
\[F_{a,b}(z) = z^3-3a^2z+2a^3+b,\quad(a,b)\in \C^2.\]
Those polynomials have critical points at $±a$ and $b= F_{a,b}(a)$ is a critical value. A conjugacy between two such polynomials either preserves or exchanges the two critical points. Consequently, the moduli space ${\mathcal P}_3$ is obtained by identifying $(a,b)$ with $(-a,-b)$. It follows that in order to prove Theorem \ref{theo:maincubic}, it is enough to show that the set $\Sk$ of parameters $(a,b)\in \C^2$ such that $a$ is preperiodic to a fixed point with preperiod $k\geq 0$ is irreducible. 
%{\color{red}Why? Should we talk about the map $(a,b)\mapsto \text{affine conjugacy class of }F_{a,b}\text{ in }\mathcal P_3$  ?}

Note that for $k=0$, the critical point $a$ is fixed if and only if $(a,b)$ belongs to the line $\L_0:=\{b=a\}\subset \C^2$. Thus, ${\cal S}_0=\L_0$ is irreducible.

Note that for $k=1$, the critical value $b=F_{a,b}(a)$ is fixed if and only if 
\[b = F_{a,b}(b) = b^3-3a^2b+2a^3 = b+ (a-b)^2(2a+b).\]
Consequently, ${\cal S}_1 =  \L_1\ssm\L_0 =  \L_1\ssm \bigl\{(0,0)\bigr\}$, with  $\L_1 := \{b=-2a\}\subset \C^2$. Thus, ${\cal S}_1$ is irreducible.

For the remainder of \S\ref{sec:cubic}, we assume that $k\geq 2$. 

\subsection{An equation for $\Sk$\label{sec:equationcubic}}

On the one hand, if $a$ is preperiodic to a fixed point of $F_{a,b}$ with preperiod $k$, then the points $F_{a,b}^{\circ (k-1)}(a)$ and $F_{a,b}^{\circ k}(a)$ are distinct and have the same image under $F_{a,b}$. For $j\geq 0$, let $P_j\in \Z[a,b]$ be the polynomial defined by 
\[P_j(a,b) := F_{a,b}^{\circ j}(a).\]
Then, 
\[P_0(a,b)= a,\quad P_1(a,b) =b,\and P_{j+1} = P_j^3-3a^2P_j+2a^3+b,\]
so that for $j\geq 1$, the polynomial $P_j$ has degree $3^{j-1}$. 
Note that 
\[F_{a,b}(z) - F_{a,b}(w) = (z-w) H(z,w)\with H(z,w) = z^2+zw+w^2-3a^2.\]
Thus, the polynomial
\[Q_k := H(P_{k-1},P_k)\in \Z[a,b]\]
has degree $2\cdot 3^{k-1}$ and vanishes on $\Sk$. 

On the other hand, $H(z,z) =0$ if and only if $z^2=a^2$, i.e. $z=±a$. In particular, if $a=F_{a,b}(a)$, i.e. if $a=b$, then $P_{k-1}(a,b) = P_k(a,b)=a$ and $Q_k(a,b) = 0$. Thus, $b-a$ divides $Q_k$ and so, 
\[Q_k = (b-a) R_k\with R_k\in \Z[a,b].\]
The polynomial $R_k$ has degree $2\cdot 3^{k-1}-1$ and vanishes on $\Sk$. Set 
\[\Sik:= \bigl\{(a,b)\in \C^2~;~R_k(a,b)=0\bigr\}.\]

Then, $\Sk\subset \Sik$. Note that there are points in $\Sik\ssm \Sk$: 
\begin{enumerate}[(i)]
\item\label{case1} either $F_{a,b}^{\circ (k-1)}(a) = F_{a,b}^{\circ k}(a) = a$ in which case $a$ is fixed; 
\item\label{case2} or $F_{a,b}^{\circ (k-1)}(a) = F_{a,b}^{\circ k}(a) = -a$ in which case $-a$ is fixed and $a$ is prefixed to $-a$ with preperiod $j$ for some $j \in \ob2,k-1\cb$.
\end{enumerate}

\begin{remark}
Note that case \eqref{case1} occurs if and only if $a=b=0$, i.e., $\Sik\cap {\cal S}_1 = \bigl\{(0,0)\bigr\}$.
Indeed, for $j\geq 1$, 
\[\frac{\partial P_{j+1}}{\partial b} = 3(P_j^2-a^2)\frac{\partial P_j}{\partial b}+1.\]
Since $P_j(a,a) = a$, it follows by induction that 
\[\frac{\partial P_j}{\partial b}(a,a) = 1.\]
Since
\[\frac{\partial Q_k}{\partial b} = (2P_{k-1}+P_k)\frac{\partial P_{k-1}}{\partial b} + (P_{k-1}+2P_k) \frac{\partial P_k}{\partial b},\]
we deduce that 
\[R_k(a,a) = \frac{\partial Q_k}{\partial b}(a,a) = 6a.\]
Thus, on the line $\{a=b\}\subset \C^2$, the polynomial $R_k$ only vanishes at $(0,0)$. 
\end{remark}

\begin{figure}[h] %  figure placement: here, top, bottom, or page
   \centering
   \includegraphics[width=2.5in]{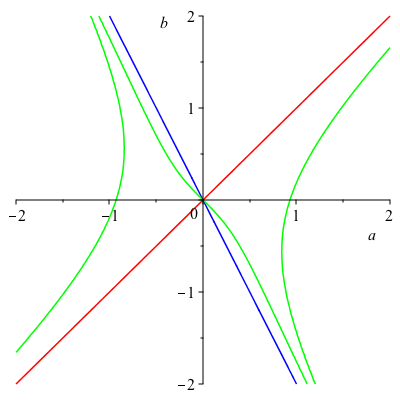} 
   \caption{Three curves drawn in $\mathbb R^2$: ${\cal S}_0$ is red, ${\cal S}_1$ is blue, and ${\cal S}_2$ is green.}
   \label{cubicpic:color}
\end{figure}

%\begin{figure}[htbp] %  figure placement: here, top, bottom, or page
%   \centering
%   \includegraphics[width=2.5in]{cubic_color.jpg} 
%   \caption{Three curves drawn in $\mathbb R^2$: ${\cal S}_0$, ${\cal S}_1$, and ${\cal S}_2$.}
%   \label{cubicpic:grey}
%\end{figure}
%

Theorem \ref{theo:maincubic} is a corollary of the following result, the proof of which  occupies the remainder of \S\ref{sec:cubic}. 

\begin{proposition}\label{prop:irredC}
For $k\geq 2$, the polynomial $R_k\in \Z[a,b]$ is irreducible over $\C$. 
\end{proposition}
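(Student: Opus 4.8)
The plan is to follow Bousch's strategy for the multibrot/Gleason-type irreducibility results, adapted to this two-variable setting. The overall architecture will be: (1) reduce irreducibility over $\C$ to irreducibility over $\Q$ plus a statement that $R_k$ does not factor through a nontrivial cyclotomic twist or a power of a smaller polynomial, using a Galois/monodromy argument; (2) establish irreducibility over $\Q$ by a dynamical argument producing a single Galois orbit; (3) handle the geometric (over $\C$ versus over $\Q$) gap by a careful Newton-polygon or valuation estimate at a well-chosen place, which is the part I expect to be hardest.

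\textbf{Step 1: Restrict to a slice and reduce to a one-variable problem.} The key observation is that for each fixed generic value of $a$, the equation $R_k(a,b)=0$ describes the parameters $b$ for which the critical value orbit $b\mapsto F_{a,b}(b)\mapsto\cdots$ satisfies a prefixed-point relation; but it is cleaner to specialize the \emph{other} way. I would consider the specialization $a\mapsto$ a parameter and examine $R_k$ as a polynomial in $b$ over $\Q(a)$, or better, specialize to a value of $a$ where a theorem already in the excerpt applies. Indeed, the excerpt quotes Goksel's theorem: for $D=3$, if $0$ is preperiodic to a fixed point of $z\mapsto z^3+b_i$ with the same preperiod $k\geq 2$, then $b_1,b_2$ are Galois conjugate. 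The polynomial $F_{0,b}(z)=z^3+b$ is exactly the unicritical cubic, and the critical point $a=0$ of $F_{0,b}$ is then the point whose orbit we track. So the restriction of $R_k(a,b)$ to the line $\{a=0\}$ is, up to units and the trivial factors catalogued in the Remark (which on $\{a=0\}$ degenerate: note $R_k(0,0)=0$ from the Remark's formula $R_k(a,a)=6a$), essentially the minimal-polynomial-type object whose roots Goksel shows form a single Galois orbit. The first substantive step is therefore to identify $R_k(0,b)$ precisely: show that $R_k(0,b) = b^{c}\cdot G_k(b)$ where $G_k\in\Z[b]$ is (a power of) the irreducible polynomial whose roots are the $b$ with $0$ strictly preperiodic of preperiod $k$ to a fixed point, and that $G_k$ is in fact squarefree with a single Galois orbit of roots by Goksel. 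This requires checking that the trivial branches (ii) of the excerpt — where $a=0$ is prefixed to $-a=0$ — collapse into the diagonal case and contribute only the $b^c$ factor.

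\textbf{Step 2: Propagate irreducibility off the slice.} Suppose $R_k = AB$ is a nontrivial factorization over $\C$; WLOG (after absorbing scalars and using that $R_k\in\Z[a,b]$ with content $1$) we may take $A,B\in\overline\Q[a,b]$. Restricting to $\{a=0\}$ gives $R_k(0,b)=A(0,b)B(0,b)$. If $G_k$ is irreducible over $\Q$ with a single Galois orbit and appears to the first power in $R_k(0,b)$, then $G_k$ must divide exactly one of $A(0,b),B(0,b)$, say $A(0,b)$; since $G_k$'s roots form one $\operatorname{Gal}(\overline\Q/\Q)$-orbit and the factorization $R_k=AB$ can be chosen Galois-stable as a pair $\{A,B\}$, a Galois-theoretic argument forces all the conjugates of each root to lie on the $A$-side, pinning down $A(0,b)$ up to scalar. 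The remaining task is to show this information at $a=0$ forces $A$ to be (a scalar multiple of) $R_k$ globally — i.e. that the curve $\{R_k=0\}$ has no "horizontal" component and that its restriction to the $a=0$ axis is "full". For this I would compute $\deg_b R_k$ (from the degree $2\cdot3^{k-1}-1$ and the structure of $Q_k$, the $b$-degree should be computable) and show $\deg_b R_k(0,b)$ after removing the $b^c$ factor equals $\deg G_k$, so no degree is lost in the specialization; hence $A$ and $R_k$ have the same $b$-degree, and since $A\mid R_k$ and $R_k$ is primitive, $A = R_k$ up to a unit. Ruling out a horizontal component $\{a=\alpha\}$ dividing $R_k$ requires showing $R_k$ does not vanish identically on any vertical line $a=\alpha$; this follows from the Remark (it is nonzero at $(\alpha,\alpha)$ for $\alpha\neq 0$) together with a separate check at $a=0$ that $R_k(0,b)\not\equiv 0$, which the Step-1 analysis provides.

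\textbf{Main obstacle.} The delicate point is \emph{not} the Galois orbit statement at $a=0$ — that is imported from Goksel — but rather the \emph{geometric} irreducibility, i.e. excluding a factorization $R_k = A\overline A$ where $A\in\overline\Q[a,b]\setminus\Q[a,b]$ is obtained from a $\Q$-irreducible polynomial by base change (the "conjugate pair" scenario), and excluding $R_k = c\cdot A^m$ with $m>1$. The conjugate-pair scenario is killed precisely by the single-Galois-orbit conclusion: if $R_k(0,b)$ (mod the $b^c$ factor) were $A(0,b)\overline{A(0,b)}$ with $A(0,b)\notin\Q[b]$, its irreducible $\Q$-factor would have roots splitting into two Galois-conjugate packets, contradicting that all roots of $G_k$ form one orbit — \emph{provided} $G_k$ is irreducible over $\Q$, which is exactly Goksel's theorem. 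The power scenario $R_k = cA^m$ is excluded by showing $R_k$ is squarefree; the natural route is to exhibit a point of $\Sik$ (e.g. a suitable genuinely-preperiodic parameter) at which the curve is smooth, or to show $R_k(0,b)$ is squarefree — which again should follow from $G_k$ being squarefree (roots of the minimal polynomial of a Galois orbit), forcing $m=1$. So the real work is the bookkeeping of Step 1 (pinning $R_k(0,b)$ exactly, including multiplicities and the $b$-power) and of Step 2 (the $b$-degree count showing no degree drops under $a\mapsto 0$); I expect the degree computation for $\deg_b R_k$ and the verification that specialization at $a=0$ is non-degenerate to be the most technical part of the argument.
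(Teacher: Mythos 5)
Your overall skeleton matches the paper's: restrict to the unicritical slice $\{a=0\}$, prove the slice polynomial is (essentially) irreducible over $\Q$, and propagate. Two remarks on the slice step: the paper gets $R_k(0,b)=b\,s_k(b)$ with $s_k$ irreducible over $\Q$ by a self-contained Eisenstein argument at the prime $3$ (reducing mod $3$ gives $s_k\equiv b^{2\cdot 3^{k-1}-2}$ and $s_k(0)=3$), rather than importing Goksel's single-Galois-orbit statement; if you go the Goksel route you must separately establish squarefreeness of your $G_k$, as you note. Also, the reduction from irreducibility over $\C$ to irreducibility over $\Q$ is done in the paper not by monodromy but by an elementary lemma: since the lowest-degree homogeneous part of $R_k$ is $3(a+b)$, a linear form, any $\C$-factorization has exactly one factor vanishing at the origin, and a Galois-stability argument forces that factor to have rational coefficients. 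Your parity-of-$b$-degree idea for excluding a conjugate pair can be made to work, but the origin argument is cleaner and also disposes of the power scenario.

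The genuine gap is in your Step 2. Because $R_k(0,b)=b\,s_k(b)$ carries the extra factor $b$ (your $b^c$ with $c=1$), the degree bookkeeping you propose does \emph{not} force the factor $A$ containing $s_k$ to be all of $R_k$: the cofactor $B$ may legitimately satisfy $B(0,b)=\pm b$, hence have $b$-degree $1$, and no vertical-line argument or count of $\deg_b$ rules this out. Concretely, the surviving candidate factorization is $R_k=\pm(a+b)\cdot T_2$ (the linear part $3(a+b)$ of $R_k$ at the origin and the restriction $t_1=\pm b$ pin the putative degree-one factor down to exactly $\pm(a+b)$), and this is perfectly consistent with everything you have established. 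The paper needs a further input to kill it: the top-degree homogeneous part of $R_k$ is computed to be $(b-a)^{4\cdot 3^{k-2}-1}(2a+b)^{2\cdot 3^{k-2}}$, so the projective closure of $\{R_k=0\}$ meets the line at infinity only at $[1:1:0]$ and $[1:-2:0]$, never at $[1:-1:0]$; hence $a+b$ does not divide $R_k$. Without this (or some equivalent argument showing $R_k(a,-a)\not\equiv 0$), your proof is incomplete precisely at the point where the exceptional branch $b=0$ of the slice, i.e.\ the degenerate locus described in the paper's Remark, re-enters the factorization.
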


\subsection{Behavior near the origin}%Reduction to irreducibility over $\Q$
\label{sec:irredQ}

We now show that in order to prove Proposition \ref{prop:irredC}, it is enough to prove that $R_k$ is irreducible over $\Q$. 

\begin{proposition}
The polynomial $R_k\in \Z[a,b]$ is irreducible over $\C$ if and only if it is irreducible over $\Q$. 
\end{proposition}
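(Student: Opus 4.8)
The plan is to show that a polynomial $R \in \Z[a,b]$ which is \emph{primitive} and irreducible over $\Q$ remains irreducible over $\C$, by exploiting the action of $\Gal$ on a hypothetical factorization. First I would observe that $R_k$ is primitive: indeed, the computation in the Remark shows $R_k(a,a) = 6a$, so the content of $R_k$ (as a polynomial in $b$ over $\Z[a]$, or as an element of $\Z[a,b]$) divides $6a$ and hence is $1$; more simply, $\gcd$ of the coefficients divides the coefficients of $6a$, forcing it to be $\pm 1$. So by Gauss's lemma, irreducibility of $R_k$ over $\Q$ is equivalent to irreducibility over $\Q[a,b]$, and what remains is the passage from $\Q$ to $\C$.

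The key step is the following general principle: if $R \in \Q[a,b]$ is irreducible over $\Q$ but reducible over $\C$, then it factors over $\Qbar$, and the Galois group $\Gal$ permutes the irreducible factors transitively (since $R$ is $\Gal$-invariant and any factor's orbit gives a $\Gal$-invariant, hence rational, divisor of $R$, which must be $R$ itself up to a constant). In particular \emph{all} irreducible factors over $\C$ have the same degree, say $R = c\prod_{i=1}^m R_i$ with $m \geq 2$ and $\deg R_i$ independent of $i$. One then derives a contradiction by exhibiting a point, or a local behavior of $R_k$, incompatible with such a symmetric factorization — typically a smooth point of the curve $\{R_k = 0\}$ lying on only one branch, or a point where the local multiplicity structure forces an asymmetry among the $R_i$. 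The natural candidate, given the section title ``Behavior near the origin'' and the preceding Remark isolating $(0,0)$, is the origin: one analyzes the Newton polygon / lowest-order terms of $R_k$ at $(0,0)$ and shows the local factorization there has some feature (e.g., a factor appearing with multiplicity one, or an odd number of branches through a distinguished point) that cannot be distributed evenly among $m \geq 2$ Galois-conjugate global factors unless $m = 1$.

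Concretely, I would compute the expansion of $Q_k = H(P_{k-1}, P_k)$ near $(0,0)$. Since $P_0 = a$, $P_1 = b$, and $P_{j+1} = P_j^3 - 3a^2 P_j + 2a^3 + b$, one gets $P_j \equiv b \pmod{(a,b)^2}$ for all $j \geq 1$, so $H(P_{k-1},P_k) = P_{k-1}^2 + P_{k-1}P_k + P_k^2 - 3a^2 \equiv 3b^2 - 3a^2 \pmod{(a,b)^3}$, and then $R_k = Q_k/(b-a)$ has lowest-order part proportional to $(3b^2 - 3a^2)/(b-a) = -3(a+b)$ up to the relevant degree bookkeeping — that is, $R_k$ has a \emph{smooth} branch through the origin with tangent line $\{a + b = 0\}$ (this is consistent with $\L_1 = \{b = -2a\}$ only if the next-order terms are tracked, so the honest computation must go one order further, but the point is that near $(0,0)$ the curve $\{R_k=0\}$ has exactly one branch to leading order, of multiplicity one). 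If $R_k = c\prod_{i=1}^m R_i$ over $\C$, then each $R_i(0,0)$ is either $0$ or not, and since the $R_i$ are Galois-conjugate, either all vanish at the origin or none do; the total order of vanishing of $R_k$ at $(0,0)$ is then $m$ times the common order of each $R_i$, which contradicts the computed order of vanishing being $1$ (forcing $m = 1$) — unless none of the $R_i$ vanish at the origin, in which case one must instead run the same symmetry argument at a different distinguished point, such as a point on $\Sik \sm \Sk$ of type \eqref{case1} or \eqref{case2}, or use the degree-$2\cdot 3^{k-1}-1$ being coprime-to-$m$ type arithmetic obstruction.

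The main obstacle I anticipate is \emph{making the local computation at the origin clean enough to be decisive}: the naive leading term $-3(a+b)$ suggests order $1$, but one must verify that no cancellation or higher-order degeneracy spoils this, and more importantly one must confirm the origin actually lies on $\{R_k = 0\}$ (the Remark shows $R_k(0,0) = 0$ since $R_k(a,a) = 6a$ vanishes at $a=0$, so this is fine) and that the branch is genuinely smooth rather than, say, a node with two Galois-conjugate branches — which would \emph{not} give a contradiction. If smoothness at the origin fails, the fallback is to locate another rational point of the curve through which there is a single smooth branch, or to combine the parity/arithmetic of $\deg R_k = 2\cdot 3^{k-1} - 1$ with the constraint $m \mid \deg R_k$ and $m \mid$ (order of vanishing at every rational point) to squeeze out $m = 1$. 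I expect the paper handles this via the origin, exactly as the section heading advertises.
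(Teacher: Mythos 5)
Your proposal is correct and follows essentially the same route as the paper: the decisive input is that the lowest-degree homogeneous part of $R_k$ is $3(a+b)$ (so $R_k$ vanishes to order exactly $1$ at the origin), and the Galois action on a hypothetical factorization over $\C$ then forces a single rational factor through the origin, which must be $R_k$ itself. Your residual worries (a possible node, or the case where no factor vanishes at the origin) are already excluded by this order-of-vanishing computation together with $R_k(0,0)=0$, exactly as in the paper's Lemma on the behavior near the origin.
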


\begin{proof}
To prove the proposition, we shall use the following criterion. 

\begin{lemma}\label{lem:criterion}
Let $R\in \Q[a,b]$ be a polynomial vanishing at the origin with nonzero linear part. 
Then, $R$ is irreducible over $\C$ if and only if $R$ is irreducible over $\Q$. 
\end{lemma}

\begin{proof}
Clearly, if $R$ is irreducible over $\C$, then it is irreducible over $\Q$. 

Conversely, suppose that $R$ is irreducible over $\Q$. We will show that $R$ is irreducible over $\C$. Suppose that $R=S\cdot T$ where $S\in \C[a,b]$ is irreducible and vanishes at the origin. Such a polynomial $S$ exists because $R$ vanishes at the origin. It then follows that $T\in \C[a,b]$ does not vanish at the origin, since otherwise, the linear part of $R$ at the origin would vanish. Multiplying $S$ by a nonzero constant, we may assume that $T(0,0)=1$. In that case, the linear parts of  $R$ and $S$ at the origin coincide. 

Since $R\in \Q[a,b]$, the polynomials $S$ and $T$ have algebraic coefficients. We claim that the coefficients of $S$ are in fact rational. Indeed, assume $\sigma\in \Gal$. Let $S^\sigma$ be the image of $S$ under the action of $\sigma$. Then, $S^\sigma$ is an irreducible factor of $R^\sigma$ and $R^\sigma=R$ since $R\in \Q[a,b]$. Note that $S$ and $S^\sigma$ are equal up to multiplication by a constant since otherwise, $S\cdot S^\sigma$ would divide $R$, and the linear part of $R$ at the origin would vanish.  In addition, the linear part of $S^\sigma$ is equal to the linear part of $R^\sigma = R$. Thus, $S^\sigma = S$. Since this holds for all $\sigma \in \Gal$, the coefficients of $S$ are rational. 

Since $S\in \Q[a,b]$ is a factor of $R$ and since $R$ is irreducible over $\Q$, we have that $S=R$. This completes the proof since $S$ is irreducible over $\C$ by assumption. 
\end{proof}

To apply this lemma, we need to study the behavior of $R_k$ at the origin. 

\begin{lemma}\label{lem:near0}
The homogeneous part of least degree of $R_k$ is $3(a+b)$. 
\end{lemma}

\begin{proof}
An elementary induction on $j\geq 1$ shows that the homogeneous part of least degree of $P_j$ is $b$. As a consequence, the homogeneous part of least degree of $Q_k$ is $3b^2-3a^2$. Factoring out $b-a$ to get $R_k$ yields the required result. 
\end{proof}

Thus, $R_k$ vanishes at the origin with nonzero linear part $(a,b)\mapsto 3(a+b)$. 
This completes the proof of the proposition. 
\end{proof}

\subsection{The family $z^3+b$, $b\in \C$\label{sec:zcubeplusc}}

We now study the intersection of $\Sk$ with the line 
$\L_2:=\{a=0\}\subset \C^2$. Note that the map $f_b:=F_{0,b}$ is a unicritical polynomial: 
\[f_b(z) = z^3+b.\]
For $j\geq 1$, define $p_j\in \Z[b]$ by 
\[p_j(b) := P_j(0,b)\quad\text{so that}\quad p_1 = b\and p_{j+1}= p_j^3+b.\] 
Let $q_k\in \Z[b]$ and $r_k\in \Z[b]$ be defined by 
\[q_k(b):=Q_k(0,b)\and r_k(b):=R_k(0,b),\]
so that 
\[q_k = p_{k-1}^2+p_{k-1}p_k+p_k^2 \and q_k = br_k.\]
An easy induction on $j\geq 1$ shows that $p_j$ is a monic polynomial of degree $3^{j-1}$ with least degree term $b$. 
It follows that $q_k$ is a monic polynomial of degree $2\cdot 3^{k-1}$ with least degree term $3b^2$. Thus, $q_k=br_k=b^2 s_k$ where $s_k\in \Z[b]$ is a monic polynomial of degree $2\cdot 3^{k-1}-2$ with $s_k(0)=3$.  
The proof of the following result goes back to \cite{goksel} (see also \S\ref{sec:prefixedD}). 

\begin{proposition}
For $k\geq 2$,  the polynomial $s_k\in \Z[b]$ is irreducible over $\Q$.
\end{proposition}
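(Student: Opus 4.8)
The plan is to prove irreducibility of $s_k$ over $\Q$ by combining a $p$-adic (Newton polygon / Eisenstein-type) argument at a well-chosen prime with a dynamical observation about the roots. First, I would record the basic structure: $q_k = p_{k-1}^2 + p_{k-1}p_k + p_k^2$ where $p_j(b) = f_b^{\circ j}(0)$ for $f_b(z) = z^3 + b$, and $q_k = b^2 s_k$ with $s_k$ monic of degree $2\cdot 3^{k-1} - 2$ and $s_k(0) = 3$. The roots of $q_k$ are exactly the parameters $b$ for which the critical value orbit $0 \mapsto b \mapsto \cdots$ satisfies $f_b^{\circ(k-1)}(0)$ and $f_b^{\circ k}(0)$ have the same image; discarding the roots $b=0$ (where $0$ is fixed, accounted for with multiplicity two by the $b^2$ factor), the roots of $s_k$ are the $b$ for which $0$ is strictly preperiodic of preperiod exactly $k$ to a fixed point of $f_b$, together possibly with the ``wrong-branch'' parameters where $f_b^{\circ(k-1)}(0) = -f_b^{\circ k}(0) \neq 0$ maps into a fixed point. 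The key point from \cite{goksel} that I would reconstruct is that at the prime $3$ the polynomial $s_k$ has a very constrained Newton polygon.

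The main technical step: reduce everything mod $3$ and analyze the $3$-adic Newton polygon of $s_k(b)$. Over $\F_3$ one has $f_b^{\circ j}(0) \equiv$ (a $3$-power Frobenius twist of $b$), so $p_j(b) \equiv b^{3^{j-1}} \pmod 3$ in a precise sense, forcing $q_k(b) = p_{k-1}^2 + p_{k-1}p_k + p_k^2 \equiv (p_k - p_{k-1})^2 \equiv (\text{power of } b)^{\,\text{something}} \pmod 3$ — i.e. $s_k(b) \equiv b^{\,2\cdot 3^{k-1} - 2} \pmod 3$ up to unit, so $3 \mid s_k(b) - (\text{leading behavior})$ at every intermediate coefficient. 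Combined with $s_k(0) = 3$ (so the constant term has $3$-adic valuation exactly $1$) and $s_k$ monic, the Newton polygon of $s_k$ at $3$ is a single segment from $(0,1)$ to $(2\cdot 3^{k-1}-2, 0)$ of slope $-1/(2\cdot 3^{k-1}-2)$. Since $\gcd(1, 2\cdot 3^{k-1}-2) = 1$, this slope has denominator equal to $\deg s_k$, which forces $s_k$ to be irreducible over $\Q_3$, hence over $\Q$. I would carry out: (1) the congruence $p_j \equiv b^{3^{j-1}} \pmod 3$, actually more carefully tracking lower terms to pin down valuations of all coefficients of $q_k$; (2) the deduction that all non-leading, non-constant coefficients of $s_k$ are divisible by $3$ while $v_3(s_k(0)) = 1$; (3) the Newton polygon conclusion.

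The hard part will be step (1)–(2): merely knowing $p_j \equiv b^{3^{j-1}}$ mod $3$ is not enough, since I need every coefficient of $s_k$ strictly between the top and the constant term to be divisible by $3$, which requires controlling $q_k \bmod 9$ on the relevant low-order terms — or, more cleanly, showing directly that $q_k/b^2 \equiv b^{2\cdot 3^{k-1}-2} \pmod 3$ as polynomials in $\F_3[b]$, which amounts to proving $p_{k-1}^2 + p_{k-1}p_k + p_k^2$ has, mod $3$, no monomials other than the top one (after dividing by $b^2$). Using $x^2 + xy + y^2 = (x-y)^2 + 3xy \equiv (x-y)^2 \pmod 3$ reduces this to understanding $p_k - p_{k-1} \bmod 3$; and $p_k - p_{k-1} = f_b^{\circ(k-1)}(b) - f_b^{\circ(k-1)}(0)$, which telescopes. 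Over $\F_3$, $f_b(z) = z^3 + b$ is additive in the sense $f_b(z) - f_b(w) = (z-w)^3$, so inductively $p_k - p_{k-1} \equiv (p_{k-1} - p_{k-2})^3 \equiv \cdots \equiv (p_1 - p_0)^{3^{k-1}} = b^{3^{k-1}} \pmod 3$. Hence $q_k \equiv b^{2\cdot 3^{k-1}} \pmod 3$, so $s_k \equiv b^{2\cdot 3^{k-1}-2} \pmod 3$, exactly as needed; combined with $s_k(0) = 3$ this gives the single-slope Newton polygon and irreducibility. The only remaining care is checking $\gcd(1,\, 2\cdot 3^{k-1}-2) = 1$ (immediate) so that the Newton polygon argument yields irreducibility rather than just a lower bound on factor degrees.
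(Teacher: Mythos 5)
Your proposal is correct and follows essentially the same route as the paper: reduce mod $3$ to show $p_k-p_{k-1}\equiv b^{3^{k-1}}$, deduce $s_k\equiv b^{2\cdot 3^{k-1}-2}\pmod 3$, and combine with $s_k(0)=3$ to conclude. The paper phrases the final step as the Eisenstein criterion while you phrase it as a one-segment $3$-adic Newton polygon, but these are the same argument, and your initial worry about needing control mod $9$ on intermediate coefficients is resolved exactly as you note (the congruence $s_k\equiv b^{\deg s_k}\pmod 3$ in $\F_3[b]$ already gives divisibility of all non-leading coefficients).
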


\begin{proof}
Working in ${\mathbb F}_3[b]$, we have that $(x+y)^3 \equiv x^3+y^3\mod 3$. An elementary induction on $j\geq 1$ yields 
\[p_j \equiv b^{3^{j-1}} +b^{3^{j-2}}+  \cdots + b^3+  b \mod{3}. \]
% \cdots +b^{3^{n-2}}+ b^{3^{n-1}}\mod{3}. \]
It follows that 
\[p_{k}-p_{k-1} \equiv b^{3^{k-1}}\mod{3}\and (p_k-p_{k-1}) q_k  = (p_k-p_{k-1})^3 \equiv b^{3^k}\mod 3.\]
Thus, 
\[q_k \equiv b^{2\cdot 3^{k-1}}\mod 3\and s_k \equiv b^{2\cdot 3^{k-1}-2}\mod 3.\]
Since $s_k(0) = 3$ is not a multiple of $9$, the Eisenstein criterion implies that $s_k$ is irreducible over $\Q$. 
\end{proof}

\subsection{Behavior near infinity}

We now study the behavior of $R_k$ when $a$ or $b$ is large. 

\begin{lemma}\label{lem:intersectinfinity}
The homogeneous part of greatest degree of $R_k$ is 
\[(b-a)^{4\cdot 3^{k-2}-1}\cdot (2a+b)^{2\cdot 3^{k-2}}.\]
\end{lemma}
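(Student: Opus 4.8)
The natural approach is to track the top-degree (principal) parts of all the polynomials involved through the recursion defining $P_j$, then divide by the top part of $b-a$. Write $\widehat g$ for the homogeneous part of highest degree of a polynomial $g\in\Z[a,b]$. First I would show by induction on $j\ge 1$ that $\widehat{P_j} = (b-a)^{(3^{j-1}-1)/2}(2a+b)^{(3^{j-1}+1)/2}$, or more precisely that $P_j$ has degree $3^{j-1}$ and its top part factors this way. The base case $P_1=b$ is immediate (here the exponent $(3^0-1)/2=0$), and for the inductive step one uses $P_{j+1}=P_j^3-3a^2P_j+2a^3+b$, so that for $j\ge 1$ the dominant term is $\widehat{P_{j+1}}=\widehat{P_j}^{\,3}$ since $\deg P_j^3 = 3^j$ strictly exceeds the degrees of the other summands. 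Cubing the claimed factorization multiplies both exponents by $3$, which matches $(3^j-1)/2$ and $(3^j+1)/2$, closing the induction.

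Next I would compute $\widehat{Q_k}$ where $Q_k = H(P_{k-1},P_k)=P_{k-1}^2+P_{k-1}P_k+P_k^2-3a^2$. Since $\deg P_{k-1}=3^{k-2} < 3^{k-1}=\deg P_k$, the three monomial-like products have degrees $2\cdot 3^{k-2}$, $3^{k-2}+3^{k-1}=4\cdot 3^{k-2}$, and $2\cdot 3^{k-1}=6\cdot 3^{k-2}$ respectively, so the unique dominant term is $P_k^2$ and $\widehat{Q_k}=\widehat{P_k}^{\,2}$. Using the formula for $\widehat{P_k}$ with $j=k$: the exponent of $(b-a)$ becomes $2\cdot(3^{k-1}-1)/2 = 3^{k-1}-1 = 3\cdot 3^{k-2}-1$ and the exponent of $(2a+b)$ becomes $2\cdot(3^{k-1}+1)/2 = 3^{k-1}+1 = 3\cdot 3^{k-2}+1$. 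Hmm — this does not yet match the claimed answer, which has exponents $4\cdot 3^{k-2}-1$ and $2\cdot 3^{k-2}$; so the naive guess for $\widehat{P_j}$ must be wrong, and the real content is that the $(2a+b)$ and $(b-a)$ factorization of $\widehat{P_j}$ is more subtle. This is the step I expect to be the main obstacle: one must correctly identify the top-degree factorization of $P_j$, presumably by a finer induction showing $\widehat{P_j}=(b-a)^{\alpha_j}(2a+b)^{\beta_j}$ with $\alpha_j+\beta_j=3^{j-1}$ and then solving the recursions $\alpha_{j+1}=3\alpha_j$, $\beta_{j+1}=3\beta_j$ — which would force $\alpha_j=0$ for all $j$, contradicting the target. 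The resolution must be that $\widehat{P_j}$ is \emph{not} simply $\widehat{P_{j-1}}^{\,3}$ for small $j$, or that $b-a$ and $2a+b$ enter only after the division; so the correct route is to factor $Q_k$ directly using the known divisibility $Q_k=(b-a)R_k$ together with an independent computation of the $(2a+b)$-adic and $(b-a)$-adic orders of $Q_k$.

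Concretely, then, the plan I would actually execute is: (1) establish that the top-degree part of $Q_k$ is a product of powers of the two linear forms $b-a$ and $2a+b$ (these being exactly the linear forms that appear, since $F_{a,b}$ and its critical orbit are governed by $\pm a$ and the shape $z^3-3a^2z+2a^3$, whose behavior at infinity degenerates precisely along $\{b=a\}$ and $\{b=-2a\}$); (2) determine the exponent of $b-a$ by localizing: substitute $b=a+t$ and find the lowest power of the "large" direction, or equivalently count how $Q_k$ vanishes to high order along the line at infinity in the $(b-a)$ direction — this should give order $4\cdot 3^{k-2}$ in $Q_k$, hence $4\cdot 3^{k-2}-1$ in $R_k$ after removing one factor of $b-a$; (3) determine the exponent of $2a+b$ similarly (this factor is coprime to $b-a$, so it passes untouched to $R_k$), getting $2\cdot 3^{k-2}$; (4) check the total degree: $(4\cdot 3^{k-2}-1)+2\cdot 3^{k-2} = 6\cdot 3^{k-2}-1 = 2\cdot 3^{k-1}-1 = \deg R_k$, which confirms that no other factors appear and $\widehat{R_k}$ is exactly this product. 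The degree bookkeeping in step (4) is the sanity check that pins everything down; the genuine work is in steps (2) and (3), most cleanly done by an induction that tracks the $(b-a)$-order and $(2a+b)$-order of $\widehat{P_j}$ simultaneously through the cubing recursion, being careful that for $j=1$ the polynomial $P_1=b$ already is divisible by neither linear form, so the orders are built up entirely by the $+b$, $-3a^2P_j$, $+2a^3$ corrections interacting with the leading $P_j^3$ — which is exactly why the exponents are not simply geometric in $3$.
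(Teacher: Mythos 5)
Your overall strategy --- track the homogeneous part of highest degree through the recursion $P_{j+1}=P_j^3-3a^2P_j+2a^3+b$ and then strip one factor of $b-a$ --- is exactly the paper's, and your diagnosis of the obstacle is also correct: the relation $\widehat{P_{j+1}}=\widehat{P_j}^{\,3}$ fails at $j=1$, because $P_1=b$ has degree $1$ and all three terms $P_1^3$, $-3a^2P_1$, $2a^3$ have degree $3$. But you stop precisely at the point where the proof actually happens. The one-line computation you are missing is
\[P_2 \;=\; b^3-3a^2b+2a^3+b \;=\; (b-a)^2(2a+b)+b,\]
so the top part of $P_2$ is $H_2:=(b-a)^2(2a+b)$, and from $j=2$ onward the recursion \emph{is} clean (for $j\geq 2$ one has $3^j>3^{j-1}+2$), giving $\widehat{P_j}=H_2^{\,3^{j-2}}=(b-a)^{2\cdot 3^{j-2}}(2a+b)^{3^{j-2}}$ for all $j\geq 2$. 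Combined with your (correct) observation that $P_k^2$ dominates in $Q_k=P_{k-1}^2+P_{k-1}P_k+P_k^2-3a^2$, this yields $\widehat{Q_k}=\widehat{P_k}^{\,2}=(b-a)^{4\cdot 3^{k-2}}(2a+b)^{2\cdot 3^{k-2}}$, and dividing by $b-a$ gives the lemma.

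As written, your proposal does not prove the statement: the ``plan'' in your final paragraph leaves the two key exponent computations as assertions (``this should give order $4\cdot 3^{k-2}$''), and your closing remark that the exponents ``are not simply geometric in $3$'' is misleading --- they are exactly geometric in $3$, only seeded at $j=2$ with base $(b-a)^2(2a+b)$ rather than at $j=1$ with base $b$. The whole subtlety is the single factorization of $P_2$ displayed above; once you have it, no $(b-a)$-adic or $(2a+b)$-adic localization is needed.
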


\begin{proof}
We first determine the homogeneous part $H_k$ of greatest degree of $P_j$ for $j\geq 2$. 
Since \[P_2 = b^3-3a^2b+2a^3+b = (b-a)^2(2a+b)+b\and P_{j+1} = P_j^3-3a^3P_j + 2a^3+b,\]
we have $H_2 = (b-a)^2(2a+b)$ and an elementary induction on $j\geq 2$ yields that $H_j = (H_2)^{3^{j-2}}$.
It follows that the homogeneous part of greatest degree of $Q_k= P_{k-1}^2 + P_{k-1}P_k+P_k^2 -3a^2$ is 
$ (H_2)^{2\cdot 3^{k-2}} = (b-a)^{4\cdot 3^{k-2}}\cdot (2a+b)^{2\cdot 3^{k-2}}$. Factoring out $b-a$ to get $R_k$ yields the required result.
\end{proof}

Let us embed $\C^2$ in $\C\P^2$ in the usual way, sending $(a,b)$ to $[a:b:1]$.

\begin{corollary}\label{coro:intersectinfinity}
The closure of $\Sik$ in $\C\P^2$ intersects the line at infinity at only two points: $[1:1:0]$ with multiplicity $4\cdot 3^{k-2}-1$, and $[1:-2:0]$ with multiplicity $2\cdot 3^{k-2}$. 
\end{corollary}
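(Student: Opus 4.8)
The statement to prove is Corollary~\ref{coro:intersectinfinity}, which identifies the points at infinity on the closure of $\Sik$.

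\medskip

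The plan is to homogenize $R_k$ and restrict to the line at infinity. Concretely, write $d := 2\cdot 3^{k-1}-1$ for the degree of $R_k$, and let $\widetilde R_k(a,b,c) := c^d R_k(a/c, b/c)$ be the homogenization, so that the closure of $\Sik$ in $\C\P^2$ is the zero locus $\{\widetilde R_k = 0\}$. The intersection with the line at infinity $\{c = 0\}$ is governed precisely by $\widetilde R_k(a,b,0)$, which is by definition the homogeneous part of greatest degree of $R_k$. By Lemma~\ref{lem:intersectinfinity}, this equals $(b-a)^{4\cdot 3^{k-2}-1}(2a+b)^{2\cdot 3^{k-2}}$. (One should check the exponents sum to $d$: indeed $4\cdot 3^{k-2}-1 + 2\cdot 3^{k-2} = 6\cdot 3^{k-2}-1 = 2\cdot 3^{k-1}-1 = d$, confirming this is genuinely the top-degree part and does not vanish identically.)

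\medskip

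The key step is then to read off, from the factorization $(b-a)^{4\cdot 3^{k-2}-1}(2a+b)^{2\cdot 3^{k-2}}$, which points $[a:b:0]$ on the line at infinity are roots and with what multiplicities. A point $[a:b:0]$ lies on the curve iff this binary form vanishes at $(a,b)$. The factor $b - a$ vanishes exactly at $[1:1:0]$, and the factor $2a + b$ vanishes exactly at $[1:-2:0]$; these two points are distinct (as $1 \neq -2$). Hence the closure meets the line at infinity in exactly these two points. The intersection multiplicity at $[1:1:0]$ is the multiplicity of $(b-a)$ as a factor of the top-degree form, namely $4\cdot 3^{k-2}-1$, and the intersection multiplicity at $[1:-2:0]$ is $2\cdot 3^{k-2}$. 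For $k \geq 2$ both exponents are at least $1$ ($4\cdot 3^{k-2}-1 \geq 3$ and $2\cdot 3^{k-2}\geq 2$), so both points genuinely occur.

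\medskip

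I expect no serious obstacle here: the only point requiring care is the standard fact that, for a plane curve $\{\widetilde R = 0\}$ of degree $d$, the intersection with the line $\{c=0\}$ — which is not a component of the curve, since the top-degree form is not identically zero — consists of the roots of the restriction $\widetilde R(a,b,0)$ counted with their multiplicities as roots of that binary form, and these multiplicities agree with the projective intersection multiplicities (B\'ezout's theorem in this degenerate-looking but harmless case). Given Lemma~\ref{lem:intersectinfinity}, the corollary is then immediate by inspecting the two linear factors and their exponents.
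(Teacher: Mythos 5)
Your argument is correct and is exactly the intended derivation: the paper states the corollary as an immediate consequence of Lemma~\ref{lem:intersectinfinity}, since the intersection of the projective closure with the line at infinity is read off from the top-degree homogeneous part, whose two linear factors $b-a$ and $2a+b$ give the points $[1:1:0]$ and $[1:-2:0]$ with the stated multiplicities. Your check that the exponents sum to $\deg R_k = 2\cdot 3^{k-1}-1$ is a sensible sanity check but otherwise the proof matches the paper's (implicit) one.
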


\subsection{Irreducibility over $\Q$\label{sec:irreducibleQ}}

We may now complete the proof of Proposition \ref{prop:irredC}. 

\begin{proposition}
For $k\geq 2$, the polynomial $R_k\in \Z[a,b]$ is irreducible over $\Q$. 
\end{proposition}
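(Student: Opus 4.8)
The plan is to show that any factorization $R_k = A \cdot B$ over $\Q$ must be trivial, by combining the information we have accumulated about $R_k$ at the origin, along the line $\L_2 = \{a=0\}$, and at infinity. Recall that $R_k$ has degree $2\cdot 3^{k-1}-1$; its lowest-degree homogeneous part is $3(a+b)$ (Lemma \ref{lem:near0}); its highest-degree homogeneous part is $(b-a)^{4\cdot 3^{k-2}-1}(2a+b)^{2\cdot 3^{k-2}}$ (Lemma \ref{lem:intersectinfinity}); and $R_k(0,b) = r_k(b) = b\,s_k(b)$ with $s_k$ irreducible over $\Q$ of degree $2\cdot 3^{k-1}-2$ (the proposition in \S\ref{sec:zcubeplusc}).

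Suppose $R_k = A\cdot B$ with $A, B \in \Q[a,b]$ nonconstant. First I would normalize: since the lowest-degree part $3(a+b)$ of $R_k$ is the product of the lowest-degree parts of $A$ and $B$, exactly one factor — say $A$ — vanishes at the origin, with lowest-degree part a nonzero multiple of $a+b$, while $B(0,0) \neq 0$. Rescaling, assume $B(0,0)=1$. Now restrict to the line $\{a=0\}$: we get $r_k(b) = b\,s_k(b) = A(0,b)\cdot B(0,b)$. Since $A(0,0)=0$ we have $b \mid A(0,b)$, and since $B(0,0)=1$ we have $b \nmid B(0,b)$; thus $A(0,b) = b\cdot(\text{something})$ and $B(0,b)$ divides $s_k(b)$. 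But $s_k$ is irreducible over $\Q$, so either $B(0,b)$ is a constant (necessarily $1$), or $B(0,b) = c\cdot s_k(b)$ and then $A(0,b) = b\cdot s_k(b)/c \cdot c= $ (up to constant) $b$. I would pursue both branches and derive contradictions using the behavior at infinity.

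The key leverage at infinity is Corollary \ref{coro:intersectinfinity}: the homogeneous top-degree part factors as $(b-a)^{4\cdot 3^{k-2}-1}(2a+b)^{2\cdot 3^{k-2}}$, so $A$ and $B$ each have top-degree homogeneous part of the form $(\text{const})\cdot(b-a)^{i}(2a+b)^{j}$. Combining the degree on the line $\{a=0\}$ with the total degree: if $B(0,b)$ is constant then $\deg_b B(0,b) = 0$, which forces the top-degree part of $B$, evaluated at $a=0$, to vanish — i.e. $(b)^{i+j}$ coefficient issue; more precisely, writing $B = B_d + (\text{lower})$ with $B_d = \lambda(b-a)^{i}(2a+b)^{j}$, we'd get $B_d(0,b) = \lambda b^{i+j}$, so $\deg B = \deg B_d = i+j$ and $B(0,b)$ has degree $i+j$ too unless leading terms cancel against lower homogeneous parts — but on a line the leading term of a polynomial restricted to the line is the restriction of its leading form provided that restriction is nonzero, and $\lambda b^{i+j} \neq 0$. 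Hence $\deg B = \deg B(0,b) = 0$, so $B$ is constant — contradiction. In the other branch, $A(0,b)$ is a constant times $b$, so by the same reasoning $\deg A = 1$, meaning $A$ is a linear polynomial; since its lowest and highest homogeneous parts both have degree $1$ and are multiples of $a+b$ and of $(b-a)^{i}(2a+b)^{j}$ with $i+j=1$ respectively, $A$ would be simultaneously proportional to $a+b$ and to $b-a$ or to $2a+b$ — impossible unless $A$ is a nonzero multiple of $a+b$ equal also to one of those, which it is not. This eliminates all cases, so $R_k$ is irreducible over $\Q$.

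The main obstacle I anticipate is the subtlety in the branch analysis where $\deg A = 1$: one must carefully check that a genuine linear factor $a+b$ (or the relevant candidate) cannot divide $R_k$, which means verifying that $R_k$ does not vanish identically on the line $\{a+b=0\}$ — equivalently that $R_k(a,-a)$ is not the zero polynomial in $a$. This requires a short separate computation (or an appeal to the structure of $R_k$, e.g. evaluating at a convenient point), and similar care is needed to rule out $b-a$ and $2a+b$ as factors, noting $b-a$ was already divided out once and $2a+b$ corresponds to $\L_1 = \{b=-2a\} = \Sk[1]$ which is not contained in $\Sik[k]$ except at isolated points. Making the "leading term of a restriction equals restriction of leading form" step fully rigorous — i.e. ensuring no cancellation when we restrict the homogeneous decompositions to $\{a=0\}$ — is the other point that needs to be handled with precision, but it follows from the explicit nonvanishing of $\lambda b^{i+j}$ and $3b$ (the lowest form restricted to $a=0$).
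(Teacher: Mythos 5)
Your argument is correct and follows essentially the same route as the paper: restrict to the line $\{a=0\}$, use the irreducibility of $s_k$ to force one factor to be (essentially) linear, and then derive a contradiction by comparing the lowest homogeneous part $3(a+b)$ with the top homogeneous part $(b-a)^{4\cdot 3^{k-2}-1}(2a+b)^{2\cdot 3^{k-2}}$. The only cosmetic difference is that you extract the degree equality $\deg B=\deg B(0,b)$ from the top form rather than from the degree count on the slice, and your top-form analysis already rules out the linear factor, so the separate verification you worry about at the end is not needed.
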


\begin{proof}
Assume by contradiction that $R_k = T_1\cdot T_2$ with $T_1\in \Z[a,b]$, $T_2\in \Z[a,b]$, ${\rm degree}(T_1)<{\rm degree}(R_k)$, and ${\rm degree}(T_2)<{\rm degree}(R_k)$. 

We first prove that either $T_1$ or $T_2$ must have degree $1$. 
Let $t_1\in \Z[b]$ and $t_2\in \Z[b]$ be defined by  
\[t_1(b) := T_1(0,b)\and t_2(b):= T_2(0,b).\]
Then, $r_k = t_1\cdot t_2$ with ${\rm degree}(t_1)\leq {\rm degree}(T_1)<{\rm degree}(R_k) = {\rm degree}(r_k)$. Similarly, ${\rm degree}(t_2)< {\rm degree}(r_k)$. Since $r_k = bs_k$ with $r_k$ monic and $s_k$  irreducible over $\Q$, exchanging $T_1$ and $T_2$ if necessary, this implies that $t_1=±b$ and $t_2=±s_k$. Then, ${\rm degree}(T_2)\geq {\rm degree}(s_k) = {\rm degree}(R_k)-1$ and ${\rm degree}(T_1)=1$. 

According to Lemma \ref{lem:near0}, the homogeneous part of least degree of $R_k$ is $3(a+b)$. Thus, $T_1$ divides $3(a+b)$; in fact, since $t_1=±b$, we have that $T_1=±(a+b)$.  So, the closure of $\Sik$ in $\C\P^2$ intersects the line at infinity at the point $[1:-1:0]$. This contradicts Corollary \ref{coro:intersectinfinity}. 
\end{proof}

\section{Quadratic rational maps\label{sec:quadrat}}

To prove Theorem \ref{theo:mainquad}, it is convenient to work in a space of dynamically marked quadratic rational maps. 
A quadratic rational map whose conjugacy class belongs to ${\mathcal V}_{k,1}$ with $k\geq 2$ has a critical point $\omega$ whose orbit contains a fixed point $\alpha$. There is a fixed point $\beta\neq \alpha$ since otherwise, $\alpha$ would be a triple fixed point and its parabolic basin would contain both critical orbits. Note that $\beta\neq \omega$ since $\omega$ is not fixed. The conjugacy class may therefore be represented by a rational map $f$ such that 
\[\alpha=0,\quad \beta=\infty\and  \omega = 1.\]
The critical value $a=f(1)$ belongs to $\C\ssm\{0\}$ and $f^{-1}(0) = \{0,b\}$ with $b\in \C\ssm \{1\}$. 
So, the rational map is 
\[G_{a,b}(z) := \frac{az(b-z)}{1+(b-2)z}\with (a,b)\in \Lambda:=\bigl(\C\ssm\{0\}\bigr)\times \bigl(\C\ssm\{1\}\bigr).\]
In addition, $(a,b)$ belongs to the curve
\[\Ck:=\bigl\{(a,b)\in  \Lambda~;~G_{a,b}^{\circ (k-2)}(a) = b\bigr\}.\]
Conversely, if $(a,b)$ belongs to the curve $\Ck$, then the conjugacy class of $G_{a,b}$ belongs to ${\mathcal V}_{k,1}$. So, in order to prove Theorem \ref{theo:mainquad}, it is enough to prove that the curve $\Ck$ is irreducible. 

\begin{remark}
A generic conjugacy class in ${\mathcal V}_{k,1}$ has two representatives in $\Ck$ corresponding to the choice of the marked fixed point $\beta$. It follows that the quotient map $\Ck\to {\mathcal V}_{k,1}$ has degree $2$. 
\end{remark}

\subsection{An equation for $\Ck$\label{sec:equation}}

Here, we shall define a polynomial $R_k\in \Z[a,b]$ vanishing on $\Ck$. This polynomial shall not be confused with the polynomial $R_k$ defined in \S\ref{sec:cubic}. However, since they play parallel roles, we keep the same notation. 
Let us first observe that for $j\geq 2$, 
\[G_{a,b}^{\circ (j-2)}(a) = \frac{P_j(a,b)}{Q_j(a,b)}\]
where $P_j\in \Z[a,b]$ and $Q_j\in \Z[a,b]$ are defined recursively by 
\[P_2 = a,\quad Q_2 = 1,\quad P_{j+1} = aP_j\cdot (bQ_j-P_j) \and Q_{j+1} = Q_j^2+(b-2)P_jQ_j.\]
So, $\Ck$ is the set of parameters $(a,b)\in \Lambda$ such that 
\[R_k(a,b)=0\with R_k := P_k-bQ_k\in \Z[a,b].\]
This shows that $\Ck$ is an algebraic subset of $\Lambda$ and that Theorem \ref{theo:mainquad} follows from the following 
result.
 
\begin{proposition}\label{prop:irreducquad}
For $k\geq 2$, the polynomial $R_k\in \Z[a,b]$ is irreducible over $\C$. 
\end{proposition}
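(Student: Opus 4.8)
The plan is to mimic the three-part strategy that worked for cubic polynomials in \S\ref{sec:cubic}: reduce irreducibility over $\C$ to irreducibility over $\Q$ via a local criterion at a convenient point, establish irreducibility over $\Q$ along a well-chosen line using Goksel's result (the polynomial $s_k$ for the family $z^2+b$), and rule out spurious factorizations by examining the behavior at infinity. Concretely, first I would set $a=$ (something) to specialize $G_{a,b}$ to the quadratic family $z\mapsto z^2+b$ up to affine conjugacy — the natural candidate is to note that when the marked critical orbit has the right combinatorics the restriction of $R_k$ to that line factors as a power of a coordinate times a polynomial $s_k\in\Z[b]$ which, by the quoted theorem of Goksel (the case $D=2$), is irreducible over $\Q$. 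So I would define $t_i(b):=T_i(a_0,b)$ for a hypothetical factorization $R_k=T_1T_2$ over $\Z[a,b]$, read off from the factorization $r_k(b)=b^{m}s_k(b)$ (or similar) that one factor must restrict to $\pm b^m$ and hence — comparing degrees — that one of the $T_i$ has very small degree, ideally degree $1$ in $a$ and $b$.

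Second, I would compute the homogeneous part of $R_k$ of least degree (expanding $P_j$, $Q_j$ near the origin, or near whichever base point is dynamically meaningful for $\Ck$), analogous to Lemma \ref{lem:near0}, to pin down the nonzero linear part; then an analogue of Lemma \ref{lem:criterion} (irreducibility over $\C$ $\Leftrightarrow$ over $\Q$ for polynomials with nonzero linear part at a rational point, proved by the Galois-averaging argument already given) completes the passage from $\Q$ to $\C$. The Galois argument is purely formal once the linear-part hypothesis is in place, so I would simply invoke it.

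Third — and this is where the real work lies — I would compute the homogeneous part of $R_k$ of greatest degree, i.e. determine which points at infinity lie on the closure of $\Ck$ in $\C\P^2$ and with what multiplicities, exactly as in Lemma \ref{lem:intersectinfinity} and Corollary \ref{coro:intersectinfinity}. The recursions $P_{j+1}=aP_j(bQ_j-P_j)$, $Q_{j+1}=Q_j^2+(b-2)P_jQ_j$ are not as clean as in the cubic case because the degrees of $P_j$ and $Q_j$ grow and their leading forms interact; I expect that the top-degree form of $R_k=P_k-bQ_k$ will again be a product of a few explicit linear forms raised to powers (candidates being $b-1$, $a$, $b$, and perhaps $2a+b$-type combinations coming from the vanishing locus of $1+(b-2)z$), and the degree bookkeeping — keeping track of cancellations between the leading terms of $P_k$ and $bQ_k$ — is the delicate step. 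Once the list of points at infinity is known, a putative degree-one factor $T_1$ would force $\overline{\Ck}$ to pass through some point at infinity not on that list (or with wrong multiplicity), giving the contradiction.

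The main obstacle, then, is the computation of the homogeneous part of highest degree of $R_k$: unlike the cubic case where $P_{j+1}=P_j^3-3a^2P_j+2a^3+b$ immediately gives $H_{j+1}=H_j^3$, here one must verify by induction a precise closed form for the leading forms of both $P_j$ and $Q_j$, check that no cancellation occurs when forming $P_k-bQ_k$ (or identify and account for it if it does), and then factor the resulting binary form. A secondary, more routine, point to watch is that $\Ck$ is defined inside $\Lambda=(\C\sm\{0\})\times(\C\sm\{1\})$ rather than all of $\C^2$, so one should confirm that $R_k$ has no factor supported entirely on $\{a=0\}$ or $\{b=1\}$ — which again follows from the least-degree and greatest-degree computations once they are in hand.
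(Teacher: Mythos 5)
Your plan transposes the cubic strategy wholesale, but you have left the decisive computations undone and, as a result, have misidentified where the difficulty lies. The correct specialization is the line $b=2$: this is the unique line on which $G_{a,b}$ degenerates to a polynomial, namely $g_a(z)=az(2-z)$, and the restriction is $r_k(a):=R_k(a,2)=p_k(a)-2$, where $p_2=a$ and $p_{j+1}=-ap_j^2+2ap_j$. This $r_k$ is irreducible over $\Q$ directly by Eisenstein at the prime $2$ (modulo $2$ it is a power of $a$, and its constant coefficient is $-2$); no auxiliary factor of the form $b^m$ appears, contrary to the structure $r_k=b^m s_k$ you anticipate from the cubic case. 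Moreover, the point you dismiss as ``routine degree bookkeeping'' is in fact the key: the recursions give $\deg P_j\leq 2^{j-1}-1$ and $\deg Q_j\leq 2^{j-1}-2$, hence $\deg R_k\leq 2^{k-1}-1$, while $\deg r_k=2^{k-1}-1$ exactly, so $\deg R_k=\deg r_k$. Consequently, if $R_k=T_1T_2$ with both factors of degree strictly less than $\deg R_k$, the restrictions $t_i(a)=T_i(a,2)$ factor the irreducible polynomial $r_k$ into two polynomials each of degree strictly less than $\deg r_k$ --- an immediate contradiction. The entire third step of your plan (leading forms, points at infinity, multiplicities) is therefore unnecessary here: it was needed in the cubic case only because there the restriction $r_k=bs_k$ carried an extra linear factor, leaving open the possibility of a degree-one factor of $R_k$ that had to be excluded at infinity. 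Here the restriction has full degree and is itself irreducible, so no such loophole exists.

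The parts of your plan that do survive are the passage from $\Q$ to $\C$ (the homogeneous part of least degree of $R_k$ is $-b$, so Lemma \ref{lem:criterion} applies at the origin --- harmless even though the origin lies outside $\Lambda$, since the proposition concerns the polynomial $R_k$ and not the curve) and the general idea of specializing to a polynomial subfamily and invoking an Eisenstein-type argument in the spirit of Goksel. But as written the proposal is not a proof: the choice of line, the form of the restriction, and the degree of $R_k$ are all left open, and the step you flag as ``where the real work lies'' is not the one that needs to be carried out.
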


Note that $R_2 = a-b$ is irreducible over $\C$. For the remainder of \S\ref{sec:quadrat}, devoted to the proof of Proposition \ref{prop:irreducquad}, we assume that $k\geq 3$. 
%We shall use the following observation. 

\begin{figure}[h] %  figure placement: here, top, bottom, or page
   \centering
   \includegraphics[width=2.5in]{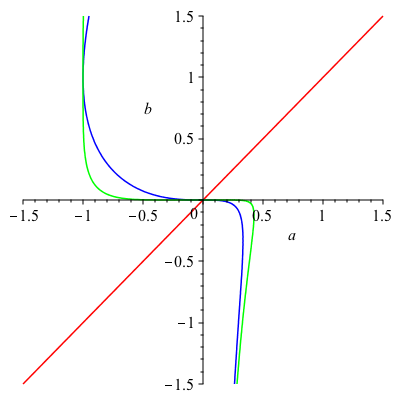} 
   \caption{Three curves drawn in $\mathbb R^2$: ${\cal V}_2$ is red, ${\cal V}_3$ is blue, and ${\cal V}_4$ is green.}
   \label{quadpic:color}
\end{figure}

%\begin{figure}[htbp] %  figure placement: here, top, bottom, or page
%   \centering
%   \includegraphics[width=2.5in]{quad_grey.jpg} 
%   \caption{Three curves drawn in $\mathbb R^2$: ${\cal V}_2$,  ${\cal V}_3$, and ${\cal V}_4$.}
%   \label{quadpic:grey}
%\end{figure}
%

\subsection{Behavior near the origin}

As in \S\ref{sec:irredQ}, we first prove that it is enough to show that $R_k$ is irreducible over $\Q$. And here also, we deduce this from Lemma \ref{lem:criterion}, studying the behavior of $R_k$ near the origin. There is however a fundamental difference between the two approaches, even if this does not appear in the proof. In the case of cubic polynomials, the origin corresponds to the cubic polynomial $z\mapsto z^3$ which belongs to the family we are studying, whereas here, the origin does not belong to our parameter space $\Lambda$. 

\begin{lemma}
For $k\geq 3$, the homogeneous part of least degree of $R_k$ is $-b$. 
\end{lemma}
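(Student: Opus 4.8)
The plan is to compute the homogeneous part of least degree of $R_k = P_k - bQ_k$ by tracking, for each $j\geq 2$, the homogeneous part of least degree of $P_j$ and of $Q_j$ separately, using the recursion
\[P_{j+1} = aP_j(bQ_j - P_j),\qquad Q_{j+1} = Q_j^2 + (b-2)P_jQ_j.\]
First I would record the base case: $P_2 = a$ has least-degree part $a$ (degree $1$), and $Q_2 = 1$ has least-degree part $1$ (degree $0$). Then $P_3 = aP_2(bQ_2 - P_2) = a\cdot a\cdot(b-a) = a^2(b-a)$, whose least-degree part is $a^2 b$ (degree $3$), while $Q_3 = Q_2^2 + (b-2)P_2 Q_2 = 1 + (b-2)a$ has least-degree part $1$ (the constant $1$ survives since $Q_2(0,0)=1$). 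So already one sees the pattern I expect to prove by induction on $j\geq 2$: the least-degree part of $Q_j$ is the constant $1$, and the least-degree part of $P_j$ is a monomial $c_j a^{?} b^{?}$ of degree $\geq 2$ for $j\geq 3$ — more precisely I would guess $P_j$ has least-degree part $a^{2^{j-2}} b$, of degree $2^{j-2}+1$, with $P_2$ the exceptional case of degree $1$.

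The inductive step is routine once the shape is guessed. Assuming $Q_j$ has constant term $1$ and $P_j$ has positive valuation (at least $1$, in fact at least $3$ for $j\geq 3$), the product $Q_j^2 + (b-2)P_jQ_j$ has constant term $1^2 = 1$: the term $(b-2)P_jQ_j$ contributes only in degrees $\geq \mathrm{val}(P_j)\geq 1$, and $-2P_jQ_j$ does not cancel the constant $1$ because $P_j$ has no constant term. Hence $Q_{j+1}$ again has least-degree part $1$. For $P_{j+1} = aP_j(bQ_j - P_j)$: the factor $bQ_j - P_j$ has least-degree part $b$ (since $Q_j$ has constant term $1$ and $P_j$ has valuation $\geq 1$, so $bQ_j$ contributes $b$ in degree $1$ and $P_j$ contributes only in degree $\geq 1$ — one must check $P_j$ has no linear part $b$ term for $j\geq 3$, i.e. that its valuation is $\geq 2$; this is where the degree-$3$ base case $P_3$ matters). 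Then the least-degree part of $P_{j+1}$ is $a\cdot(\text{l.d.p. of }P_j)\cdot b$, which multiplies valuation by the right amount and keeps it a single monomial. From this, for $k\geq 3$ the least-degree part of $P_k$ has degree $2^{k-2}+1 \geq 3$, so $P_k$ contributes nothing in degrees $0$ or $1$; meanwhile $bQ_k$ has least-degree part $b\cdot 1 = b$ in degree $1$. Therefore the homogeneous part of least degree of $R_k = P_k - bQ_k$ is $-b$.

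The main obstacle — really the only thing requiring care — is pinning down the valuation of $P_j$ precisely enough that the linear term of $bQ_j - P_j$ is genuinely just $b$ and not $b$ minus some linear part of $P_j$, and likewise that no low-degree cancellation occurs in $Q_{j+1}$. This is entirely handled by checking the two base cases $j=2$ (where $P_2$ has valuation $1$, the exception) and $j=3$ (where $P_3 = a^2(b-a)$ has valuation $3$), and then the induction shows $\mathrm{val}(P_j) = 2^{j-2}+1$ for all $j\geq 3$, which is $\geq 3$, comfortably above the degree-$1$ threshold. I would also double check that the excerpt's claim "$k\geq 3$" is what makes $P_k$ irrelevant at degree $\leq 1$; for $k=2$ one has $R_2 = a-b$ whose least-degree part is $a-b$, not $-b$, consistent with the lemma being stated only for $k\geq 3$. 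With these checks in place, the lemma follows, and then — exactly as in \S\ref{sec:irredQ} via Lemma \ref{lem:criterion}, since $-b$ is a nonzero linear part — irreducibility of $R_k$ over $\Q$ will imply irreducibility over $\C$.
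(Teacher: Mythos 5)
Your proposal is correct in substance and follows essentially the same route as the paper: an induction showing that $Q_j$ has constant term $1$ while $P_j$ has valuation at least $2$ for $j\geq 3$, so that the linear part of $R_k=P_k-bQ_k$ is exactly $-b$. Two of your intermediate claims are inaccurate, though harmlessly so: the least-degree homogeneous part of $P_j$ is not the single monomial $a^{2^{j-2}}b$ --- already $P_3=a^2(b-a)=a^2b-a^3$ is homogeneous of degree $3$, so its least-degree part is $a^2(b-a)$, not $a^2b$ --- and your valuation formula $2^{j-2}+1$ contradicts your own inductive step, which adds $2$ at each stage (multiplying by $a$ and by $b$) and therefore gives $\mathrm{val}(P_j)=2j-3$, not exponential growth. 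Neither slip affects the conclusion, since all the argument needs is $\mathrm{val}(P_k)\geq 2$ and $Q_k(0,0)=1$, both of which your induction does establish.
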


\begin{proof}
An elementary induction shows that for $j\geq 2$, the homogeneous part of least degree of $P_j$ is $a^{j-1}b^{j-2}$ and the homogeneous part of least degree of $Q_j$ is $1$. The result follows immediately. 
\end{proof}

 As a consequence $R_k\in \Z[a,b]$ vanishes at the origin with nonzero linear part. According to Lemma \ref{lem:criterion}, the polynomial $R_k$ is irreducible over $\C$ if and only if it is irreducible over $\Q$.
 
\subsection{The family $az(2-z)$, $a\in \C$.\label{eq:irreducibleQ}}

We now study the intersection of $\Ck$ with the line $\L:=\{b=2\}\subset \C^2$. Note that the map $g_a:=G_{a,2}$ is a quadratic polynomial: 
\[g_a(z) = az(2-z).\]
For $j\geq 2$, define $p_j\in \Z[a]$ and $q_j\in \Z[a]$  by 
\[p_j(a) := P_j(a,2),\quad q_j(a):=Q_j(a,2)\]
so that 
\[ p_2 = a,\quad p_{j+1} = -ap_j^2+2ap_j,\quad q_1 = 1\and q_{j+1} = q_j^2.\]
In particular, for $j\geq 3$, the polynomial $-p_j$ is monic with degree $2^{j-1}-1$, and its constant coefficient is $0$; and $q_j=1$. 
Let $r_k\in \Z[a]$ be defined by 
\[r_k(a):= R_k(a,2)\quad \text{so that }\quad r_k = p_k-2.\]
Then, $r_k$ has degree $2^{k-1}-1$ and its constant coefficient is $-2$. 

\begin{lemma}
The degree of $R_k$ is $2^{k-1}-1$. 
\end{lemma}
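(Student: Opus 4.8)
The plan is to sandwich the total degree of $R_k$ between matching bounds. The lower bound is free: passing from $R_k(a,b)$ to $R_k(a,2)=r_k(a)$ cannot raise the total degree, so $\deg R_k\geq \deg r_k = 2^{k-1}-1$, the value of $\deg r_k$ having just been recorded. All the content is therefore in the reverse inequality $\deg R_k\leq 2^{k-1}-1$.

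For the upper bound I would prove, by a simultaneous induction on $j\geq 2$, the two estimates $\deg P_j\leq 2^{j-1}-1$ and $\deg Q_j\leq 2^{j-1}-2$ (total degrees in $\Z[a,b]$). The base case is immediate from $P_2=a$ and $Q_2=1$. For the inductive step one uses the recursions $P_{j+1}=aP_j(bQ_j-P_j)$ and $Q_{j+1}=Q_j\bigl(Q_j+(b-2)P_j\bigr)$: the inductive hypothesis gives $\deg(bQ_j)\leq 2^{j-1}-1$ and $\deg P_j\leq 2^{j-1}-1$, hence $\deg(bQ_j-P_j)\leq 2^{j-1}-1$ and $\deg P_{j+1}\leq 1+(2^{j-1}-1)+(2^{j-1}-1)=2^j-1$; similarly $\deg\bigl(Q_j+(b-2)P_j\bigr)\leq \max(2^{j-1}-2,\,2^{j-1})=2^{j-1}$, hence $\deg Q_{j+1}\leq (2^{j-1}-2)+2^{j-1}=2^j-2$.

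Granting these, $R_k=P_k-bQ_k$ has total degree at most $\max\bigl(\deg P_k,\,1+\deg Q_k\bigr)=\max(2^{k-1}-1,\,2^{k-1}-1)=2^{k-1}-1$, and combined with the lower bound this gives $\deg R_k=2^{k-1}-1$.

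There is no real obstacle here; the one point deserving attention is to carry the \emph{sharp} bound $\deg Q_j\leq \deg P_j-1$ through the induction rather than a looser estimate, so that in each recursion the two competing terms ($bQ_j$ versus $P_j$, and $Q_j$ versus $(b-2)P_j$) have the same degree and no slack is introduced; a weaker inductive hypothesis would yield a degree bound strictly larger than $2^{k-1}-1$ and fail to match the lower bound.
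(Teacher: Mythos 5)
Your proof is correct and follows the same route as the paper: an upper bound $\deg P_j\leq 2^{j-1}-1$, $\deg Q_j\leq 2^{j-1}-2$ by induction on the recursions, combined with the lower bound $\deg R_k\geq \deg r_k=2^{k-1}-1$ from the specialization $b=2$. You merely write out the inductive step that the paper calls ``an elementary induction.''
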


\begin{proof}
An elementary induction shows that the degree of $P_k$ is at most $2^{k-1}-1$ and the degree of $Q_k$ is at most $2^{k-1}-2$. Consequently, the degree of $R_k$ is at most  $2^{k-1}-1$. 

Since the polynomial $p_k$ has degree $2^{k-1}-1$, the polynomial $r_k = p_k-2$ also has degree $2^{k-1}-1$. Thus, 
\[2^{k-1}-1 = {\rm degree}(r_k) \leq  {\rm degree}(R_k) \leq 2^{k-1} - 1\]
 and the result follows.
\end{proof}

The proof of the following result goes back to \cite{goksel} (see also \S\ref{sec:prefixedD}). 

\begin{proposition}
For all $k\geq 2$, the polynomial $r_k\in \Z[a]$ is irreducible over $\Q$. 
\end{proposition}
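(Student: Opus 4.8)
The plan is to mimic the Eisenstein-type argument already used for the cubic family $z^3+b$ and for $s_k$, now applied to $r_k(a)=p_k(a)-2\in\Z[a]$. First I would reduce everything modulo $2$. The recursion is $p_2=a$ and $p_{j+1}=-ap_j^2+2ap_j$, so modulo $2$ this collapses to $p_{j+1}\equiv a\,p_j^2\pmod 2$. An elementary induction then gives a closed form: writing $d_j$ for the degree, $d_2=1$ and $d_{j+1}=2d_j+1$, so $d_j=2^{j-1}-1$, and modulo $2$ one gets $p_j\equiv a^{2^{j-1}-1}=a^{d_j}\pmod 2$ (each step multiplies by $a$ and squares, and squaring an already-monomial-mod-$2$ expression keeps it a monomial). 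Hence $r_k=p_k-2\equiv a^{2^{k-1}-1}\pmod 2$, i.e. all coefficients of $r_k$ except the leading one are even.

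Next I would pin down the constant term. From the recursion, $p_j(0)=0$ for all $j\geq 2$ (indeed $p_2(0)=0$ and $p_{j+1}(0)=0$ whenever $p_j(0)=0$), so the constant term of $r_k=p_k-2$ is exactly $-2$. Since $-2$ is divisible by $2$ but not by $4$, and $-p_k$ is monic (so $r_k$ has leading coefficient $\pm1$, a unit), the Eisenstein criterion at the prime $2$ applies directly: $r_k$ is irreducible over $\Q$. Strictly speaking Eisenstein gives irreducibility over $\Q$ for a primitive polynomial, and here the content of $r_k$ is $1$ because the leading coefficient is a unit, so there is no subtlety.

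The one point requiring a little care — and the only real obstacle — is making the mod-$2$ induction airtight: one must check that $p_j\bmod 2$ really is a single monomial $a^{d_j}$ and not merely congruent to $a^{d_j}$ up to lower-order even junk (it is the former, since $-a p_j^2$ with $p_j\equiv a^{d_j}$ gives $-a^{2d_j+1}$ exactly, and the $2ap_j$ term vanishes mod $2$), and that this forces every non-leading coefficient of the integer polynomial $r_k$ to be even. Once this monomial reduction is in hand, together with $r_k(0)=-2$ and the unit leading coefficient, the proposition is immediate from Eisenstein. I expect the author's proof to be essentially this, phrased as: $p_j\equiv a^{2^{j-1}-1}\pmod 2$, hence $r_k\equiv a^{2^{k-1}-1}\pmod 2$ with constant term $-2$ not divisible by $4$, so Eisenstein at $2$ finishes it.
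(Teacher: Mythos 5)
Your proof is correct and is essentially the paper's own argument: reduce the recursion modulo $2$ to get $p_{j+1}\equiv ap_j^2$, conclude $r_k\equiv a^{2^{k-1}-1}\pmod 2$ with constant term $-2$, and apply Eisenstein at the prime $2$. (Your exponent $2^{k-1}-1$ is in fact the correct one, matching the stated degree of $r_k$; the paper's displayed $a^{2^{k-1}}$ is a typo.)
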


\begin{proof}
Working in ${\mathbb F}_2[a]$, we have that for $j\geq 2$, 
\[p_{j+1} \equiv a p_j^2\mod{2}\quad\text{so that}\quad r_k \equiv p_k \equiv a^{2^{k-1}}\mod{2}.\]
The constant coefficient of  $r_k$ is $-2$. 
It follows from the Eisenstein criterion that $r_k$ is irreducible over $\Q$. 
\end{proof}
%
%\begin{corollary}
%The intersection $\Gamma_n\cap \L$ is irreducible over $\Q$. 
%\end{corollary}
%
%Embed $\Lambda$ in $\C\P^2$ in the usual way, sending $(a,b)$ to $[a:b:1]$, and let 
%$\overline\Sigma_n$ and $\overline \L$ be the respective closures of $\Gamma_n$ and $\L$ in $\C\P^2$.
%
%\begin{proposition}
%The intersection $\overline\Sigma_n\cap \overline \L$ is contained in $\L$.
%\end{proposition}
%
%\begin{proof}
%Note that $\overline \L\ssm \L$ consists of two points : the points $[0:2:1]$ and $[1:0:0]$. 
%
%On the one hand, if $(a,b)\in \Lambda$ is close to $[0:2:1]$, i.e., $a$ is close to $0$ and $b$ is close to $2$, then $G_{a,b}^{\circ n}(a)$ is close to $g_{0,2}^{\circ n}(a) = 0\neq 2$. This shows that $[0:2:1]$ is not contained in $\overline\Sigma_n$. 
%
%On the other hand, if $(a,b)\in \Lambda$ is sufficiently close to $[1:0:0]$, then $|a|>10$, $|b/a|<1/10$ and $|b-2|/|a|<1/10$. Then, if $|z|\geq |a|$, we have that 
%\[\bigl|G_{a,b}(z)\bigr| \geq |z| \frac{1-|b/a|}{1/|a|^2+|b-2|/|a|}\geq \frac{1-1/10}{1/100+1/10}|z|= \frac{90}{11}|z|.\]
%Thus, the orbit of $a$ is attracted by $\infty$. So, the critical point $1$ is not preperiodic to the fixed point $0$ 
%and $(a,b)$ is not contained in $\Gamma_n$. This shows that $[1:0:0]$ is not contained in $\overline\Sigma_n$.
%\end{proof}

\subsection{Irreducibility over $\Q$}

We may now complete the proof of Proposition \ref{prop:irreducquad}.

\begin{proposition}
The polynomial $R_k\in \Z[a,b]$ is irreducible over $\Q$. 
\end{proposition}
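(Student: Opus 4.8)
The plan is to follow the same contradiction scheme as in the cubic case (Proposition \ref{prop:irredC}), but exploit a simplification peculiar to the quadratic setting: the restriction of $R_k$ to the line $\{b=2\}$ is \emph{itself} irreducible over $\Q$, so no analysis of the points at infinity will be needed. So suppose, for a contradiction, that $R_k = T_1\cdot T_2$ with $T_1, T_2\in\Q[a,b]$ both non-constant; since their total degrees add up to $\deg R_k = 2^{k-1}-1$, each $T_i$ has positive total degree strictly less than $\deg R_k$.

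First I would restrict to the line $\{b=2\}$, i.e. to the quadratic polynomial family $g_a(z)=az(2-z)$, and set $t_i(a):=T_i(a,2)\in\Q[a]$. The point to check is that $r_k=R_k(a,2)=p_k-2$ is not the zero polynomial --- indeed its constant coefficient is $-2$ --- so $b-2$ divides neither $T_1$ nor $T_2$; hence $t_1$ and $t_2$ are both nonzero and $r_k = t_1\cdot t_2$. Next comes a degree count: specializing $b=2$ cannot raise degree, so $\deg t_i\le\deg T_i$, and therefore
\[
\deg r_k \;=\; \deg t_1+\deg t_2 \;\le\; \deg T_1+\deg T_2 \;=\; \deg R_k \;=\; \deg r_k ,
\]
the last equality being the lemma that $\deg R_k = 2^{k-1}-1 = \deg r_k$. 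Thus equality holds throughout, so $\deg t_i = \deg T_i>0$ for $i=1,2$, and $r_k = t_1\cdot t_2$ is a factorization of $r_k$ into two non-constant polynomials of $\Q[a]$. This contradicts the irreducibility of $r_k$ over $\Q$ (which comes from the Eisenstein criterion at the prime $2$). Combined with the reduction from $\C$ to $\Q$ --- available by Lemma \ref{lem:criterion}, since the homogeneous part of least degree of $R_k$ is the nonzero linear form $-b$ --- this yields Proposition \ref{prop:irreducquad}, and hence Theorem \ref{theo:mainquad}.

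The only genuinely load-bearing input is the coincidence $\deg R_k = \deg r_k$: it is what forces a hypothetical factor of $R_k$ to specialize to a genuine non-constant factor of $r_k$. In the cubic case this coincidence is not enough --- there $R_k(0,b) = b\cdot s_k$ is visibly reducible, and one must additionally compute the intersection of $\overline{\Sigma_k}$ with the line at infinity (Corollary \ref{coro:intersectinfinity}) to kill the extra linear factor; here that is unnecessary. So the main obstacle was really absorbed into the earlier lemmas computing $\deg R_k$ and establishing the Eisenstein irreducibility of $r_k$; granting those, the argument above is immediate.
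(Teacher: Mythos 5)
Your argument is correct and is essentially the paper's own proof: specialize to the line $\{b=2\}$, use $\deg R_k=\deg r_k=2^{k-1}-1$ to see that a nontrivial factorization of $R_k$ would induce a factorization $r_k=t_1t_2$ with both factors of degree strictly less than $\deg r_k$ (hence both non-constant), contradicting the Eisenstein irreducibility of $r_k$. Your added remarks (checking $b-2\nmid R_k$, and the comparison with the cubic case where the line at infinity is needed) are accurate but do not change the route.
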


\begin{proof}
Assume by contradiction that $R_k = T_1\cdot T_2$ with $T_1\in \Z[a,b]$, $T_2\in \Z[a,b]$, ${\rm degree}(T_1)<{\rm degree}(R_k)$ and ${\rm degree}(T_2)<{\rm degree}(R_k)$. Consider the polynomials $t_1\in \Z[a]$ and $t_2\in \Z[a]$ defined by  
\[t_1(a) := T_1(a,2)\and t_2(a):=T_2(a,2).\]
Then, $r_k = t_1\cdot t_2$ with ${\rm degree}(t_1)\leq {\rm degree}(T_1)<{\rm degree}(R_k) = {\rm degree}(r_k)$. Similarly, ${\rm degree}(t_2)< {\rm degree}(r_k)$. This is not possible since $r_k$ is  irreducible over $\Q$. 
\end{proof}

\section{Unicritical polynomials\label{sec:unicritical}}

The previous discussion motivates a more systematic study of irreducibility over $\Q$ within families of unicritical polynomials. This section is devoted to such a study. It can be read independently of the rest of the article. 
Consider the polynomials $f_a:\C\to \C$ defined by 
\[f_a(z) = az^D+1,\quad a\in \C.\]
The polynomial $f_a$ is unicritical: it has a unique critical point at $z=0$. We are interested in parameters $a$ such that the critical point is preperiodic for $f_a$. Note that the preperiod $k$ cannot be equal to $1$.

For $n\geq 1$, let $P_n\in \Z[a]$ be the polynomial 
\[P_n(a):= f_a^{\circ n}(0).\]
Gleason observed that the discriminant of $P_n$ is $1\mod D$, and thus $P_n$ has simple roots. It follows that
\[P_n = \prod_{m|n} R_m\quad \text{with}\quad R_n := \prod_{m | n} P_m^{\mu(n/m)}\in \Z[a],\]
where $\mu$ is the Möbius function defined by $\mu(i) =(-1)^j$ if $i$ is the product of $j$ distinct primes with $j\geq 0$ and $\mu(i)=0$ otherwise. 
For example, 
\[R_1=P_1 = 1,\quad R_2 = P_2 = a+1\quad \text{and}\quad R_3 = P_3 = a(a+1)^D + 1.\]
It is conjectured that when $D=2$, the polynomials $R_n$ are irreducible over $\Q$ for all $k\geq 2$.
The following result shows that this is not true when  $D\equiv 1 \mod 6$.  

\begin{proposition}[\cite{buff}]
The polynomial $R_3$ is irreducible over $\Q$ if and only if $D$ is not congruent to $1$ modulo 6. When $D\equiv 1 \mod 6$, the polynomial $R_3$ has exactly two irreducible factors over $\Q$, one of which is $a^2+a+1$. 
\end{proposition}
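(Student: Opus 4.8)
The plan is to analyze the polynomial $R_3 = P_3 = a(a+1)^D + 1$ directly, separating behavior at the two "bad" primes $2$ and $3$, and then extracting the factor $a^2+a+1$ when it is forced by congruence conditions. First I would compute $R_3$ modulo small primes: since $R_3 = a(a+1)^D+1$, reducing mod $2$ and mod $3$ and, when $D \equiv 1 \pmod 6$, observing that the primitive cube roots of unity are roots. Indeed, if $\zeta$ is a primitive cube root of unity, then $\zeta + 1 = -\zeta^2$, so $R_3(\zeta) = \zeta \cdot (-\zeta^2)^D + 1 = (-1)^D \zeta^{1+2D} + 1$. When $D \equiv 1 \pmod 6$, $D$ is odd and $1 + 2D \equiv 3 \equiv 0 \pmod 3$, so $R_3(\zeta) = -\zeta^0 + 1 = 0$; hence $a^2+a+1 \mid R_3$ over $\Q$. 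Conversely, when $D \not\equiv 1 \pmod 6$ one checks $R_3(\zeta) \neq 0$, so at least this obstruction disappears, but one must still prove genuine irreducibility.

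The core step is an Eisenstein-type / reduction argument to prove irreducibility of $R_3$ (resp. of the cofactor $R_3/(a^2+a+1)$) over $\Q$. I would try to mimic the Gleason-style reductions used earlier in the paper: reduce mod $2$ to get (up to units) $a^{D+1}$ or a small power times an irreducible-looking factor, and reduce mod $3$ similarly, then combine the Newton-polygon information at the primes $2$ and $3$ to force irreducibility of the relevant factor. The substitution $a \mapsto a-1$ (centering at the root of $a+1$) may help: $R_3(a-1) = (a-1)a^D + 1 = a^{D+1} - a^D + 1$, which is visibly congruent to $a^{D+1} + a^D + 1 \pmod 2$ and whose structure one can analyze. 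One should cite \cite{buff} for the detailed computation, but the skeleton is: (a) identify that $a^2+a+1$ divides $R_3$ exactly when $D \equiv 1 \pmod 6$; (b) show the quotient (or $R_3$ itself, in the other cases) has, at a suitable prime, a Newton polygon with a single slope of denominator equal to its degree, or reduces to a power of an irreducible; (c) conclude via the standard irreducibility criterion. One also needs that $a^2+a+1$ appears to the first power only, which follows from $P_n$ having simple roots (Gleason's observation, already quoted).

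The main obstacle I anticipate is part (b): proving that the cofactor $R_3/(a^2+a+1)$ is irreducible over $\Q$ for \emph{every} $D \equiv 1 \pmod 6$, uniformly in $D$. A single prime may not suffice — the reduction mod $2$ and mod $3$ each give only partial factorization data, and one likely needs to play the two primes against each other (an argument of the type: any nontrivial factor would have degree forced to be incompatible by comparing the $2$-adic and $3$-adic Newton polygons, or by comparing degrees of factors mod $2$ versus mod $3$). Controlling this for an infinite family of exponents $D$, rather than a fixed polynomial, is where the real work lies; I would expect to set it up as a case analysis on $D \bmod \ell$ for $\ell \in \{2,3\}$ (and possibly one further auxiliary prime) and verify the Newton polygon at the prime above which $R_3$ reduces to a unit times an irreducible power. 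The other direction — that $R_3$ is \emph{not} irreducible when $D \equiv 1 \pmod 6$ — is immediate once the divisibility by $a^2+a+1$ is established, together with the observation that $R_3$ has degree $D+1 > 2$, so this factorization is nontrivial; and the claim that there are exactly two irreducible factors then amounts to the irreducibility of the degree-$(D-1)$ cofactor, i.e. to part (b) again.
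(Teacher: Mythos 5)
First, a structural point: the paper does not prove this proposition — it is imported from \cite{buff} — so there is no internal argument to compare yours against, and deferring "the detailed computation" back to \cite{buff} cannot count as a proof. Judged on its own terms, your proposal is complete on exactly one point: the computation $R_3(\zeta)=(-1)^D\zeta^{1+2D}+1$ for $\zeta$ a primitive cube root of unity, together with the observation that no power of $\zeta$ equals $-1$, correctly shows $a^2+a+1\mid R_3$ if and only if $D$ is odd and $D\equiv 1\pmod 3$, i.e. $D\equiv 1\pmod 6$; and the exponent of this factor is $1$ because $R_3=P_3$ has simple roots by Gleason's observation. That settles reducibility when $D\equiv 1\pmod 6$ and identifies the cyclotomic factor.

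The genuine gap is your step (b) — the irreducibility of $R_3$ when $D\not\equiv 1\pmod 6$ and of the degree-$(D-1)$ cofactor when $D\equiv 1\pmod 6$ — which is the entire substance of the proposition and for which the tools you propose provably give nothing. Your own substitution $b=a+1$ turns $R_3$ into the trinomial $b^{D+1}-b^D+1$, all of whose coefficients lie in $\{0,\pm1\}$: its Newton polygon at every prime is a single horizontal segment, so no Eisenstein-type or Newton-polygon criterion at $2$, $3$, or any other prime can apply. Nor do the reductions modulo $2$ and $3$ become powers of irreducibles in general: for $D=4$ one has $b^5-b^4+1\equiv (b^2+b+1)(b^3+b+1)\pmod 2$, a product of distinct irreducibles over $\F_2$, even though $R_3$ is irreducible over $\Q$ in that case. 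A two-prime degree-comparison argument would therefore have to be run ad hoc for each $D$, with no mechanism making it close up uniformly — precisely the obstacle you flag but do not overcome. The missing input is of a different nature: $b^{D+1}-b^D+1$ is a trinomial with coefficients $\pm1$, and by the Ljunggren--Schinzel theory of such trinomials $x^n+\eps_1x^m+\eps_2$, the product of the cyclotomic factors can be determined exactly (here it is $\Phi_6(b)=a^2+a+1$ precisely when $D\equiv1\pmod 6$, and trivial otherwise) while the remaining non-reciprocal part is irreducible over $\Q$. Without an ingredient of this kind, your argument establishes only the "reducible when $D\equiv1\pmod 6$" half of the statement.
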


Assume now that $0$ is preperiodic for $f_a$ with preperiod $k\geq 2$ and period $n\geq 1$. Then, 
\begin{equation}\label{eq:omega}
f_a^{\circ (k+n-1)}(0) = \omega f_a^{\circ (k-1)}(0)\quad \text{with}\quad \omega^D=1\quad \text{and}\quad \omega\neq 1.
\end{equation}
In fact, Equation \eqref{eq:omega} is satisfied if and only if either $0$ is periodic for 
$f_a$ with period dividing $\gcd(n,k-1)$, or $0$ is preperiodic for $f_a$ with preperiod $k$ and period dividing $n$. 

For $k\geq 2$, $n\geq 1$ and $d\geq 2$ dividing $D$, we therefore consider the monic polynomial $R_{k,n,d}$ whose roots are the parameters $a\in \C$ such that 
\begin{itemize}
\item $0$ is preperiodic for $f_a$ with preperiod $k$ and period $n$,  and
\item Equation \eqref{eq:omega} is satisfied for some primitive $d$-th root of unity $\omega$.
\end{itemize} 
We claim that $R_{k,n,d}\in \Z[a]$. Indeed, let $\Phi_d\in \Z[X,Y]$ be the (homogenized) $d$-th cyclotomic polynomial: if $\Omega_d$ is the set of primitive $d$-th roots of unity, then 
\[\Phi_d := \prod_{\omega\in \Omega_d} (X-\omega Y).\]
Let $P_{k,n,d}\in \Z[a]$ be the polynomial defined by 
\[P_{k,n,d} :=\Phi_d(P_{k+n-1},P_{k-1}) = \prod_{\omega\in \Omega_d} (P_{k+n-1}-\omega P_{k-1}).\]
The polynomial $P_{k+n-1}-\omega P_{k-1}$ has simple roots (see \cite{buff} for example). In addition, the common roots of $P_{k+n-1}$ and $P_{k-1}$ are the roots of $P_{\gcd(n,k-1)}$.
It follows that the multiple roots of $P_{k,n,d}$ are the roots of $P_{\gcd(n,k-1)}$ with multiplicities $\varphi(d) =\deg(\Phi_d)$, where 
$\varphi$ is the Euler totient function. As a consequence,  
\begin{equation}\label{eq:fkpd}
P_{k,n,d} = P_{{\rm gcd}(n,k-1)}^{\varphi(d)} \cdot \prod_{m|n} R_{k,m,d}
\end{equation}
 and according to the Möbius Inversion Formula,
 \[ R_{k,n,d}=\prod_{m|n} \left(\frac{P_{k,m,d}}{P_{{\rm gcd}(m,k-1)}^{\varphi(d)}}\right)^{\mu(n/m)}\in \Z[a]. 
\]
We shall also consider the polynomials $P_{k,n}\in \Z[a]$ and $R_{k,n}\in \Z[a]$ defined by 
\[P_{k,n} := \prod_{d|D\atop d\neq 1} P_{k,n,d} = \frac{P_{k+n-1}^D - P_{k-1}^D}{P_{k+n-1}-P_{k-1}} = \sum_{i+j=D-1} P_{k+n-1}^i\cdot  P_{k-1}^j.\]
and 
\begin{equation}\label{eq:buff}
R_{k,n} := \prod_{d|D\atop d\neq 1} R_{k,n,d} \in \Z[a] \quad\text{so that} \quad P_{k,n} =  P_{{\rm gcd}(n,k-1)}^{D-1}\cdot \prod_{m|n} R_{k,m}.\end{equation}

We shall study the following conjecture of John Milnor \cite{milnorunicritical} (compare with \cite{hutz}). 

\begin{conjecture}
For all $k\geq 2$, $n\geq 1$, and $d\geq 2$ that divide $D\geq 2$, the polynomial $R_{k,n,d}$ is irreducible over $\Q$.
\end{conjecture}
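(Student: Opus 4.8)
The plan is to mimic, in the one-variable setting, the two-ingredient strategy that worked for $\Sk$ and $\Ck$: reduce to an irreducibility-over-$\Q$ statement via a local criterion, then prove irreducibility over $\Q$ by a reduction-mod-$p$ / Eisenstein-type argument along a well-chosen specialization. Since $R_{k,n,d}$ is already a polynomial in one variable $a$ with $\Z$-coefficients, there is no $\C$-versus-$\Q$ gap to bridge in the way Lemma~\ref{lem:criterion} handles it; instead the whole difficulty is concentrated in controlling the arithmetic of $R_{k,n,d}$ at the prime $p$ dividing $D=p^e$. First I would record the factorization \eqref{eq:fkpd}--\eqref{eq:buff} together with Gleason's observation that $P_n$ has simple roots, so that $R_{k,n,d}$ is a genuine (squarefree, monic) factor of $P_{k+n-1}^D-P_{k-1}^D$, and I would compute $\deg R_{k,n,d}$ and its constant term $R_{k,n,d}(0)$ explicitly from the recursion $P_{j+1}=a P_j^D+1$ (so $P_j(0)$ cycles through the orbit of $0$ under $z\mapsto 1$, giving $P_j(0)=1$ for all $j\geq 1$, whence the constant term of $P_{k+n-1}-\omega P_{k-1}$ is $1-\omega$ and that of $P_{k,n,d}$ is $\Phi_d(1,1)$, which is $p$ when $d=p$ and $1$ when $d$ has at least two distinct prime factors).

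The core step is a mod-$p$ computation. Working in $\F_p[a]$ and using $(x+y)^{p^e}\equiv x^{p^e}+y^{p^e}$, the recursion collapses: $P_{j+1}\equiv a P_j^D+1$, so by induction $P_j\equiv a^{(D^{j-1}-1)/(D-1)}\cdot(\text{something})+\cdots$, and more usefully $P_{j+1}-1\equiv a P_j^D$, which lets one get a clean closed form for $P_j \bmod p$ and hence for $P_{k+n-1}-P_{k-1}$ and for the quotient $P_{k,n}/ (P_{k+n-1}-P_{k-1})$-type expressions. The goal is to show that modulo $p$ the polynomial $R_{k,n,d}$ is a pure power of $a$ (times a unit), exactly as happened in the cubic case ($s_k\equiv b^{2\cdot3^{k-1}-2}\bmod 3$) and the quadratic case ($r_k\equiv a^{2^{k-1}}\bmod 2$). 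Granting that, one checks that $R_{k,n,d}(0)$ equals $\pm p$ (the case $d=p$) — here one must divide the constant term $p$ of $P_{k,p\text{-stuff}}$ among the cyclotomic-level pieces $R_{k,m,d}$ and argue that the whole power of $p$ lands on the top Möbius factor, using $P_{\gcd}(0)=1$ so the denominators $P_{\gcd(m,k-1)}^{\varphi(d)}$ contribute nothing at $0$ — and then Eisenstein at $p$ finishes the case $d=p$.

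The genuine obstacle is the case $d\neq p$, i.e. when $d$ has a prime factor other than $p$ (including $d$ coprime to $p$). Then $\Phi_d(1,1)=1$, so $R_{k,n,d}(0)=\pm 1$ and Eisenstein at $p$ is unavailable — this is precisely why the paper's stated results only cover periods $1$, $2$ (for $D=p^e$) and period $3$ (only for $D=2,8$), and why the general conjecture is open. For the ranges the paper does claim, the plan is: for period $n=1$ we have $R_{k,1,d}=P_{k,1,d}/P_{\gcd(1,k-1)}^{\varphi(d)}=P_{k,1,d}$ and one analyzes $\Phi_d(P_k,P_{k-1})$ directly; for $n=2$, $\gcd(2,k-1)$ is $1$ or $2$ and one must peel off the $R_{k,1,d}$ factor from $P_{k,2,d}$, controlling its reduction mod $p$ and showing the cofactor is mod-$p$ a power of $a$ with correct $p$-adic valuation of the constant term; the exceptional role of $D=8$ for $n=3$ suggests that there a more delicate mod-$2$ identity (perhaps combined with a Newton-polygon argument rather than plain Eisenstein, or an explicit small-degree factorization check ruling out the $a^2+a+1$-type obstruction seen in the $R_3$ proposition) is needed. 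I would therefore expect to state and prove the irreducibility only in those cases, flagging the $d\neq p$ regime as the serious gap, exactly matching the scope announced in the introduction.
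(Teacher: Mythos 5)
The statement you are addressing is stated in the paper as an open conjecture of Milnor: the paper itself only proves Theorem~\ref{theo}, i.e.\ the case where $D=p^e$ is a prime power, $n=1$, together with those $n\geq 2$ for which $R_n\bmod p$ is irreducible over $\F_p$ (by the appendix, only $n=2$ for any prime power $D$, and $n=3$ for $D\in\{2,8\}$). You correctly recognize that only these partial cases are within reach, and for $n=1$ your sketch does match the paper's argument: $R_{k,1,d}=P_{k,1,d}$ reduces mod $p$ to a pure power of $a$, its constant term is $\Phi_d(1,1)=p$, and plain Eisenstein at $p$ concludes.

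For $n\geq 2$, however, your plan goes wrong in two related places. First, $R_{k,n,d}\bmod p$ is \emph{not} a power of $a$: Lemma~\ref{lem:gknd} shows it is a power of $R_n\bmod p$ (for $n=2$, a power of $a+1$), so the irreducible polynomial $B$ entering the Eisenstein-type argument is $R_n\bmod p$, and the requirement that $R_n\bmod p$ be irreducible over $\F_p$ is exactly what restricts the theorem to $n\in\{1,2,3\}$ --- not, as you claim, the behaviour of $\Phi_d(1,1)$. Indeed, when $D=p^e$ every divisor $d\geq 2$ of $D$ is a power of $p$, so $\Phi_d(1,1)=p$ always; the degeneration $\Phi_d(1,1)=1$ only occurs when $D$ has at least two distinct prime factors, which is a separate obstruction to the general conjecture. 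Second, once $B=R_n$ with $n\geq 2$, the condition ``$p^2$ does not divide the constant term'' must be replaced by ``$p^{2\deg(R_n)}$ does not divide $\result(R_{k,n,d},R_n)$'', and the computation $\result(R_{k,n,d},R_n)=\pm p^{\deg(R_n)}$ (Lemma~\ref{lem:resultgkmd}, whose proof splits into several cases using Poonen's observation that $\result(R_m,R_n)=\pm 1$) is the technical heart of the paper; your sketch anticipates neither this resultant computation nor the generalized Eisenstein criterion (Lemma~\ref{lem:eisenstein}: $A\equiv B^N\bmod p$ with $B\bmod p$ irreducible and $p^{2\deg(B)}\nmid\result(A,B)$ forces $A$ irreducible over $\Q$) that consumes it. Your suggestion that $D=8$, $n=3$ needs a special Newton-polygon argument is also off the mark: that case is handled by exactly the same machinery, the only special feature being that $R_3\equiv 1+a+a^9\bmod 2$ happens to be irreducible over $\F_2$.
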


There are few cases where the expression of $R_{k,n,d}$ is sufficiently simple so that existing results in the literature directly apply (see Appendix \ref{app:examples}). 
In this article, our study is inspired by the following fundamental result of Vefa Goksel. 

\begin{theorem}[\cite{goksel}]
If $D$ is a prime number, then $R_{k,1}(c^{D-1})\in \Z[c]$ is irreducible for all $k\geq 2$. If $D=2$, then $R_{k,2}$ is irreducible for all $k\geq 2$. 
\end{theorem}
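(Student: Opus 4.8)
The plan is to deduce both statements from Eisenstein's criterion applied at the relevant prime, in the spirit of §\ref{sec:zcubeplusc} and §\ref{eq:irreducibleQ}. For the first statement it is convenient to conjugate $f_a(z)=az^D+1$, with $a=c^{D-1}$, by $z\mapsto z/c$ to the monic form $g_c(z)=z^D+c$. Writing $\pi_j:=g_c^{\circ j}(0)\in\Z[c]$, so that $\pi_1=c$ and $\pi_{j+1}=\pi_j^D+c$, one has $P_j(c^{D-1})=\pi_j/c$. By \eqref{eq:buff} with $n=1$ and $P_1=1$ we have $R_{k,1}=\sum_{i+j=D-1}P_k^iP_{k-1}^j=\bigl(P_k^D-P_{k-1}^D\bigr)/(P_k-P_{k-1})$, so after substitution
\[R_{k,1}(c^{D-1})=\frac{\pi_k^D-\pi_{k-1}^D}{c^{D-1}\,(\pi_k-\pi_{k-1})}.\]

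Since $\pi_k$ is monic of degree $D^{k-1}$ and $\deg\pi_{k-1}<\deg\pi_k$, this shows $R_{k,1}(c^{D-1})$ is monic of degree $(D-1)(D^{k-1}-1)$; its constant term is $R_{k,1}(0)=\sum_{i+j=D-1}1=D$, because $P_j(0)=1$ for all $j\geq1$. It remains to reduce modulo $D$. Since $D$ is prime, $(x+y)^D\equiv x^D+y^D$, and an elementary induction gives $\pi_j\equiv c^{D^{j-1}}+\cdots+c^D+c\pmod D$, whence $\pi_k-\pi_{k-1}\equiv c^{D^{k-1}}$ and $\pi_k^D-\pi_{k-1}^D\equiv(\pi_k-\pi_{k-1})^D\equiv c^{D^k}$. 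Multiplying the displayed identity by $c^{D-1}(\pi_k-\pi_{k-1})$ and cancelling in the domain $\F_D[c]$ yields $R_{k,1}(c^{D-1})\equiv c^{(D-1)(D^{k-1}-1)}\pmod D$. As the constant term $D$ is not divisible by $D^2$, Eisenstein at $D$ gives irreducibility over $\Q$, which is the first statement.

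For the second statement, $D=2$, so the only divisor $d$ of $D$ with $d\neq1$ is $d=2$, and \eqref{eq:buff} (with $P_1=1$, $R_{k,1}=P_k+P_{k-1}$ and $P_{k,2}=P_{k+1}+P_{k-1}$) gives
\[R_{k,2}=\frac{P_{k+1}+P_{k-1}}{(P_k+P_{k-1})\,P_{\gcd(2,k-1)}},\]
a monic polynomial of degree $2^{k-1}$ if $k$ is even and $2^{k-1}-1$ if $k$ is odd. I would apply Eisenstein at $2$ after the translation $a\mapsto a-1$. Modulo $2$, the recursion $P_{j+1}\equiv aP_j^2+1$ gives by induction $P_j\equiv\sum_{i=0}^{j-1}a^{2^i-1}$, hence $P_{k+1}+P_{k-1}\equiv a^{2^{k-1}-1}(1+a)^{2^{k-1}}$, $P_k+P_{k-1}\equiv a^{2^{k-1}-1}$, $P_2\equiv1+a$ and $P_1=1$; dividing shows $R_{k,2}$ is congruent modulo $2$ to $(1+a)^{2^{k-1}}$ or $(1+a)^{2^{k-1}-1}$, so $R_{k,2}(a-1)$ reduces to a power of $a$. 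For the constant term I evaluate at $a=-1$: since $0\mapsto1\mapsto0$ under $f_{-1}(z)=-z^2+1$, we have $P_j(-1)=1$ for $j$ odd and $P_j(-1)=0$ for $j$ even. When $k$ is even this gives $R_{k,2}(-1)=(1+1)/((0+1)\cdot1)=2$ directly. When $k$ is odd the numerator and the denominator factor $P_2=a+1$ both vanish at $a=-1$; using Gleason's observation that $P_n$ has simple roots, $a+1$ divides $P_{k+1}+P_{k-1}$ to order exactly $1$, and from $P_{j+1}'=P_j^2+2aP_jP_j'$ one gets $P_j'(-1)=1$ for $j$ even, so
\[R_{k,2}(-1)=\frac{P_{k+1}'(-1)+P_{k-1}'(-1)}{(P_k+P_{k-1})(-1)}=\frac{1+1}{1}=2.\]
In both parities the constant term of $R_{k,2}(a-1)$ equals $2$, not divisible by $4$, and since this polynomial is monic and reduces modulo $2$ to a power of $a$, Eisenstein at $2$ applies.

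The main obstacle is the odd-$k$ case of the second statement, where the numerator and denominator of the formula for $R_{k,2}$ share the factor $a+1$: one must check the division is exact — which follows from the polynomial identity \eqref{eq:buff} — and then determine the precise order of vanishing of $P_{k+1}+P_{k-1}$ at $a=-1$ together with the value of the corresponding derivative in order to run Eisenstein. The remaining ingredients, namely the Frobenius identity, the elementary inductions for $P_j$ and $\pi_j$ modulo the prime, and the degree and constant-term bookkeeping, are routine.
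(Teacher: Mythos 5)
Your proof is correct. It is worth noting that the paper itself does not reprove this theorem in the form stated (it is cited from Goksel); what the paper does prove are two close relatives, and your argument sits between them. For the first half, your computation is essentially the paper's warm-up in \S\ref{sec:prefixedD} (Lemmas \ref{lem:fkp} and \ref{lem:fk1}: $R_{k,1}$ is monic with constant term $D$ and congruent to a power of $a$ modulo $D$, then Eisenstein), except that you carry it out after the substitution $a=c^{D-1}$ via the conjugated family $z^D+c$ --- a point the paper's proposition does not address, although it follows at once from the paper's congruence $R_{k,1}\equiv a^{(D-1)N_{k-1}}\bmod D$ by substituting. For the second half, the paper obtains $R_{k,2}=R_{k,2,2}$ ($D=2$) as a corollary of Theorem \ref{theo}, whose proof runs through the generalized Eisenstein criterion of Lemma \ref{lem:eisenstein} with $B=R_2=a+1$, the mod-$2$ factorization of Lemma \ref{lem:gknd}, and the resultant computation $\result(R_{k,2,2},R_2)=\pm 2$ of Lemma \ref{lem:resultgkmd}. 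Your hands-on version is the specialization of that machinery to a linear $B$: translating by $a\mapsto a-1$ turns the generalized criterion into ordinary Eisenstein, and your evaluation $R_{k,2}(-1)=2$ is exactly the resultant computed directly, with the odd-$k$ case handled by differentiating the defining identity instead of by the $P_\ell\equiv P_n \bmod{P_n^D}$ argument. The paper's route buys uniformity over all prime powers $D=p^e$ and all $n$ with $R_n$ irreducible mod $p$; yours is self-contained and elementary for this case. One small quibble: Gleason's simplicity of the roots of $P_n$ does not by itself show that $a+1$ divides $P_{k+1}+P_{k-1}$ to order exactly one (simple roots of the summands say nothing about their sum); it is your computation $(P_{k+1}+P_{k-1})'(-1)=2\neq 0$ that establishes this, so the conclusion stands but the attribution should be dropped.
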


Our main result is the following. In the whole article, $p$ is a prime number. 

\begin{theorem}\label{theo}
Assume $D=p^e$ is a prime power. Then $R_{k,1,d}$ is irreducible over $\Q$ for all $k\geq 2$, and for all $d\geq 2$ that divide $D$. More generally, if $n\geq 2$ and the polynomial $R_n\mod p$ is irreducible over $\F_p$, then $R_{k,n,d}$ is irreducible over $\Q$ for all $k\geq 2$, and for all $d\geq 2$ that divide $D$. 
\end{theorem}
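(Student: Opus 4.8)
The plan is to imitate the two-variable argument from \S\ref{sec:cubic} and \S\ref{sec:quadrat}, now run over the ring $\Z_p$ of $p$-adic integers (or over $\F_p$ after reduction) instead of over the rational line $\{a=0\}$ or $\{b=2\}$. The key structural fact we want to exploit is that the critical point being periodic with period $1$ corresponds, after reduction mod $p$, to a single Gleason-type polynomial whose behavior mod $p$ we can control, exactly as $s_k$ and $r_k$ were controlled by the Eisenstein criterion in the two preceding sections. First I would fix $D=p^e$ and recall from Gleason's observation (quoted before the statement) that the $P_n$ have simple roots, so the factorizations $P_{k,n,d}=P_{\gcd(n,k-1)}^{\varphi(d)}\cdot\prod_{m|n}R_{k,m,d}$ and $P_{k,n}=P_{\gcd(n,k-1)}^{D-1}\cdot\prod_{m|n}R_{k,m}$ of \eqref{eq:fkpd} and \eqref{eq:buff} hold; these reduce the general-$n$ claim to understanding $R_{k,n}$ together with the $d$-components, and the $n=1$ claim to $R_{k,1}$ itself.

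Next I would establish the crucial congruence. Working mod $p$, since $D=p^e$ we have $f_a(z)\equiv az^{p^e}+1$ and the Frobenius identity $(x+y)^{p^e}\equiv x^{p^e}+y^{p^e}\pmod p$ gives, by an elementary induction, a clean closed form for $P_n\bmod p$ as a polynomial in $a$ (a ``$p$-adic digit expansion'' in powers of $a$), and more importantly for $R_n\bmod p$. One then computes that $P_{k+n-1}-P_{k-1}$ reduces mod $p$ to something like $a^{N}\cdot(\text{unit-ish factor})$ controlled by $R_n\bmod p$: when $n\ge 2$ and $R_n\bmod p$ is irreducible over $\F_p$ by hypothesis, the nontrivial part of the reduction of $P_{k+n-1}-\omega P_{k-1}$ is a power of $R_n\bmod p$ for every $D$-th root of unity $\omega$ (since $\omega\equiv 1\bmod{\mathfrak p}$ in the ring $\Z[\omega]$ localized at a prime above $p$), hence $R_{k,n,d}\bmod p$ is, up to a unit, a power of $R_n\bmod p$; for $n=1$ the same computation shows $R_{k,1,d}$ reduces to a power of $a$ mod $p$. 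Combined with a computation of the constant term of $R_{k,n,d}$ — which I expect to be a $p$-unit times $p$ exactly once, as with $s_k(0)=3$ and $r_k(0)=-2$ — this sets up an Eisenstein-type argument: over the DVR $\Z_p$, $R_{k,n,d}$ is a unit times a power of the irreducible $R_n\bmod p$ after reduction, with $p$-adic valuation of the constant term equal to $1$, so $R_{k,n,d}$ is irreducible over $\Q_p$, a fortiori over $\Q$. I should also track the degree of $R_{k,n,d}$ against the degree of the relevant power of $R_n$ to rule out the reduction picking up extra unit factors, using that $\deg R_{k,n,d}=\varphi(d)\deg R_{k,n}/\text{(number of }d\text{)}$ together with \eqref{eq:fkpd}.

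The main obstacle I anticipate is the bookkeeping around multiplicities and the distinction between the several polynomials $R_{k,n,d}$, $R_{k,n}$, $R_n$, $P_{\gcd(n,k-1)}$: one must show that reducing mod $p$ does not cause $R_{k,n,d}$ to acquire a spurious factor coming either from $P_{\gcd(n,k-1)}^{\varphi(d)}$ (which could share roots mod $p$ with $R_n$ even though it does not over $\Qbar$) or from a coincidence between the $\omega$ and $\omega'$ factors for distinct primitive $d$-th roots of unity (all congruent to $1$ mod $\mathfrak p$, so their factors all reduce to the same thing — this is actually what makes the ``power of $R_n\bmod p$'' statement work, but one must verify the total degree matches so that no cancellation or extra factor sneaks in). A secondary technical point is justifying the Eisenstein step in the ramified setting $D=p^e>p$: one genuinely works over $\Z_p$ and must argue that $R_{k,n,d}$, whose reduction mod $p$ is a prime power $\bar R_n^{\,m}$ with $\gcd$ of exponents forcing $m$ minimal, together with constant-term valuation $1$, is irreducible — this is the Eisenstein criterion relative to the prime ideal $(R_n\bmod p)$ of $\F_p[a]$, pulled back to $\Z_p[a]$, and I would phrase it as such rather than the classical version. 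Once these are in place, Theorems \ref{theo:maincubic} and \ref{theo:mainquad} follow by specializing the ambient two-variable $R_k$ to the line ($\{a=0\}$, $D=3$, $n=1$; $\{b=2\}$, $D=2$, $n=1$) as already done in \S\ref{sec:zcubeplusc} and \S\ref{eq:irreducibleQ}.
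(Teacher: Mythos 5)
Your overall architecture matches the paper's: show that $R_{k,n,d}\bmod p$ is a power of an irreducible element of $\F_p[a]$ (namely $a$ when $n=1$, and $R_n\bmod p$ when $n\geq 2$), then invoke an Eisenstein criterion relative to that irreducible. The Frobenius computation you sketch for the reduction is essentially the paper's Lemma \ref{lem:gknd} (obtained there via $P_{k+n-1}-P_{k-1}\equiv a^{N_{k-1}}P_n^{D^{k-1}}\bmod p$ and M\"obius inversion), and your remark that Eisenstein must be phrased relative to the prime $(p,R_n)$ rather than classically is exactly the paper's Lemma \ref{lem:eisenstein}. For $n=1$ your plan does go through: since $P_1=1$, Equation \eqref{eq:fkpd} gives $R_{k,1,d}=P_{k,1,d}=\Phi_d(P_{k+n-1},P_{k-1})$, whose constant coefficient is $\Phi_d(1,1)=p$, and this constant coefficient is (up to sign) $\result(R_{k,1,d},a)$, so the classical Eisenstein criterion applies.

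For $n\geq 2$, however, there is a genuine gap at the crucial non-degeneracy step. First, your expectation about the constant term is false: every $P_j$ has constant coefficient $1$, so Equation \eqref{eq:fkpd} gives $p=\Phi_d(1,1)=\prod_{m|n}R_{k,m,d}(0)$, and the single factor of $p$ is already absorbed by the divisor $m=1$ via $R_{k,1,d}(0)=p$; hence $R_{k,n,d}(0)=\pm 1$ for every $n\geq 2$. Second, and more fundamentally, the constant term is the wrong invariant: it equals $\pm\result(R_{k,n,d},a)$ and detects behaviour at the prime $(p,a)$, whereas your Eisenstein argument at the prime $(p,R_n)$ requires controlling $\result(R_{k,n,d},R_n)$ --- and $a$ does not divide $R_n\bmod p$, since $R_n(0)=1$. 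Establishing that $\result(R_{k,n,d},R_n)=\pm p^{\deg(R_n)}$, so that $p^{2\deg(R_n)}$ does not divide it, is the most substantial part of the paper's proof (Lemma \ref{lem:resultgkmd}): it uses that the roots of $R_n$ are algebraic units, the telescoping identity $\alpha^n\prod_{j=0}^{n-1}P_{k+j,m}(\alpha)=1$, Poonen's Lemma \ref{lem:poonen} that $\result(R_m,R_n)=\pm 1$ for $m\neq n$, and a case analysis on whether $n$ divides $m$ and whether $n$ divides $k-1$. Nothing in your plan supplies this computation, and no constant-term argument can substitute for it.
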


\begin{corollary}
Assume $D=p^e$ is a prime power. Then $R_{k,2,d}$ is irreducible over $\Q$ for all $k\geq 2$, and for all $d\geq 2$ that divide $D$.
\end{corollary}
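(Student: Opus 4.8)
The corollary asks to deduce irreducibility of $R_{k,2,d}$ from Theorem~\ref{theo}. The plan is to verify the hypothesis of the second sentence of Theorem~\ref{theo} with $n=2$, namely that $R_2 \bmod p$ is irreducible over $\F_p$. Since $R_2 = P_2 = a+1$ is a linear polynomial, it is irreducible over any field, and in particular $R_2 \equiv a+1 \bmod p$ is irreducible over $\F_p$. Applying Theorem~\ref{theo} with $n=2$ then immediately gives that $R_{k,2,d}$ is irreducible over $\Q$ for all $k\geq 2$ and all $d\geq 2$ dividing $D=p^e$.

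More precisely, I would write: fix a prime power $D=p^e$. The case $n=2$ of Theorem~\ref{theo} requires only that $R_2 \bmod p$ be irreducible over $\F_p$. From the explicit formula $R_2 = P_2 = a+1$ recorded just before the statement of the Milnor conjecture in \S\ref{sec:unicritical}, the reduction $R_2 \bmod p$ is the degree-one polynomial $a+1 \in \F_p[a]$, which is trivially irreducible. Hence the hypothesis of Theorem~\ref{theo} is satisfied for $n=2$, and the conclusion follows.

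There is essentially no obstacle here: the corollary is a direct specialization of Theorem~\ref{theo}, and the only thing to check is the triviality that a linear polynomial stays irreducible modulo $p$. The one point worth a sentence of care is that Theorem~\ref{theo} is stated for $n\geq 2$ in its general clause, so $n=2$ is in range, and that $d$ ranges over all divisors $\geq 2$ of $D=p^e$ (these are exactly $p, p^2, \dots, p^e$), matching the statement of the corollary. So the proof is:

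\begin{proof}
By Theorem~\ref{theo}, it suffices to check that $R_2 \bmod p$ is irreducible over $\F_p$. Since $R_2 = P_2 = a+1$ has degree $1$, its reduction modulo $p$ is the degree-one polynomial $a+1\in\F_p[a]$, which is irreducible. Applying Theorem~\ref{theo} with $n=2$ yields that $R_{k,2,d}$ is irreducible over $\Q$ for all $k\geq 2$ and all $d\geq 2$ dividing $D=p^e$.
\end{proof}
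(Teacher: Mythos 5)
Your proof is correct and is exactly the paper's argument: the paper's proof of this corollary is the single sentence that the reduction of $R_2=a+1$ modulo $p$ is irreducible over $\F_p$, after which Theorem~\ref{theo} applies with $n=2$. Your additional remarks about the range of $n$ and $d$ are accurate but not needed.
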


\begin{proof}
The reduction of $R_2=a+1$ modulo $p$ is irreducible over $\F_p$. 
\end{proof}

\begin{corollary}
 If $D=2$ then $R_{k,3}$ is irreducible over $\Q$ for all $k\geq 2$. 
\end{corollary}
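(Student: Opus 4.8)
The plan is to deduce this corollary directly from Theorem~\ref{theo}, applied with the prime power $D=p^e=2^1$ (so $p=2$) and with $n=3$. Since $D=2$, the only integer $d\geq 2$ dividing $D$ is $d=2$; hence, by the definition $R_{k,n}=\prod_{d\mid D,\ d\neq 1}R_{k,n,d}$ recorded in~\eqref{eq:buff}, we have $R_{k,3}=R_{k,3,2}$. Thus it is enough to verify the hypothesis of the second part of Theorem~\ref{theo} for $n=3$, namely that the reduction of $R_3$ modulo $2$ is irreducible over $\F_2$.

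Next I would carry out the one short computation this requires. For $D=2$ the excerpt records $R_3=P_3=a(a+1)^D+1=a(a+1)^2+1=a^3+2a^2+a+1$, so that $R_3\equiv a^3+a+1\mod 2$.

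Finally I would observe that $a^3+a+1\in\F_2[a]$ has no root in $\F_2$ (its values at $a=0$ and at $a=1$ are both equal to $1$), and a cubic polynomial over a field having no root in that field is irreducible. Hence $R_3$ modulo $2$ is irreducible over $\F_2$, and Theorem~\ref{theo} then yields that $R_{k,3,2}=R_{k,3}$ is irreducible over $\Q$ for every $k\geq 2$, which is the assertion. I do not anticipate any genuine obstacle here: the whole argument is the verification of a single hypothesis together with a one-line irreducibility check in $\F_2[a]$, the only point requiring a moment's care being the bookkeeping identification $R_{k,3}=R_{k,3,2}$, which holds precisely because $2$ admits a unique divisor $\geq 2$.
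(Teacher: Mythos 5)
Your argument is correct and is essentially the paper's own proof: both reduce the corollary to checking that $R_3=a(a+1)^2+1\equiv a^3+a+1\pmod 2$ is irreducible over $\F_2$ and then invoke Theorem~\ref{theo}. Your extra remark identifying $R_{k,3}$ with $R_{k,3,2}$ via Equation~\eqref{eq:buff} is a small piece of bookkeeping the paper leaves implicit, and it is handled correctly.
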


\begin{proof}
If $D=2$, then $R_3=a(a+1)^2+1\equiv 1+a+a^3\mod 2$ and $R_3 \mod 2$ is irreducible over $\F_2$.
\end{proof}

\begin{corollary}
 If $D=8$, then $R_{k,3,2}$, $R_{k,3,4}$ and $R_{k,3,8}$ are irreducible over $\Q$ for all $k\geq 2$. 
\end{corollary}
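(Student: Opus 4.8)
The goal is to derive the $D=8$ case directly from Theorem~\ref{theo}, so the entire plan reduces to checking the hypothesis of that theorem for $D=8$, $n=3$. First I would recall that $D=8=2^3$ is a prime power with $p=2$, so Theorem~\ref{theo} applies provided $R_3 \bmod 2$ is irreducible over $\F_2$. From the excerpt, $R_3 = P_3 = a(a+1)^D+1$; specializing $D=8$ gives $R_3 = a(a+1)^8+1$. Reducing modulo $2$ and using $(a+1)^8 \equiv a^8+1 \pmod 2$ (the Frobenius applied three times), we get $R_3 \equiv a(a^8+1)+1 = a^9+a+1 \pmod 2$.

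So the one substantive point is to verify that $a^9+a+1$ is irreducible over $\F_2$. The plan is to check this by elimination of low-degree factors. It has no root in $\F_2$ since $0^9+0+1=1$ and $1^9+1+1=1$. Next I would rule out irreducible factors of degree $1,2,3,4$ by noting that any such factor would be a factor of $a^{2^j}-a$ for $j=1,2,3,4$, so it suffices to compute $\gcd(a^9+a+1,\ a^{2^j}+a)$ in $\F_2[a]$ for $j\le 4$; each of these gcds is $1$ by a short Euclidean computation (equivalently, one checks $a^9+a+1$ shares no factor with $a^2+a$, $a^4+a$, $a^8+a$, $a^{16}+a$). Since a degree-$9$ polynomial with no factor of degree $\le 4$ and no linear factor must be irreducible (a nontrivial factorization of a degree-$9$ polynomial forces a factor of degree at most $4$), this establishes irreducibility of $R_3 \bmod 2$ over $\F_2$.

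Having verified the hypothesis, I would invoke Theorem~\ref{theo} with $D=8$, $n=3$: since $R_3 \bmod 2$ is irreducible over $\F_2$, the polynomial $R_{k,3,d}$ is irreducible over $\Q$ for all $k\ge 2$ and all $d\ge 2$ dividing $8$, i.e.\ for $d\in\{2,4,8\}$. This is exactly the statement of the corollary.

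The only place where anything could go wrong is the modular arithmetic: one must be careful that the reduction $(a+1)^8\equiv a^8+1\pmod 2$ is correct (it is, since squaring is additive in characteristic $2$ and $8$ is a power of $2$) and that the resulting polynomial $a^9+a+1$ is genuinely irreducible rather than, say, factoring as a product of an irreducible cubic and an irreducible sextic — which the gcd computations above rule out. Beyond that bookkeeping, there is no obstacle: the corollary is a direct specialization of Theorem~\ref{theo}.
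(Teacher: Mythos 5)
Your proposal is correct and follows exactly the paper's route: reduce $R_3=a(a+1)^8+1$ modulo $2$ to get $a^9+a+1$, verify its irreducibility over $\F_2$, and invoke Theorem~\ref{theo} with $D=8=2^3$ and $n=3$ to cover $d\in\{2,4,8\}$. The only difference is that the paper asserts the irreducibility of $a^9+a+1$ over $\F_2$ without comment, whereas you spell out the (correct) verification by excluding irreducible factors of degree at most $4$.
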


\begin{proof}
If $D=8$, then $R_3 = a(a+1)^8+1\equiv 1+a+a^9\mod 2$ and $R_3 \mod 2$ is irreducible over $\F_2$.
\end{proof}

\begin{remark}
The only values of $D = p^e$ and $n\geq 2$ for which the polynomial $R_n \mod p$ is irreducible over $\F_p$ are the one listed previously: $n=2$ for any prime power degree $D$, and $n=3$ for both $D=2$ and $D=8$ (see Appendix \ref{app:irreduc}). 
\end{remark}

Our proof of Theorem \ref{theo} relies on the following two results (see \S \ref{sec:general}). 

\begin{lemma}\label{lem:resultgkmd}
Assume $d\geq 2$ divides $D\geq 2$. Assume $k\geq 2$, $n\geq 1$ and $m\geq 1$. Then, 
\[\result(R_{k,m,d},R_n) = \begin{cases}
±p^{\deg(R_n)}&\text{if }n=m\text{ and }d=p^e\text{ is a prime power}\\
±1&\text{otherwise}.\end{cases}\]
\end{lemma}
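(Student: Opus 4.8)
The plan is to compute the resultant $\result(R_{k,m,d},R_n)$ by relating it, via multiplicativity of the resultant and the factorization formulas \eqref{eq:fkpd} and \eqref{eq:buff}, to resultants of the basic polynomials $P_j$, and then to reduce everything modulo $p$ and use Gleason's observation that $\disc(P_n)\equiv 1\mod D$ together with the Eisenstein-type congruences already exploited in \S\ref{sec:unicritical}. First I would recall that $\result(A\cdot B,C)=\result(A,C)\cdot\result(B,C)$ and that $\result(A,C)$ divides any $\Z$-combination expressing a common root obstruction; since all the $P_j$, $R_n$, $R_{k,m,d}$ are monic in $a$ this is clean. Using \eqref{eq:fkpd} and Möbius inversion, $R_{k,m,d}$ is a ratio of products of the polynomials $P_{k,i,d}=\Phi_d(P_{k+i-1},P_{i-1})$ over $i\mid m$, divided by powers of $P_{\gcd(i,k-1)}$; so $\result(R_{k,m,d},R_n)$ is a product and quotient of factors $\result(\Phi_d(P_{k+i-1},P_{i-1}),R_n)$ and $\result(P_{\gcd(i,k-1)},R_n)$. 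The factors of the second type are $\pm1$ because, modulo any prime $q$, a common root of $P_\ell$ and $R_n$ would be a common root of $P_\ell$ and $P_n$, hence a root of $P_{\gcd(\ell,n)}$, which by Gleason's simple-roots fact forces $q\mid D$; but then one checks (as in the $\mathbb F_p$ computations of \S\ref{sec:zcubeplusc}-type arguments) that $P_\ell$ and $R_n$ have no common root mod $p$ either, since mod $p$ the critical orbit is "frozen" — $f_a^{\circ j}(0)\equiv 1\mod p$ for the relevant indices or the orbit degenerates, contradicting strict preperiod/period. So those contributions are units and the answer is governed entirely by the factors $\result(\Phi_d(P_{k+i-1},P_{i-1}),R_n)$.

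Next I would analyze $\result(\Phi_d(P_{k+i-1},P_{i-1}),R_n)$. Over $\overline{\Q}$ this is $\prod_{\omega}\result(P_{k+i-1}-\omega P_{i-1},R_n)$ over primitive $d$-th roots $\omega$, and each $\result(P_{k+i-1}-\omega P_{i-1},R_n)=\prod_{P_n(\alpha)=0,\ \mathrm{ord}=n}\bigl(P_{k+i-1}(\alpha)-\omega P_{i-1}(\alpha)\bigr)$. At a root $\alpha$ of $R_n$, $0$ is periodic for $f_\alpha$ with period exactly $n$, so $f_\alpha^{\circ\ell}(0)$ depends only on $\ell\bmod n$; hence $P_{k+i-1}(\alpha)-\omega P_{i-1}(\alpha)=0$ would force $\omega$ times $f_\alpha^{\circ(i-1)}(0)$ to equal $f_\alpha^{\circ(k+i-1)}(0)=f_\alpha^{\circ(k+i-1\bmod n)}(0)$, which generically fails. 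The point is to show the product of all these values over all $\alpha$ and all $\omega$ (and all $i\mid m$, with the Möbius signs) collapses: when $n\neq m$ the whole thing is a unit; when $n=m$ it contributes exactly $\pm p^{\deg R_n}$ when $d$ is a prime power, and $\pm1$ otherwise. I would establish this by reducing mod $p$: there $\Phi_d(X,Y)\equiv (X-Y)^{\varphi(d)}\mod p$ when $d=p^e$ (since all primitive $p^e$-th roots of unity collapse to $1$ in characteristic $p$), while $\Phi_d\bmod p$ is squarefree and coprime to $(X-Y)$ when $d$ is not a power of $p$. So mod $p$, $\result(R_{k,n,p^e},R_n)$ picks up $\result\bigl((P_{k+n-1}-P_{n-1})^{\cdots},R_n\bigr)$, and $P_{k+n-1}-P_{n-1}\equiv P_{k+n-1}-P_{n-1}$ vanishes to the expected order at roots of $R_n$ because there $f_\alpha$ has period $n$ so $P_{k+n-1}(\alpha)=P_{n-1}(\alpha)$ — giving the factor $p^{\deg R_n}$ after matching multiplicities via $\varphi(p^e)$ and the structure of \eqref{eq:fkpd}; whereas for non-prime-power $d$, or $n\neq m$, the mod-$p$ resultant is a nonzero constant, i.e. $\pm1$.

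The sign and exact-power bookkeeping is the routine part; \textbf{the main obstacle} will be getting the multiplicities exactly right — i.e. proving the power of $p$ is precisely $\deg(R_n)$ and not a larger multiple. This requires showing that at each root $\alpha$ of $R_n$ (mod $p$, or $p$-adically), $P_{k+n-1}-P_{n-1}$ vanishes to order exactly $1$, equivalently that the derivative $\frac{d}{da}\bigl(f_a^{\circ(k+n-1)}(0)-f_a^{\circ(n-1)}(0)\bigr)$ does not vanish at $\alpha$ mod $p$. I expect to get this from Gleason's discriminant computation (simple roots of $P_n$ mod $D$, hence mod $p$) combined with the chain rule along the periodic orbit: the non-vanishing of that derivative reduces to non-vanishing of a product of multipliers and "transversality" terms along the cycle, which is exactly the content of Gleason's argument that periodic orbits are non-degenerate in this family. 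Once that transversality is in hand, counting gives exactly $\varphi(p^e)\cdot(\text{something})$ contributions that telescope under Möbius inversion to $p^{\deg(R_n)}$, completing the proof.
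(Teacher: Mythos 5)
There are genuine gaps here, and the most serious one is structural. Your plan is to compute $\result(R_{k,m,d},R_n)$ as a product and quotient of the resultants $\result\bigl(\Phi_d(P_{k+i-1},P_{k-1}),R_n\bigr)$ and $\result(P_{\gcd(i,k-1)},R_n)$, asserting that the latter are all $\pm 1$. This fails precisely when $n$ divides $\gcd(m,k-1)$: in that case $R_n$ divides $P_{\gcd(i,k-1)}$ for the relevant $i$, so $\result(P_{\gcd(i,k-1)},R_n)=0$, and $\result(P_{k,m,d},R_n)=0$ as well, so your quotient is $0/0$. The paper isolates this as a separate case and resolves it with the congruence $P_\ell\equiv P_n\pmod{P_n^D}$ (from the proof of Lemma \ref{lem:poonen}), which allows one to divide out the factor $P_n^{\varphi(d)}$ \emph{before} taking any resultants and to recognize the remaining product as $\nu$ plus a multiple of $P_n$, where $\nu=\Phi_d(1,1)$; the resultant with $R_n$ is then exactly $\pm\nu^{\deg(R_n)}$. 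Your proposal has no mechanism for this case. A second gap: you repeatedly infer ``$\pm 1$'' from a computation modulo $p$. A resultant that is a nonzero constant mod $p$ is merely prime to $p$; to conclude it is a unit in $\Z$ you must control every prime. The paper does this without reducing mod $p$: when $n$ does not divide $m$ it uses the telescoping identity $\alpha P_{k,m}(\alpha)=\bigl(P_{k+m}(\alpha)-P_k(\alpha)\bigr)/\bigl(P_{k+m-1}(\alpha)-P_{k-1}(\alpha)\bigr)$ to obtain $\alpha^n\prod_{j=0}^{n-1}P_{k+j,m}(\alpha)=1$, together with the fact that roots of $R_n$ are algebraic units (monic, constant coefficient $1$), so the product of the integer resultants is $\pm1$ and hence so is each factor. (Also, your claim that mod $p$ the critical orbit is ``frozen'' with $f_a^{\circ j}(0)\equiv 1$ is false: already $P_2=a+1$.)

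Finally, the exact power $p^{\deg(R_n)}$ neither requires, nor would easily follow from, a mod-$p$ transversality statement about $P_{k+n-1}-P_{k-1}$ at roots of $R_n$. Gleason's discriminant computation gives simple roots of $P_n$ modulo $p$; it does not give nonvanishing of $\frac{d}{da}\bigl(P_{k+n-1}-P_{k-1}\bigr)$ at those roots modulo $p$, and you offer no argument for that. The paper obtains the exact power from two exact identities: at a root $\alpha$ of $R_n$ with $n\mid m$ one has $P_{k+m-1}(\alpha)=P_{k-1}(\alpha)$, hence $P_{k,m,d}(\alpha)=\Phi_d(1,1)\,P_{k-1}(\alpha)^{\varphi(d)}$, and $\Phi_d(1,1)$ equals $p$ exactly when $d$ is a power of $p$ and equals $1$ otherwise; multiplying over the $\deg(R_n)$ roots gives $\nu^{\deg(R_n)}$ on the nose, the residual factor $\result(P_{k-1}^{\varphi(d)},R_n)=\pm1$ being handled by Lemma \ref{lem:poonen}. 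I recommend restructuring your argument along these lines rather than pursuing the mod-$p$ and transversality route.
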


\begin{lemma}\label{lem:gknd}
Assume $D=p^e$ is a prime power and $d\geq 2$ is a divisor of $D$. Then for all $k\geq 2$, the polynomials $R_{k,1,d} \mod p$ are powers of $a\in \F_p[a]$; and for all $k\geq 2$ and all $n\geq 2$, the polynomials $R_{k,n,d} \mod p$ are powers of $R_n\mod p$.
\end{lemma}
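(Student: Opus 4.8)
The plan is to compute $R_{k,n,d}\mod p$ essentially explicitly. The engine is the Frobenius congruence $(x+y)^D\equiv x^D+y^D\mod p$, available precisely because $D=p^e$, together with the identity \eqref{eq:fkpd} and $P_m=\prod_{\ell\mid m}R_\ell$.

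First I reduce the cyclotomic factor. Since $d\geq2$ divides $D=p^e$, necessarily $d=p^j$ with $1\leq j\leq e$, and from $\Phi_p(X,Y)=(X^p-Y^p)/(X-Y)\equiv(X-Y)^{p-1}\mod p$ and $\Phi_{p^j}(X,Y)=\Phi_p\bigl(X^{p^{j-1}},Y^{p^{j-1}}\bigr)$ one gets $\Phi_d(X,Y)\equiv(X-Y)^{\varphi(d)}\mod p$, hence $P_{k,n,d}\equiv(P_{k+n-1}-P_{k-1})^{\varphi(d)}\mod p$. Next I identify the base. Set $U_j:=f_a^{\circ j}(z)-f_a^{\circ j}(0)\in\Z[a,z]$; then $U_1=az^D$ and $U_{j+1}=a\bigl((f_a^{\circ j}(z))^D-(f_a^{\circ j}(0))^D\bigr)\equiv aU_j^{\,D}\mod p$, so by induction $U_j\equiv a^{c_j}z^{D^j}\mod p$ with $c_j:=(D^j-1)/(D-1)$. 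Since $P_{k+n-1}=f_a^{\circ(k-1)}(P_n)$ and $P_{k-1}=f_a^{\circ(k-1)}(0)$ (and $k-1\geq1$), substituting $z=P_n$ gives $P_{k+n-1}-P_{k-1}\equiv a^{c}P_n^{\,D^{k-1}}\mod p$ with $c:=c_{k-1}$. Combining,
\[P_{k,n,d}\equiv a^{c\varphi(d)}\,P_n^{\,D^{k-1}\varphi(d)}\mod p.\]

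Now I pass to $R_{k,n,d}$. For $n=1$, since $\gcd(1,k-1)=1$ and $P_1=1$ we get $R_{k,1,d}=P_{k,1,d}\equiv a^{c\varphi(d)}\mod p$, a power of $a$. For $n\geq2$, rewrite \eqref{eq:fkpd} as $\prod_{\ell\mid m}R_{k,\ell,d}=P_{k,m,d}\big/P_{\gcd(m,k-1)}^{\varphi(d)}$; substituting the congruence above and using $P_m=\prod_{\ell\mid m}R_\ell$, $R_1=1$, $\gcd(m,k-1)\mid m$ gives, for every $m\geq1$,
\[\prod_{\ell\mid m}R_{k,\ell,d}\equiv a^{c\varphi(d)}\prod_{\ell\mid m}R_\ell^{\,\varphi(d)(D^{k-1}-\delta_\ell)}\mod p,\]
where $\delta_\ell=1$ if $\ell\mid k-1$ and $\delta_\ell=0$ otherwise (so all exponents are $\geq0$). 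Möbius inversion over the divisors of $n$ then finishes it: the factor $a$ disappears because $\sum_{m\mid n}\mu(n/m)=0$ for $n\geq2$, and for each $\ell\mid n$ the factor $R_\ell$ survives with exponent $\varphi(d)(D^{k-1}-\delta_\ell)\sum_{m\,:\,\ell\mid m\mid n}\mu(n/m)$, which vanishes unless $\ell=n$. Hence
\[R_{k,n,d}\equiv R_n^{\,\varphi(d)(D^{k-1}-\delta_n)}\mod p,\]
a power of $R_n\mod p$ (with positive exponent, since $D^{k-1}\geq D\geq2$).

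The two inductions and the Frobenius manipulations are routine. The only points deserving a little care are that $P_{\gcd(m,k-1)}^{\varphi(d)}$ genuinely divides $P_{k,m,d}$ in $\Z[a]$ and that, after cancellation, every $R_\ell$ appears with a nonnegative exponent — both consequences of $P_m=\prod_{\ell\mid m}R_\ell$ and $\gcd(m,k-1)\mid m$ — and keeping the divisor sums in the Möbius inversion straight. I expect this bookkeeping, rather than any genuine difficulty, to be the main obstacle.
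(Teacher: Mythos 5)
Your proof is correct, and its engine is the same as the paper's: both hinge on the Frobenius congruence yielding $P_{k+n-1}-P_{k-1}\equiv a^{N_{k-1}}P_n^{D^{k-1}}\mod p$ (your two-variable induction on $f_a^{\circ j}(z)-f_a^{\circ j}(0)$ is a clean repackaging of the paper's Lemma \ref{lem:fkfk} followed by telescoping and regrouping), and both finish by M\"obius inversion over the divisors of $n$. The genuine difference is in how the cyclotomic factor is handled. The paper disposes of $d$ at the outset: since $R_{k,n,d}$ divides $R_{k,n}=\prod_{d}R_{k,n,d}$, it reduces to computing $R_{k,n}\equiv R_n^{M_{k,n}}\mod p$ via Equation \eqref{eq:buff}. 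You instead reduce $\Phi_d(X,Y)\equiv(X-Y)^{\varphi(d)}\mod p$ and run the inversion through Equation \eqref{eq:fkpd} for each fixed $d$, arriving at the explicit exponent $R_{k,n,d}\equiv R_n^{\varphi(d)(D^{k-1}-\delta_n)}\mod p$. This costs a little more bookkeeping but is also slightly sharper: the paper's reduction implicitly uses that a monic divisor of a power of $R_n\bmod p$ is again a power of $R_n\bmod p$, which is automatic only when $R_n\bmod p$ is (a power of) an irreducible --- the only case needed for Theorem \ref{theo}, but not what the lemma asserts for general $n\geq 2$; your direct computation establishes the lemma in its stated generality. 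The points you flag as needing care (exact divisibility by $P_{\gcd(m,k-1)}^{\varphi(d)}$, nonnegativity of the exponents after cancellation, and inverting in $\F_p(a)^{*}$ where all the reductions are nonzero because the polynomials are monic) are indeed routine and resolve exactly as you indicate.
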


\begin{remark}
Lemma \ref{lem:resultgkmd} shows a connection between  the polynomials $R_{k,n,d}$ and the polynomials $R_n$, valid for all degrees $D\geq 2$. Lemma \ref{lem:gknd} shows a stronger connection between these polynomials, but only valid for prime power degrees $D=p^e$. We think that it is worth investigating what this relation becomes when $D$ is no longer a prime power. 
\end{remark}

\subsection{The critical orbit}

\noindent We shall first study some properties of the polynomials $P_k\in \Z[a]$. 
 Recall that by definition, for all $k\geq 1$, 
\[P_k(a) := f_a^{\circ k}(0).\]
For $k\geq 0$, set 
\[N_k:= \frac{D^k-1}{D-1}\quad \text{so that}\quad 1+D N_k = \frac{D-1 + D^{k+1} - D}{D-1} = N_{k+1}.\]

\begin{lemma}\label{lem:fk}
For all $k\geq 1$, the polynomial $P_k$ has constant coefficient $1$ and is monic of degree $N_{k-1}$.
\end{lemma}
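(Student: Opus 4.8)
The plan is to prove Lemma~\ref{lem:fk} by induction on $k\geq 1$, tracking both the constant coefficient and the leading (top-degree) behaviour of $P_k$ simultaneously.

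\smallskip

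\noindent\textbf{Base case.} For $k=1$ we have $P_1(a) = f_a(0) = a\cdot 0^D + 1 = 1$, which has constant coefficient $1$ and is monic of degree $N_0 = \frac{D^0-1}{D-1} = 0$. (Here a degree-$0$ monic polynomial means the constant $1$, consistent with $P_1 = 1$.)

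\smallskip

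\noindent\textbf{Inductive step.} Suppose $P_k$ has constant coefficient $1$ and is monic of degree $N_{k-1}$. By definition $P_{k+1}(a) = f_a\bigl(P_k(a)\bigr) = a\cdot P_k(a)^D + 1$. Setting $a=0$ gives $P_{k+1}(0) = 0\cdot P_k(0)^D + 1 = 1$, so the constant coefficient of $P_{k+1}$ is $1$. For the degree: since $P_k$ is monic of degree $N_{k-1}$, the polynomial $P_k^D$ is monic of degree $D\,N_{k-1}$, and hence $a\cdot P_k^D$ is monic of degree $1 + D\,N_{k-1}$. Adding the constant $1$ does not affect the top-degree term (as $k\geq 1$ forces $1 + D\,N_{k-1}\geq 1 > 0$), so $P_{k+1}$ is monic of degree $1 + D\,N_{k-1} = N_k$, using the identity $1 + D N_{k-1} = N_k$ recorded just before the statement of the lemma. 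This closes the induction.

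\smallskip

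\noindent I do not anticipate any genuine obstacle here; the only point requiring a moment's care is the degenerate interpretation of the $k=1$ case, where $N_0 = 0$ and ``monic of degree $0$'' must be read as the statement $P_1 = 1$. Everything else is a direct unwinding of the recursion $P_{k+1} = aP_k^D + 1$ together with the arithmetic identity $1 + D N_{k-1} = N_k$.
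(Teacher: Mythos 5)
Your proof is correct and follows essentially the same route as the paper's: induction on $k$ using the recursion $P_{k+1}=aP_k^D+1$ and the identity $1+DN_{k-1}=N_k$, with the constant coefficient read off by setting $a=0$. The extra remark about interpreting ``monic of degree $0$'' for $P_1=1$ is a harmless clarification.
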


\begin{proof}
First, note that $P_1=1$ and for all $k\geq 1$, $P_{k+1} = a P_k^D+1$. It follows that the constant coefficient of $P_{k+1}$ is $1$. 
Second, let us prove by induction on $k\geq 1$ that $P_k$ is monic of degree $N_{k-1}$. 
The property holds for $k=1$: indeed, $P_1=1$ and $N_0=0$. Now, if the result holds for some integer $k\geq 1$, then $P_{k+1} = a P_k^D+1$ 
is monic of degree $1+D N_{k-1} = N_k$. 
\end{proof}

\begin{lemma}\label{lem:fkfk}
Assume $D=p^e$ is a prime power.  For all $k\geq 1$, 
\[P_{k+1} - P_k \equiv a^{N_k}\mod p.\]
\end{lemma}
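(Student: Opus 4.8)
The plan is to prove $P_{k+1} - P_k \equiv a^{N_k} \pmod p$ by induction on $k \geq 1$, using the recursion $P_{k+1} = aP_k^D + 1$ together with the fact that $D = p^e$ forces the Frobenius-type identity $(x+y)^{p^e} \equiv x^{p^e} + y^{p^e} \pmod p$ to propagate through the orbit polynomials. First I would check the base case $k=1$: since $P_1 = 1$ and $P_2 = aP_1^D + 1 = a + 1$, we get $P_2 - P_1 = a = a^{N_1}$ (as $N_1 = (D-1)/(D-1) = 1$), so the claim holds modulo $p$ (indeed over $\Z$).

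For the inductive step, suppose $P_{k+1} - P_k \equiv a^{N_k} \pmod p$. Then, working in $\F_p[a]$, I would write
\[
P_{k+2} - P_{k+1} = \bigl(aP_{k+1}^D + 1\bigr) - \bigl(aP_k^D + 1\bigr) = a\bigl(P_{k+1}^D - P_k^D\bigr).
\]
Since $D = p^e$, reduction mod $p$ gives $P_{k+1}^D - P_k^D \equiv (P_{k+1} - P_k)^D \pmod p$ — this is the iterated Frobenius identity $x^{p^e} - y^{p^e} \equiv (x-y)^{p^e}$ valid in any commutative ring of characteristic $p$, applied with $x = P_{k+1}$, $y = P_k$. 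Combining with the inductive hypothesis,
\[
P_{k+2} - P_{k+1} \equiv a \cdot (P_{k+1} - P_k)^D \equiv a \cdot \bigl(a^{N_k}\bigr)^D = a^{1 + D N_k} = a^{N_{k+1}} \pmod p,
\]
where the last equality is exactly the recursion $1 + DN_k = N_{k+1}$ recorded just before the lemma. This closes the induction.

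There is no real obstacle here: the only thing to be careful about is that the Frobenius identity is being applied with exponent $p^e$ rather than $p$, but since the $e$-fold composition of the Frobenius endomorphism of $\F_p[a]$ is still a ring endomorphism, $x \mapsto x^{p^e}$ is additive (and multiplicative) mod $p$, so $(P_{k+1} - P_k)^{p^e} \equiv P_{k+1}^{p^e} - P_k^{p^e} \pmod p$ holds as stated. One could also note this mirrors the computations already carried out in \S\ref{sec:zcubeplusc} and \S\ref{eq:irreducibleQ} for the special cases $D=3$ and $D=2$. The degree bookkeeping via $N_k$ is consistent with Lemma \ref{lem:fk}, which guarantees $P_{k+1}$ and $P_k$ have degrees $N_k$ and $N_{k-1}$ respectively, so that $P_{k+1} - P_k$ has degree exactly $N_k$ — matching the leading term $a^{N_k}$ on the right-hand side.
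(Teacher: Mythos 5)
Your proof is correct and follows exactly the same route as the paper: induction on $k$, with the base case $P_2-P_1=a$ and the inductive step using $P_{k+2}-P_{k+1}=a(P_{k+1}^D-P_k^D)\equiv a(P_{k+1}-P_k)^D \pmod p$ together with $1+DN_k=N_{k+1}$. The extra remarks on the iterated Frobenius and degree bookkeeping are accurate but not needed beyond what the paper itself records.
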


\begin{proof}
We prove the result by induction on $k\geq 1$. 
For $k=1$, 
\[P_2-P_1 = a+1-1 = a =a^{N_1}. \]
Now, assume the property holds for some $k\geq 1$. Since $D=p^e$, 
\begin{align*}
P_{k+2} - P_{k+1} &= (aP_{k+1}^D +1) - (aP_k^D+1) \\
&= a\cdot (P_{k+1}^D-P_k^D) \equiv a\cdot (P_{k+1}-P_k)^D\mod p.\end{align*}
Thus, 
\[P_{k+2} - P_{k+1} \equiv a^{1+D N_k} \mod p\equiv a^{N_{k+1}} \mod p.\qedhere\]
\end{proof}

We conclude this section by the following observation due to Poonen. 

\begin{lemma}[Poonen]\label{lem:poonen}
For $m\neq n$, we have that  $\result(R_m,R_n)=\pm 1$. 
\end{lemma}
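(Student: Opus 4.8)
The plan is to follow the classical argument for the irreducibility of the Gleason-type polynomials $R_n$ and adapt it to show pairwise coprimality, working prime by prime. Recall that $P_n(a) = f_a^{\circ n}(0)$ and $R_n = \prod_{m|n} P_m^{\mu(n/m)}$, so that $P_n = \prod_{m|n} R_m$ and the roots of $R_n$ are exactly the parameters $a$ for which $0$ is periodic for $f_a$ with exact period $n$. Fix distinct positive integers $m\neq n$; without loss of generality $m < n$. I want to show $\result(R_m,R_n) = \pm 1$, equivalently that for every prime $\ell$, the reductions $R_m \bmod \ell$ and $R_n \bmod \ell$ share no common factor in $\F_\ell[a]$, equivalently no common root in $\overline{\F_\ell}$.

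First I would reduce to a statement about a single parameter value. Suppose $\bar a\in \overline{\F_\ell}$ is a common root of $R_m$ and $R_n$. Since the roots of $R_m$ are roots of $P_m$ and likewise for $R_n$, the orbit of $0$ under $\bar f(z) := \bar a z^D + 1$ satisfies $\bar f^{\circ m}(0) = 0$ and $\bar f^{\circ n}(0) = 0$. Hence $0$ is a periodic point of $\bar f$ whose period $r$ divides both $m$ and $n$; in particular $r \le m < n$, so $r$ is a proper divisor of $n$. The key algebraic input is Gleason's observation, already invoked in the excerpt: the discriminant of $P_n$ is $\equiv 1 \bmod D$, and more to the point, one shows that $P_n$ and $P_n' = \frac{d}{da}f_a^{\circ n}(0)$ generate the unit ideal modulo every prime not dividing $D$, while modulo primes $\ell \mid D$ one argues separately. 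Concretely, the standard lemma is that the critical orbit relation forces the multiplier $(\bar f^{\circ r})'$ at the cycle of $0$ to be controlled, and one deduces that $\bar a$ cannot be a root of $R_n$ once $0$ already has period $r<n$ — because the formal factorization $P_n = \prod_{s|n}R_s$ together with separateness of $P_n$ over $\overline{\F_\ell}$ shows the sets of roots of the various $R_s$ are disjoint.

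So the heart of the matter is: for each prime $\ell$, the polynomial $P_n$ has no repeated roots in $\overline{\F_\ell}[a]$, and therefore its factors $R_s$, $s\mid n$, have pairwise disjoint root sets; combined with the fact that any common root $\bar a$ of $R_m$ and $R_n$ would make $0$ periodic of some period $r \mid \gcd(m,n)$ — hence $\bar a$ a root of $R_r$ with $r$ a proper divisor of $n$ and of $m$ — this contradicts $\bar a$ being a root of $R_n$ (disjointness). The separateness of $P_n \bmod \ell$ for $\ell \nmid D$ follows from Gleason's argument (the resultant of $P_n$ and its derivative is a unit mod $\ell$); for $\ell \mid D$ one must check directly, using $P_{k+1} = aP_k^D + 1$ and the Frobenius collapse of $D$-th powers, that $P_n \bmod \ell$ is still separable — for instance when $D = p^e$, Lemma \ref{lem:fkfk} gives $P_{k+1}-P_k \equiv a^{N_k} \bmod p$, from which one reads off that the only possible repeated root mod $p$ is $a=0$, and $a=0$ is checked not to be periodic for $\bar f$ other than trivially.

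The main obstacle I expect is precisely the behaviour at the primes $\ell$ dividing $D$: the clean Gleason discriminant argument only delivers separateness away from $D$, and at the bad primes one has to rule out a coincidence of cycles ``by hand.'' The cleanest route is to combine two observations: (1) over $\overline{\F_\ell}$ with $\ell\mid D$, any repeated root of $P_n$ must be a common zero of $P_n$ and $P_n'$, and differentiating $P_{k+1}=aP_k^D+1$ shows $P_{k+1}' = P_k^D + aDP_k^{D-1}P_k' \equiv P_k^D \bmod \ell$, so $P_n' \equiv P_{n-1}^D \cdots$ telescopes to a power of $a$ times earlier critical-orbit values; (2) hence a repeated root forces $P_j(\bar a) = 0$ for some $j < n$, i.e. $0$ already has smaller period, which again lands us in a different $R_s$. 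Assembling these, separateness holds at every prime, disjointness of the root sets of the $R_s$ follows, and the resultant $\result(R_m,R_n)$, being a product over common roots in every characteristic of the appropriate local contributions, is forced to be a unit in $\Z$, hence $\pm 1$. I would present the argument in that order: factorization identity, reduction to a common root $\bar a$ in $\overline{\F_\ell}$, separateness of $P_n\bmod\ell$ for all $\ell$ (splitting into $\ell\nmid D$ via Gleason and $\ell\mid D$ via the derivative telescoping), disjointness of the $R_s$-root sets, and the final contradiction.
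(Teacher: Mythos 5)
Your reduction of the problem to showing that, for every prime $\ell$, the reductions $R_m\bmod\ell$ and $R_n\bmod\ell$ have no common root in an algebraic closure of $\F_\ell$ is legitimate (both polynomials are monic), and your observation that a common root forces $0$ to be periodic with period $r$ dividing $\gcd(m,n)$ is also fine. But the step you then lean on --- ``separateness holds at every prime,'' so that the root sets of the various $R_s\bmod\ell$, $s\mid n$, are pairwise disjoint --- is false, and you have moreover inverted which primes Gleason's computation controls. Gleason's observation is that $\mathrm{disc}(P_n)\equiv 1\bmod D$; this yields separability of $P_n\bmod\ell$ precisely for the primes $\ell$ \emph{dividing} $D$ (your derivative-telescoping argument for those primes is correct and is essentially the same computation), and says nothing at all about the primes $\ell\nmid D$. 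At those primes separability genuinely fails: for $D=2$ one has $P_3=a^3+2a^2+a+1$ with $\mathrm{disc}(P_3)=-23$, so $P_3\bmod 23$ has a repeated root. For composite $n$ the discriminant of $P_n$ has many such prime factors, and a repeated root of $P_n\bmod\ell$ could a priori be a common root of two distinct factors $R_s$, $R_{s'}$ --- ruling this out is exactly the content of the lemma, so at the decisive point your argument assumes what it is trying to prove.

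The paper's proof avoids characteristic-by-characteristic analysis entirely. From $P_{m+1}=aP_m^D+1=P_1+aP_m^D$ one gets by induction $P_{m+k}\equiv P_k\bmod{P_m^D}$, hence $P_{mn}\equiv P_m\bmod{P_m^D}$. Since $m<n$, the product $P_mR_n$ divides $P_{mn}$, so writing $AP_mR_n=P_{mn}=P_m+BP_m^D$ and dividing by $P_m$ gives the B\'ezout identity $AR_n-BP_m^{D-1}=1$ in $\Z[a]$. This shows $R_n$ and $P_m$ (hence $R_n$ and $R_m$) generate the unit ideal of $\Z[a]$, which forces $\result(R_m,R_n)=\pm1$ at once. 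If you want to salvage your local approach, you would need to replace the false separability claim by a direct proof that $R_n\bmod\ell$ and $P_r\bmod\ell$ are coprime for every proper divisor $r$ of $n$ and every prime $\ell$ --- and the cleanest way to do that is precisely the integral B\'ezout identity above.
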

\begin{proof} Assume $n>m$. 
It is not hard to see by induction on $k\geq 1$, that 
\[P_{m+k}\equiv P_k\mod{P_m^D}.\]
Indeed, $P_{m+1} = aP_m^D + 1 = P_1 + a P_m^D$ and if $P_{m+k} \equiv P_k\mod{P_m^D}$, then 
\[P_{m+k+1} = a P_{m+k}^D + 1 \equiv aP_k^D+1 \mod{P_m^D} \equiv P_k\mod{P_m^D}.\]
This implies that, $P_{m n}\equiv P_m \mod{P_m^D}$. Since $m<n$,  $P_m R_n$ divides $P_{mn}$. So, there are polynomials $A\in \Z[a]$ and $B\in \Z[a]$ such that 
\[ AP_m R_n = P_{mn} = P_m + B P_m^D.\]
Dividing by $P_m$ yields $AR_n-BP_m^{D-1} = 1$. It follows that $R_m$ and $R_n$ are relatively prime in $\Z[a]$ and $\result(R_m,R_n)=\pm 1$. 
\end{proof}

\subsection{When the critical point is preperiodic to a fixed point\label{sec:prefixedD}}

\noindent As a warm up, let us first prove the following proposition that is due to Vefa Goksel. Our proof differs significantly from the one given in \cite{goksel}.

\begin{proposition}
If $D$ is prime, then $R_{k,1}$ is irreducible over $\Q$ for all $k\geq 2$. 
\end{proposition}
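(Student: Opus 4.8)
We must show that $R_{k,1}\in \Z[a]$ is irreducible over $\Q$ when $D$ is prime. Recall that $R_{k,1} = R_{k,1,D}$ since $D$ is prime (its only divisor $d\geq 2$ is $D$ itself), and by definition
\[P_{k,1} = P_{k,1,D} = \frac{P_k^D - P_{k-1}^D}{P_k - P_{k-1}} = R_{k,1}\cdot P_{\gcd(1,k-1)}^{D-1} = R_{k,1},\]
using $P_1 = 1$. So it suffices to prove that $P_{k,1} = \sum_{i+j=D-1} P_k^i P_{k-1}^j$ is irreducible over $\Q$. The strategy is the one already foreshadowed by the Eisenstein-type arguments in \S\ref{sec:zcubeplusc} and \S\ref{eq:irreducibleQ}, now run in a more structured way: reduce modulo $p=D$, identify the reduction as a prime power, and control the constant term.

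\textbf{Step 1: the reduction mod $p$.} By Lemma \ref{lem:fkfk}, $P_k - P_{k-1}\equiv a^{N_{k-1}}\mod p$ (shifting the index in the lemma). Since $D=p$, we have $x^D \equiv x$ only in a weaker sense; rather, working in $\F_p[a]$,
\[P_k^D - P_{k-1}^D \equiv (P_k - P_{k-1})^D \equiv a^{D N_{k-1}}\mod p,\]
so that $P_{k,1} \equiv \dfrac{a^{D N_{k-1}}}{a^{N_{k-1}}} = a^{D N_{k-1} - N_{k-1}} = a^{(D-1)N_{k-1}}\mod p$. Thus $P_{k,1} \mod p$ is a power of $a$; equivalently $R_{k,1}\mod p = a^{(D-1)N_{k-1}}$ in $\F_p[a]$. (This is the $n=1$ case of Lemma \ref{lem:gknd}, with $R_1 = 1$ and $a$ playing the role of ``$R_n \mod p$''; I would cite Lemma \ref{lem:gknd} directly rather than redo the computation.)

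\textbf{Step 2: the constant term.} By Lemma \ref{lem:fk}, each $P_j$ has constant coefficient $1$, so $P_k(0) = P_{k-1}(0) = 1$, and the constant term of $P_{k,1} = \sum_{i+j=D-1}P_k^i P_{k-1}^j$ is $\sum_{i+j=D-1} 1 = D = p$. Since $p$ is prime, $p^2\nmid p$, so the constant term is divisible by $p$ exactly once.

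\textbf{Step 3: Eisenstein, plus monicity.} By Lemma \ref{lem:fk}, $P_k$ is monic of degree $N_{k-1}$, hence $P_{k,1}$ is monic (its top term is $P_k^{D-1}$, degree $(D-1)N_{k-1}$). Combining Steps 1--3: $R_{k,1}=P_{k,1}$ is monic, all non-leading coefficients vanish mod $p$ (Step 1), and the constant term is divisible by $p$ but not $p^2$ (Step 2). By the Eisenstein criterion at the prime $p$, $R_{k,1}$ is irreducible over $\Q$.

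\textbf{Main obstacle.} There is no serious obstacle here once Lemmas \ref{lem:fk} and \ref{lem:fkfk} are in hand; the only point requiring a little care is the passage from $P_k - P_{k-1}\equiv a^{N_{k-1}}$ to the full statement $P_{k,1}\equiv a^{(D-1)N_{k-1}}\mod p$, i.e. checking that dividing the congruence $P_k^D - P_{k-1}^D \equiv a^{DN_{k-1}}$ by $P_k - P_{k-1}$ (a polynomial with unit leading coefficient in $\F_p[a]$) is legitimate and yields exactly $P_{k,1}\mod p$ — which it is, since $P_{k,1}(P_k - P_{k-1}) = P_k^D - P_{k-1}^D$ is an honest polynomial identity over $\Z$. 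The genuinely substantive content (the structural Lemmas \ref{lem:resultgkmd} and \ref{lem:gknd}, and the upgrade to general prime powers $D=p^e$ and to periods $d\mid D$) lies beyond this warm-up proposition; for the proposition itself, the three steps above suffice.
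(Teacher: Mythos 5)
Your proof is correct and follows essentially the same route as the paper: identify $R_{k,1}=P_{k,1}$, use Lemma \ref{lem:fkfk} and the Frobenius congruence to show $P_{k,1}\equiv a^{(D-1)N_{k-1}}\bmod p$, observe via Lemma \ref{lem:fk} that the constant term is exactly $D$, and conclude by Eisenstein. The paper packages these same three steps as its Lemmas \ref{lem:fkp} and \ref{lem:fk1} followed by Eisenstein, so there is no substantive difference.
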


\begin{proof}
Our proof relies on the following two lemmas. 

\begin{lemma}\label{lem:fkp}
For $k\geq 2$ and $n\geq 1$, the polynomial $P_{k,n}$ has constant coefficient $D$ and is monic of degree $(D-1)N_{k+n-2}$.
\end{lemma}

\begin{proof}
According to Lemma \ref{lem:fk}, if $i+j=D-1$, the polynomial $P_{k+n-1}^i \cdot P_{k-1}^j$ has constant coefficient $1$ and is monic of degree 
\[i\cdot N_{k+n-2} + j\cdot N_{k-2} \leq  (D-1)N_{k+n-2}\]
with equality if and only if $i = D-1$ and $j=0$. There are $D$ pairs $(i,j)\in \N^2$ such that $i+j=D-1$. Only one pair contributes to the leading term. Thus the polynomial is monic. Every pair contributes to the constant coefficient, which therefore is equal to $D$. 
\end{proof}

\begin{lemma}\label{lem:fk1}
If $D$ is prime, then for all $k\geq 1$, 
\[R_{k,1} = P_{k,1} \equiv a^{(D-1)N_{k-1}}\mod D.\]
\end{lemma}

\begin{proof}
Assume $D$ is prime.  On the one hand, according to  Lemma \ref{lem:fkfk}: 
\begin{equation}\label{eq:fkfk}
P_k^D - P_{k-1}^D \equiv (P_k-P_{k-1})^D \mod D\equiv a^{DN_{k-1}}\mod D.
\end{equation}
On the other hand, by definition of $P_{k,1}$:
\[P_k^D - P_{k-1}^D = (P_k-P_{k-1})\cdot P_{k,1} \equiv a^{N_{k-1}} P_{k,1} \mod D.\]
As a consequence, 
\[a^{N_{k-1}} P_{k,1} \equiv a^{DN_{k-1}}\mod D\quad \text{so that}\quad P_{k,1} \equiv a^{(D-1)N_{k-1}}\mod D.\qedhere\]
\end{proof}

The proposition now follows from the Eisenstein criterion: $R_{k,1}$ is monic, $D$ divides all the coefficients except the one of the leading term, and $D^2$ does not divide the constant coefficient. 
\end{proof}

\subsection{The general case\label{sec:general}}

\noindent This section is devoted to the proof of Theorem \ref{theo}. We first prove Lemmas  \ref{lem:resultgkmd} and \ref{lem:gknd}.

\begin{proof}[Proof of Lemma \ref{lem:resultgkmd}]
Assume $d\geq 2$ divides $D\geq 2$, $k\geq 2$, $n\geq 1$ and $m\geq 1$. We need to show that
\[\result(R_{k,m,d},R_n) = \begin{cases}
±p^{\deg(R_n)} & \text{if }n=m\text{ and }d=p^e\text{ is a prime power}\\
±1 & \text{otherwise}.\end{cases}\]
The proof splits in several cases. 

\medskip
\noindent{\bf Case 1: $n$ does not divide $m$.}
Assume $\alpha$ is a root of $R_n$. Then, $P_{j_1}(\alpha) = P_{j_2}(\alpha)$ if and only if $j_1\equiv j_2\mod n$. 
Since $n$ does not divide $m$, for all $k\geq 2$, 
\[P_{k+m-1}(\alpha)  - P_{k-1}(\alpha)\neq 0\quad \text{and}\quad \alpha P_{k,m}(\alpha) = \frac{P_{k+m}(\alpha)-P_k(\alpha)}{P_{k+m-1}(\alpha) - P_{k-1}(\alpha)},\]
so that 
\[
\alpha^n \prod_{j=0}^{n-1} P_{k+j,m}(\alpha) = 1.
\]
The polynomial $R_n$ is monic with constant coefficient $1$. So, $\alpha$ is an algebraic unit. Thus, 
\[\prod_{j=0}^{n-1} \result(P_{k+j,m},R_n) = \prod_{j=0}^{n-1} \prod_{\alpha\in R_n^{-1}(0)} P_{k+j,m}(\alpha)  = \prod_{j=0}^{n-1} \prod_{\alpha\in R_n^{-1}(0)} \frac{1}{\alpha^n} = ±1.\]
Since $R_{k,m,d}$ divides $P_{k,m}$,  it follows that 
\[\result(R_{k,m},R_n)=±1.\]

\medskip 
\noindent{\bf Case 2: $n$ divides $m$.}
Set 
\[\nu :=\Phi_d(1,1) =  \begin{cases}
p&\text{if }d=p^e\text{ is a prime power}\\
1&\text{otherwise}.
\end{cases}\]
It is enough to prove that 
\begin{equation}\label{eq:resultgkpd}
\prod_{\ell |m} \result(R_{k,\ell,d},R_n) = ±\nu^{\deg(R_n)}.
\end{equation}
Indeed, assume Equation \eqref{eq:resultgkpd} holds. We have seen that $\result(R_{k,\ell ,d},R_n)=±1$ when $n$ does not divide $\ell$. So, for $m=n$,
\begin{align*}
 ±\nu^{\deg(R_n)} &= \result(R_{k,n,d},R_n)\cdot \prod_{\ell |n\atop \ell \neq n} \result(R_{k,\ell,d},R_n) \\
 &= ± \result(R_{k,n,d},R_n).\end{align*}
Now, if $n$ divides $m\neq n$, the polynomial $R_{k,n,d}\cdot R_{k,m,d}$ divides $P_{k,m,d}$; and
\[\result(R_{k,n,d}\cdot R_{k,m,d},R_n)=±\nu^{\deg(R_n)}\cdot \result(R_{k,m,d},R_n)\]
divides
\[\result(P_{k,m,d},R_n)  = ±\nu^{\deg(R_n)}.\]
This forces
\[\result(R_{k,m,d},R_n)=±1.\]
So, it is enough to prove that Equation \eqref{eq:resultgkpd} holds. 

\medskip 
\noindent{\bf Case 2.a:  $n$ does not divide $k-1$.}
Assume $\alpha$ is a root of $R_n$. Since $n$ divides $m$, we have that $P_{k+m-1}(\alpha) = P_{m-1}(\alpha)$ and 
\[P_{k,m,d}(\alpha) = \Phi_d\bigl(P_{k+m-1}(\alpha), P_{k-1}(\alpha) \bigr)=  P_{k-1}^{\varphi(d)}(\alpha)\cdot \Phi_d(1,1) = \nu  P_{k-1}^{\varphi(d)}(\alpha).\]
It follows that
\begin{align*}
\result(P_{k,m,d},R_n) &= \prod_{\alpha\in R_n^{-1}(0)} P_{k,m,d}(\alpha) \\
&= \nu^{\deg(R_n)} \cdot \prod_{\alpha\in R_n^{-1}(0)} P_{k-1}^{\varphi(d)}(\alpha) = \nu^{\deg(R_n)}\cdot \result(P_{k-1}^{\varphi(d)},R_n).
\end{align*}
Since $n$ does not divide $k-1$, Lemma \ref{lem:poonen} yields $\result(R_\ell,R_n)=±1$ for any divisor $\ell$ of $k-1$. Thus, 
\begin{align*}
\result(P_{k,m,d},R_n)  &= \nu^{\deg(R_n)}\cdot  \result(P_{k-1}^{\varphi(d)},R_n) \\
&=\nu^{\deg(R_n)}\cdot  \prod_{\ell|k-1} \bigl(\result(R_\ell,R_n)\bigr)^{\varphi(d)}=±\nu^{\deg(R_n)}.
\end{align*}
Equation \eqref{eq:resultgkpd} now follows from Equation \eqref{eq:fkpd}.

\medskip 
\noindent{\bf Case 2.b:  $n$ divides $k-1$.}
As in the proof of Lemma \ref{lem:poonen}, if $n$ divides $\ell$, then
\[P_\ell = P_n \mod{P_n^D} = P_n\cdot (1+H_\ell)\]
with $H_\ell\in \Z[a]$ divisible by $P_n$. It follows that 
\[P_{k,m,d} = \Phi_d\bigl(P_{k+m-1}, P_{k-1} \bigr)= P_n^{\varphi(d)}\cdot (\nu+H_{k,m,d}) \]
with $H_{k,m,d}\in \Z[a]$ divisible by $P_n$. Since $n$ divides $\gcd(m,k-1)$, Equation \eqref{eq:fkpd} yields
\[\left(\prod_{\ell|\gcd(m,k-1)\atop \ell \text{ does not divide }n} R_\ell ^{\varphi(d)}\right) \cdot \left(\prod_{\ell |m} R_{k,\ell,d} \right)= \nu +  H_{k,m,d}P_n^{D-1}\]
and since $\result(R_\ell ,R_n)=±1$ for $\ell\neq n$, we deduce that 
\begin{align*}
\prod_{\ell |m} \result(R_{k,\ell,d},R_n) &=\result(\nu+H_{k,m,d}P_n^{D-1},R_n) \\
&= \result(\nu,R_n) =  ±\nu^{\deg(R_n)}.
\end{align*}
This is Equation \eqref{eq:resultgkpd}. 

The proof of Lemma \ref{lem:resultgkmd} is completed
\end{proof}

\begin{proof}[Proof of Lemma \ref{lem:gknd}]
Assume $D=p^e$ is a prime power and $d\geq 2$ is a divisor of $D$. We need to show that for all $k\geq 2$, the polynomials $R_{k,1,d} \mod p$ are powers of $a\in \F_p[a]$; and for all $k\geq 2$ and $n\geq 2$, the polynomials $R_{k,n,d} \mod p$ are powers of $R_n\mod p$. Since  $R_{k,n,d}$ divides $R_{k,n}$ for all $n\geq 1$, it is enough to prove that for all $k\geq 2$, the polynomials $R_{k,1} \mod p$ are powers of $a\in \F_p[a]$; and for all $k\geq 2$ and $n\geq 2$, the polynomials $R_{k,n} \mod p$ are powers of $R_n\mod p$.

For $k\geq 2$, set $M_{k,1} := (D-1)N_{k-1}$ and for $n\geq 2$, set 
\[M_{k,n}:=\begin{cases}
(D-1)(D^{k-1}-1)&\text{if } n \text{ divides }k-1\\
(D-1)D^{k-1}&\text{if } n \text{ does not divide }k-1.\end{cases}\]
We shall prove that for $k\geq 2$ and $n\geq 2$,
\begin{equation}\label{eq:gknpowers}
R_{k,1} \equiv a^{M_{k,1}}\mod p \quad \text{and}\quad R_{k,n} \equiv R_n^{M_{k,n}}\mod p.
\end{equation}
Note that $N_{i+j} - N_i = D^i N_j$ for all integers $i\geq 0$ and $j\geq 0$. So, according to Lemma \ref{lem:fkfk}, if $k\geq 2$ and $n\geq 1$, 
\begin{align*}
P_{k+n-1} - P_{k-1} & \equiv a^{N_{k-1}} + a^{N_k} + \cdots + a^{N_{k+n-2}} \mod p \\
&  \equiv a^{N_{k-1}}\cdot \left(a^{D^{k-1}N_0}+a^{D^{k-1}N_1} + \cdots +a^{D^{k-1}N_{n-1}}\right)\mod p\\
&\equiv a^{N_{k-1}}\cdot \left(a^{N_0}+a^{N_1} + \cdots +a^{N_{n-1}}\right)^{D^{k-1}}\mod p \\
& \equiv a^{N_{k-1}} P_n^{D^{k-1}} \mod p.
\end{align*}
As a consequence, 
\[P_{k+n-1}^D - P_{k-1}^D \equiv a^{ DN_{k-1}} P_n^{D^k} \mod p\]
and 
\[P_{k,n} \equiv a^{(D-1)N_{k-1}} P_n^{D^k-D^{k-1}} \mod p\equiv a^{M_{k,1}} P_n^{(D-1)D^{k-1}} \mod p.\]
In particular, for $n=1$, this yields
\[R_{k,1} = P_{k,1} \equiv  a^{M_{k,1}}\mod p.\]
According to Equation \eqref{eq:buff},
\[\left(\prod_{m|\gcd(n,k-1)} R_m^{D-1}\right)\cdot \left( \prod_{m|n} R_{k,m} \right)= P_{k,n} \equiv  a^{M_{k,1}}\cdot \prod_{m|n}R_m^{(D-1)D^{k-1}}\mod p\]
and since $R_1=1$ and $R_{k,1} \equiv  a^{M_{k,1}}\mod p$, 
\[\prod_{m|n\atop m\neq 1} R_{k,m} = \prod_{m|n\atop m\neq 1}R_m^{M_{k,m}}\mod p.\]
Equation \eqref{eq:gknpowers} now follows from the Möbius inversion formula, completing the proof of Lemma \ref{lem:gknd}. 
\end{proof}

To complete the proof of Theorem \ref{theo}, we shall use the following generalization of the Eisenstein criterion.

\begin{lemma}\label{lem:eisenstein}
Assume $A\in \Z[a]$ and $B\in \Z[a]$ are monic polynomials and $p$ is a prime number such that 
\begin{itemize}
\item $A = B^N\mod p$ for some integer $N\geq 1$; 
\item the polynomial $B \mod p$ is irreducible over $\F_p$; 
\item $p^{2\deg(B)}$ does not divide $\result(A,B)$. 
\end{itemize}
Then, $A$ is irreducible over $\Q$. 
\end{lemma}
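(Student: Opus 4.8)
The plan is to establish a generalized Eisenstein criterion of the classical ``Newton polygon'' flavor, adapted to the situation where the reduction mod $p$ is a prime power $B^N$ rather than a single linear factor. The key idea is that irreducibility of $B\bmod p$ means the quotient ring $\F_p[a]/(B\bmod p)$ is a field $\F$, and the hypothesis $A\equiv B^N\bmod p$ tells us that $A$, viewed through this field, is ``concentrated at the place corresponding to $B$.'' The third hypothesis, that $p^{2\deg(B)}$ does not divide $\result(A,B)$, is the replacement for the classical condition that $p^2$ does not divide the constant term; it forces the $B$-adic valuation of $A$ to be exactly $1$ in a suitable sense.

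First I would suppose for contradiction that $A = A_1 \cdot A_2$ with $A_1, A_2 \in \Z[a]$ monic of positive degree (one may clear content and assume the factors are in $\Z[a]$ and monic since $A$ is monic). Reducing mod $p$ and using unique factorization in $\F_p[a]$ together with the irreducibility of $B\bmod p$, I get $A_1 \equiv B^{N_1}\bmod p$ and $A_2 \equiv B^{N_2}\bmod p$ with $N_1 + N_2 = N$ and $N_1, N_2 \geq 1$ (neither $N_i$ can be $0$, else that factor would be $\equiv 1$ and hence, being monic, trivial). Next I would compute $\result(A,B) = \result(A_1,B)\cdot \result(A_2,B)$ using multiplicativity of the resultant in the first argument. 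The point is to show that $p^{\deg(B)}$ divides each of $\result(A_1,B)$ and $\result(A_2,B)$; this gives $p^{2\deg(B)}\mid \result(A,B)$, contradicting the third hypothesis.

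To see $p^{\deg(B)}\mid \result(A_i,B)$, I would argue as follows. Since $A_i \equiv B^{N_i}\bmod p$ with $N_i\geq 1$, we have $B\bmod p$ divides $A_i\bmod p$ in $\F_p[a]$, so $A_i \in (p, B)$ as an ideal of $\Z[a]$; write $A_i = p\,U_i + B\,V_i$ for some $U_i, V_i\in \Z[a]$. Now $\result(A_i,B) = \prod_{\beta:\,B(\beta)=0} A_i(\beta)$ (using that $B$ is monic), and for each root $\beta$ of $B$ in $\overline\Q$ we have $A_i(\beta) = p\,U_i(\beta)$, which is a multiple of $p$ in the ring of integers of $\Q(\beta)$. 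Since there are $\deg(B)$ such roots (counted with multiplicity; $B\bmod p$ being irreducible and separable forces $B$ separable), the product is divisible by $p^{\deg(B)}$ as an algebraic integer, and being a rational integer it is divisible by $p^{\deg(B)}$ in $\Z$. This yields the claim and closes the argument.

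I expect the main obstacle to be the clean bookkeeping of the resultant divisibility — in particular making rigorous that ``$A_i(\beta)$ is a multiple of $p$ for each root $\beta$ of $B$, hence the product of these $\deg(B)$ values is divisible by $p^{\deg(B)}$'' — since one must be careful that the $\deg(B)$ roots are genuinely distinct (which follows from $B\bmod p$ being irreducible, hence $B$ having nonzero discriminant mod $p$, hence $B$ separable) and that divisibility by $p^{\deg(B)}$ of an algebraic integer which happens to be a rational integer implies divisibility in $\Z$ (clear, since $p^{\deg(B)}$ is rational and the quotient is an algebraic integer that is also rational, hence in $\Z$). Once this is in place, the contradiction with the third hypothesis is immediate, and the monic-reduction step together with unique factorization in $\F_p[a]$ is routine.
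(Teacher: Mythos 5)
Your proposal is correct and follows essentially the same route as the paper: contradiction via a factorization $A=A_1A_2$, unique factorization in $\F_p[a]$ to get $\overline{A_i}=\overline{B}^{N_i}$ with $N_i\geq 1$, and multiplicativity of the resultant to extract a factor of $p^{\deg(B)}$ from each $\result(A_i,B)$. The only cosmetic difference is that the paper writes $A_i=B^{N_i}+pC_i$ and concludes $\result(A_i,B)=\result(pC_i,B)=p^{\deg(B)}\result(C_i,B)$ directly, which sidesteps your worries about separability and about algebraic versus rational integers (neither of which is actually needed, since roots may be counted with multiplicity and $\prod_\beta C_i(\beta)=\pm\result(B,C_i)\in\Z$).
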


\begin{proof}
Assume by contradiction that $A$ is reducible over $\Q$, so that $A = A_1 A_2$ with $A_1\in \Z[a]$ and $A_2\in \Z[a]$ non constant. 
Let $\overline A_1$, $\overline A_2$ and $\overline B$ be the reductions of the polynomials modulo $p$. 
Then, $\overline A_1\overline A_2 = \overline B^N$ and since $\overline B$ is irreducible over $\F_p$, we have that $\overline A_1 = \overline B^{N_1}$ and $\overline A_2 = \overline B^{N_2}$ for some positive integers $N_1\geq 1$ and $N_2\geq 1$. In other words, $A_1 = B^{N_1} + pC_1$ and $A_2 = B^{N_2}+pC_2$ for some polynomials $C_1\in \Z[a]$ and $C_2\in \Z[a]$. In that case, 
\begin{align*}\result(A,B) = \result(A_1A_2,B) & = \result(A_1,B)\cdot \result(A_2,B) \\
&= \result(pC_1,B)\cdot \result(pC_2,B) \\
&= p^{2\deg(B)}\result(C_1C_2,B).\end{align*}
This contradicts the assumption that $p^{2\deg(B)}$ does not divide $\result(A,B)$. 
\end{proof}

We may now complete the proof of Theorem \ref{theo}. Assume $D=p^e$ is a prime power and $d\geq 2$ is a divisor of $D$. Then $d$ is a power of $p$. 

According to Lemma \ref{lem:gknd}, the polynomial $R_{k,1,d}\mod p$ is a power of $a\in\F_p[a]$, which is irreducible over $\F_p$; and according to Lemma \ref{lem:resultgkmd}, $p^{2\deg(R_n)}$ does not divide $\result(R_{k,1,d},R_1) = ±p^{\deg(R_n)}$. It follows from Lemma \ref{lem:eisenstein} that $R_{1,k,d}$ is irreducible over $\Q$ for all $k\geq 2$. 

Similary, according to Lemma \ref{lem:gknd}, if $n\geq 2$, the polynomial $R_{k,n,d}\mod p$ is a power of $R_n\mod p$; and according to Lemma \ref{lem:resultgkmd}, $p^{2\deg(R_n)}$ does not divide $\result(R_{k,n,d},R_n) = ±p^{\deg(R_n)}$. It follows from Lemma \ref{lem:eisenstein} that when $R_n\mod p$ is irreducible over $\F_p$, the polynomial $R_{k,n,d}$ is irreducible over $\Q$ for all $k\geq 2$. 

This completes the proof of Theorem \ref{theo}.

\subsection{Particular cases\label{app:examples}}

\noindent For small values of $k$ and $n$, the expression of $R_{k,n,d}$ is quite simple and we may obtain irreducibility as follows. 

\begin{proposition}
For all $D\geq 2$ and all $d$ that divide $D$, the polynomial $R_{2,1,d}$ is irreducible over $\Q$. 
\end{proposition}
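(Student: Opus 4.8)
The plan is to compute $R_{2,1,d}$ explicitly and apply the Eisenstein criterion directly. First I would recall that, by the definitions in this section, $P_1 = 1$ and $P_2 = a+1$, so that the relevant iterates near the bottom of the critical orbit are very simple. For $k=2$ and $n=1$ we have $\gcd(n,k-1) = \gcd(1,1) = 1$, and since $P_{\gcd(n,k-1)} = P_1 = 1$, Equation~\eqref{eq:fkpd} degenerates to $P_{2,1,d} = R_{2,1,d}$; that is, there are no spurious factors to remove and $R_{2,1,d} = \Phi_d(P_2, P_1) = \Phi_d(a+1, 1)$.

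Next I would identify this cyclotomic-type expression. Writing $\Phi_d(X,Y) = \prod_{\omega\in\Omega_d}(X-\omega Y)$ for the homogenized $d$-th cyclotomic polynomial, we get $R_{2,1,d} = \Phi_d(a+1,1) = \Phi_d(a+1)$, the (dehomogenized) $d$-th cyclotomic polynomial evaluated at $a+1$. Since $\Phi_d(X)$ is irreducible over $\Q$ (this is the classical irreducibility of cyclotomic polynomials), and since $a \mapsto a+1$ is an invertible affine change of variable over $\Q$, the polynomial $\Phi_d(a+1)$ is also irreducible over $\Q$. This immediately yields the claim.

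Alternatively, staying closer in spirit to the rest of the paper, I would verify irreducibility via Lemma~\ref{lem:eisenstein} together with Lemmas~\ref{lem:gknd} and~\ref{lem:resultgkmd}: but those require $D = p^e$, whereas the proposition is stated for all $D\geq 2$, so I would not take that route. Instead, if one prefers an Eisenstein-style argument valid for general $D$, note that $d\mid D$ so write $d = q^f \ell$ with $q$ prime, $q\nmid \ell$; when $d$ is a prime power $q^f$ one checks $\Phi_d(a+1) \equiv \Phi_d(1) + \text{(higher order in }a) = q \mod (a^2)$ after reduction, and $\Phi_d(a+1)$ reduces mod $q$ to a power of $a$ (since $\Phi_{q^f}(X) \equiv (X-1)^{\varphi(q^f)} \mod q$), with constant term $\Phi_d(1) = q$, not divisible by $q^2$ — so Eisenstein at $q$ applies. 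When $d$ is not a prime power, $\Phi_d(1) = 1$, and one falls back on the classical cyclotomic irreducibility. The cleanest exposition, though, is simply to invoke that $\Phi_d$ is irreducible over $\Q$ and that $R_{2,1,d} = \Phi_d(a+1)$ is its translate.

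The main (and only) obstacle is the bookkeeping at the start: confirming that for $k=2$, $n=1$ the correction factor $P_{\gcd(n,k-1)}^{\varphi(d)}$ in~\eqref{eq:fkpd} is trivial, so that $R_{2,1,d}$ really is exactly $\Phi_d(a+1)$ rather than a proper divisor of it. Once that identification is in hand, irreducibility is a one-line consequence of the irreducibility of cyclotomic polynomials.
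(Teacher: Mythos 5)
Your proposal is correct and follows essentially the same route as the paper: identify $R_{2,1,d}=\Phi_d(a+1,1)$ (the paper leaves the bookkeeping with $P_{\gcd(1,1)}=P_1=1$ implicit, which you spell out) and conclude from the irreducibility of cyclotomic polynomials, which is preserved under the translation $a\mapsto a+1$. The alternative Eisenstein discussion is unnecessary but does no harm.
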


\begin{proof}
We have that 
\[R_{2,1,d} = \Phi_d(a+1,1).\]
Since cyclotomic polynomials are irreducible over $\Q$, so is $R_{2,1,d}$.
\end{proof} 

\begin{proposition}
For all $D\geq 2$ even, the polynomial $R_{3,1,2}$ is irreducible over $\Q$. 
\end{proposition}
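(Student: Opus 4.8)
The plan is to make $R_{3,1,2}$ explicit, reduce to ruling out integer factorizations, and then analyse the polynomial $2$-adically. Since $\gcd(1,k-1)=\gcd(1,2)=1$, the defining formula for $R_{k,n,d}$ gives $R_{3,1,2}=P_{3,1,2}=\Phi_2(P_3,P_2)=P_3+P_2$, and with $P_2=a+1$, $P_3=aP_2^D+1$ this is
\[R_{3,1,2}(a)=a(a+1)^D+a+2\in\Z[a],\]
monic of degree $D+1$ with constant coefficient $2$. By Gauss's lemma it suffices to prove $R_{3,1,2}$ has no factorization $R_{3,1,2}=f\cdot g$ with $f,g\in\Z[a]$ monic non-constant. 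Write $D=2^su$ with $s\ge1$ and $u$ odd; the case $u=1$ ($D$ a power of $2$) is the prime-power case already covered by Theorem \ref{theo}, so assume $u\ge3$.

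First I would analyse the $2$-adic Newton polygon of $R_{3,1,2}$. Its coefficient of $a^i$ equals $2$ for $i\in\{0,1\}$, equals $\binom{D}{i-1}$ for $2\le i\le D$, and equals $1$ for $i=D+1$; by Kummer's theorem $v_2\binom{D}{j}$ counts the carries in the base-$2$ addition $j+(D-j)$, so it is positive unless the binary digits of $j$ form a submask of those of $D$, and the least positive submask of $D$ is $2^s$. Hence the polygon has exactly two edges, of slope $-\tfrac1{2^s+1}$ and horizontal length $2^s+1$, and of slope $0$ and horizontal length $D-2^s$. As the first slope in lowest terms has denominator equal to its horizontal length, $R_{3,1,2}=G_1\cdot G_2$ over $\Q_2$ with $G_1,G_2\in\Z_2[a]$ monic, $G_1$ irreducible over $\Q_2$ of degree $2^s+1$ with all roots of valuation $\tfrac1{2^s+1}$ (so $G_1\equiv a^{2^s+1}\bmod 2$), and $G_2$ of degree $D-2^s$ with all roots $2$-adic units. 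Also $R_{3,1,2}\equiv a((a+1)^D-1)\equiv a^{2^s+1}h^{2^s}\bmod 2$ where $h:=\frac{(a+1)^u-1}{a}\bmod 2$ is squarefree of degree $u-1$ with $h(0)=1$, so $G_2\equiv h^{2^s}\bmod 2$.

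Next I would identify the $\Q_2$-factorization of $G_2$. If $\rho$ is a root of $G_2$, then $\rho+1$ reduces to a primitive $u$-th root of unity, so $(\rho+1)^u=1+w$ with $v(w)>0$ (normalise $v$ to extend $v_2$, $v(2)=1$); and $\rho(\rho+1)^D=-(\rho+2)$ gives $(\rho+1)^D-1=-2(\rho+1)/\rho$, hence $v((\rho+1)^D-1)=1$. Expanding $(1+w)^{2^s}-1=\sum_{i\ge1}\binom{2^s}{i}w^i$ and using $v_2\binom{2^s}{i}=s-v_2(i)$, the terms with $i<2^s$ have valuation $>1$ while the $i=2^s$ term has valuation $2^sv(w)$, so $2^sv(w)=1$; thus $v(w)=\tfrac1{2^s}$ and $2^s\mid e(\Q_2(\rho)/\Q_2)$. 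Together with the residue-degree bound from $G_2\equiv h^{2^s}\bmod 2$ and the fact that the degrees of the $\Q_2$-irreducible factors of $G_2$ sum to $D-2^s=2^s\deg h$, this yields $G_2=\prod_j G_2^{(j)}$ with each $G_2^{(j)}$ irreducible over $\Q_2$ of degree $2^s\deg q_j$, where $h=\prod_j q_j$ is the factorization into distinct irreducibles over $\F_2$. So the irreducible factors of $R_{3,1,2}$ over $\Q_2$ are $G_1$ (degree $2^s+1$) and the $G_2^{(j)}$ (each of degree divisible by $2^s$, all roots units).

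Now suppose $R_{3,1,2}=f\cdot g$ over $\Z$ with $f,g$ monic non-constant; over $\Q_2$ each factor is a product of some of $G_1,G_2^{(1)},\dots$, and since $G_1$ is $\Q_2$-irreducible we may assume $G_1\mid f$, so $g=\prod_{j\in S}G_2^{(j)}$ for a nonempty $S$. Then $2^s\mid\deg g$, for every root $\beta$ of $g$ the elements $\beta,\beta+1,\beta+2$ are $2$-adic units, and $R_{3,1,2}(0)=2$, $R_{3,1,2}(-1)=1$, $R_{3,1,2}(-2)=-2$ force $g(0),g(-1),g(-2)\in\{\pm1\}$, with symmetric constraints on $f$. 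The main obstacle is to exclude every surviving combination, i.e. to show that no non-trivial sub-product of $\{G_1\}\cup\{G_2^{(j)}\}_j$ has rational coefficients. I would attack this by induction on the number of prime divisors of $u$ — the splitting of $G_2$ over $\Q_2$ into several blocks reflecting reducibility of $h$ over $\F_2$ — comparing $R_{3,1,2}$ with the analogous polynomial attached to a proper divisor of $D$, so as to reduce to the base case $h$ irreducible over $\F_2$, i.e. $u$ prime with $2$ a primitive root modulo $u$; there the only remaining possibility is $R_{3,1,2}=G_1\cdot G_2$ with both factors rational, $G_1$ then being a degree-$(2^s+1)$ factor that is Eisenstein at $2$, and excluding this last case (for instance by inspecting $R_{3,1,2}$ at a second prime depending on $u$) is where I expect the real work to lie.
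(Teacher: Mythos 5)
Your reduction to the explicit polynomial $R_{3,1,2}=a(a+1)^D+a+2$ is correct, and your $2$-adic analysis (the Newton polygon with edges of slopes $-\tfrac1{2^s+1}$ and $0$, the congruence $R_{3,1,2}\equiv a^{2^s+1}h^{2^s}\bmod 2$, and the ramification forced on the unit roots) is plausible as far as it goes. But it does not prove the proposition, and you say so yourself: the whole difficulty is that $R_{3,1,2}$ genuinely factors over $\Q_2$ into the pieces $G_1,G_2^{(1)},\dots$, so purely local information at $p=2$ can never exclude a rational factorization that distributes these pieces between two monic integer factors. The two devices you propose to close this gap --- an induction on the number of prime factors of $u$ ``comparing $R_{3,1,2}$ with the analogous polynomial attached to a proper divisor of $D$'', and an inspection ``at a second prime depending on $u$'' --- are not carried out, and neither is it clear how they would work: there is no evident relation between the factors of $R_{3,1,2}$ for $D$ and for a proper divisor of $D$, and no candidate second prime is identified. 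The values $g(0),g(\pm1),g(-2)\in\{\pm1\}$ you extract are far too weak to rule out the remaining combinations. As written, this is a research plan with an admitted hole at the decisive step, not a proof.

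The missing idea is much simpler and entirely bypasses the local analysis. Substituting $b:=a+1$ gives
\[R_{3,1,2}=a(a+1)^D+1+(a+1)=(b-1)b^D+b+1=b^{D+1}-b^D+b+1,\]
a quadrinomial all of whose coefficients lie in $\{-1,0,1\}$. The paper then simply invokes Theorem 2 of Finch and Jones on the irreducibility of $\{-1,0,1\}$-quadrinomials, which applies for every even $D=2d$ with $d\geq 1$. So the correct move is a linear change of variable plus a citation, rather than Newton polygons; if you want a self-contained argument you would need to reprove the Finch--Jones criterion, but the structure of their proof (locating the roots on or near the unit circle) is quite different from the $2$-adic route you sketch.
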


\begin{proof}
Setting $b:=a+1$, we have that
\[R_{3,1,2} =\Phi_2(P_3,P_2) =  P_3+P_2 = a(a+1)^D+1+(a+1) = b^{2d+1} - b^{2d}+b+1.\]
By \cite[Theorem 2]{finch}, this quadrinomial is irreducible for all $d\geq 1$. 
\end{proof}

\begin{proposition}
For all $D\geq 2$ even, the polynomial $R_{2,2,2}$ is irreducible over $\Q$. 
\end{proposition}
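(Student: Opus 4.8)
The plan is to reduce $R_{2,2,2}$ to an explicit polynomial and then apply the Eisenstein criterion at the prime $2$.

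First I would compute $P_{2,2,2}$. Since $\Phi_2(X,Y) = X+Y$ and $P_1 = 1$, $P_3 = a(a+1)^D+1$, we get $P_{2,2,2} = \Phi_2(P_3,P_1) = P_3+P_1 = a(a+1)^D+2$. Applying Equation \eqref{eq:fkpd} with $k=n=d=2$, so that $\gcd(n,k-1)=1$, $\varphi(2)=1$, and $P_1=1$, yields $P_{2,2,2} = R_{2,1,2}\cdot R_{2,2,2}$. The same equation with $k=2$, $n=1$, $d=2$ gives $R_{2,1,2} = \Phi_2(P_2,P_1) = (a+1)+1 = a+2$. Therefore
\[R_{2,2,2} = \frac{a(a+1)^D+2}{a+2},\]
a monic polynomial of degree $D$; the division is exact precisely because $D$ is even, since substituting $a=-2$ gives $(-2)(-1)^D+2 = 0$ exactly when $(-1)^D=1$.

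Next I would make the change of variable $b := a+1$, under which $R_{2,2,2}$ becomes $Q(b) := \dfrac{b^{D+1}-b^D+2}{b+1}$. Writing $b^{D+1}-b^D+2 = (b+1)Q(b)$ with $Q(b) = b^D + c_{D-1}b^{D-1}+\cdots+c_1 b + c_0$ and comparing coefficients gives $c_0 = 2$, the recursion $c_{j-1} = -c_j$ for $1\le j\le D-1$, and the top relation $c_{D-1}+1 = -1$, i.e.\ $c_{D-1}=-2$, which is consistent since $D-1$ is odd. Hence $c_i = 2(-1)^i$ for $0\le i\le D-1$, so every non-leading coefficient of $Q$ is $\pm 2$ and the constant coefficient is $2$.

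Finally, the Eisenstein criterion at $p=2$ applies to $Q$: the leading coefficient $1$ is not divisible by $2$, every lower coefficient $\pm 2$ is divisible by $2$, and the constant coefficient $2$ is not divisible by $4$. Thus $Q$ is irreducible over $\Q$, and since $a\mapsto a+1$ is a ring automorphism of $\Q[a]$, so is $R_{2,2,2}$. I do not expect a genuine obstacle here; the only care needed is in the bookkeeping — correctly identifying $R_{2,1,2}=a+2$, checking exactness of the division (which is where $D$ even enters), and the coefficient computation for $Q$ — after which Eisenstein closes the argument at once.
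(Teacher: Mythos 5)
Your proof is correct and follows essentially the same route as the paper's: both identify $R_{2,2,2}=\bigl(a(a+1)^D+2\bigr)/(a+2)$, substitute $b=a+1$ to get $(b^{D+1}-b^D+2)/(b+1)=b^D-2b^{D-1}+2b^{D-2}-\cdots-2b+2$, and conclude by Eisenstein at $p=2$. Your extra bookkeeping (exactness of the division via $a=-2$, the coefficient recursion) is sound and merely makes explicit what the paper leaves implicit.
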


\begin{proof}
Assume $D=2d$ is even. Then setting $b=a+1$ as previously, 
\begin{align*}
R_{2,2,2} = \frac{\Phi_2(P_3,P_1)}{\Phi_2(P_2,P_1)} & = \frac{P_3+P_1}{P_2+P_1} \\
&= \frac{a(a+1)^D+2}{a+2} \\
&= \frac{b^{2d+1}- b^{2d}+2}{b+1} = b^{2d} - 2 b^{2d-1} + 2b^{2d-2} -\cdots -2b+2.
\end{align*}
According to the Eisenstein criterion, this polynomial is irreducible over $\Q$. 
\end{proof} 

\subsection{Irreducibility over $\F_p$\label{app:irreduc}}

\noindent Here, $D=p^e$ is a prime power. In this appendix, we shall work over the field $\F_p$ or its algebraic closure $\overline \F_p$. 
With an abuse of notation, we shall keep the notation $P_n$ and $R_n$ for their reductions modulo $p$. In other words, 
$P_n \in \F_p[a]$ and $R_n\in \F_p[a]$ are defined by 
\[P_n := \sum_{k=0}^{n-1} a^{N_k} \quad\text{with}\quad N_k := \frac{D^k-1}{D-1}
\quad \text{and}\quad R_n := \prod_{m|n} P_m^{\mu(n/m)}.\]
We study the irreducibility of $R_n$ over $\F_p$. Note that 
\[R_1 = 1\quad \text{and}\quad R_2 = a+1.\]
So, we shall restrict our study to the case $n\geq 3$. 

\begin{proposition}
Assume $D=p^e$ is a prime power and $n\geq 3$. Then, the polynomial $R_n\in \F_p[a]$ is irreducible over $\F_p$ if and only if either $n=3$ and $D=2$, or $n=3$ and $D=8$. 
\end{proposition}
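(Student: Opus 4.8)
The plan is to analyze the polynomial $P_n = \sum_{k=0}^{n-1}a^{N_k}\in\F_p[a]$ and its splitting behavior over $\overline\F_p$, then translate information about $R_n$ back. A natural first move is to relate $P_n$ to the dynamics of $a\mapsto f_a(0)$ over $\F_p$ already exploited in the body: a root $\alpha\in\overline\F_p$ of $R_n$ is exactly a parameter for which $0$ is periodic of exact period $n$ for $f_\alpha(z)=\alpha z^D+1$. Since $D=p^e$, the map $f_\alpha$ is purely inseparable up to the affine part, so $f_\alpha^{\circ n}(z) = \alpha^{1+D+\cdots+D^{n-1}} z^{D^n} + (\text{lower order}) = \alpha^{N_n} z^{D^n}+\cdots$, and the condition $f_\alpha^{\circ n}(0)=0$ is a polynomial identity that one can make completely explicit. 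I would first record the factorization $P_n = \prod_{m\mid n} R_m$ and the degree count $\deg R_n = \sum_{m\mid n}\mu(n/m)N_{m-1}$, and note $\deg P_n = N_{n-1} = 1+D+\cdots+D^{n-2}$.

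Next I would handle the "only if" direction by exhibiting, for every $n\ge 3$ outside the two exceptional cases, a nontrivial factorization of $R_n$ over $\F_p$. The cleanest mechanism is the action of $\Gal(\overline\F_p/\F_p)=\langle\mathrm{Frob}_p\rangle$ on the roots: $R_n$ is irreducible over $\F_p$ iff $\mathrm{Frob}_p$ acts transitively on its $\deg R_n$ roots, i.e. iff the orbit of any single root $\alpha$ has size $\deg R_n$, equivalently $\alpha$ generates $\F_{p^{\deg R_n}}$. So the strategy is to show that for $n\ge 3$ with $(n,D)\notin\{(3,2),(3,8)\}$, some root $\alpha$ of $R_n$ lies in a strictly smaller field $\F_{p^d}$ with $d\mid \deg R_n$, $d<\deg R_n$. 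One source of such roots: fixed points of Frobenius-type substitutions coming from the semiconjugacy $f_\alpha\circ(\text{scaling}) $ — more concretely, when $D=p^e$ the polynomial identity defining periodic orbits is invariant under replacing $\alpha$ by $\alpha^p$ composed with a root-of-unity twist, which forces the roots to organize into blocks. The main arithmetic input will be a direct count: compare $\deg R_n$ (which grows like $D^{n-1}$) against the largest possible orbit sizes, and check by hand that transitivity can hold only for the listed small cases. For $n=3$: $\deg R_3 = N_2 = D+1$, and one computes $R_3 = P_3 = aP_2^D+1 = a(a+1)^D+1$; I would analyze when $a(a+1)^D+1$ is irreducible over $\F_p$ for $D=p^e$ — substituting $b=a+1$ gives $b^{D+1}-b^D+\cdots$ (as in \S\ref{app:examples}), and irreducibility of a binomial-like polynomial over $\F_p$ is governed by a standard criterion on $D+1$ relative to the multiplicative order of $p$; this pins down exactly $D=2$ ($b^3+b+1$, degree $3$, irreducible over $\F_2$) and $D=8$ ($b^9+b+1$ over $\F_2$, which one checks is irreducible), and excludes all other prime powers.

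For the "if" direction one just verifies the two cases: for $(n,D)=(3,2)$, $R_3=a^3+a+1\in\F_2[a]$ has no root in $\F_2$ and is not a product of two lower-degree factors (degree $3$, no linear factor $\Rightarrow$ irreducible); for $(n,D)=(3,8)$, $R_3 = a\cdot(a+1)^8+1 \equiv a^9+a+1 \pmod 2$, and one checks directly that $a^9+a+1$ has no roots in $\F_2$, shares no factor with $a^2+a+1$ or with $a^3+a+1$ or $a^3+a^2+1$ (the irreducible quadratic and cubics over $\F_2$), hence is irreducible over $\F_2$; this is a finite computation. The main obstacle I anticipate is the "only if" direction for composite $n$ and for $n=3$ with large $D$: proving non-irreducibility uniformly requires identifying the correct Frobenius-invariant sub-structure on the roots (or the correct factorization of $a(a+1)^D+1$), rather than checking finitely many cases. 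I would attack it by combining the multiplicative-order criterion for binomials in the $n=3$ case with a reduction (via the structure $P_n = \prod_{m\mid n}R_m$ and Lemma \ref{lem:poonen}) showing that for $n>3$ the degree $\deg R_n$ is too large to be a single Frobenius orbit given the constraints imposed by the purely inseparable dynamics.
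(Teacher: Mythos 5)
Your overall framing --- reduce irreducibility to transitivity of the Frobenius orbit on the roots and compare $\deg R_n$ against the maximal possible orbit size --- is exactly the paper's strategy, but the one computation that makes it work is missing from your proposal, and it is the heart of the proof. The paper shows that if $P_n(\alpha)=0$ then
\[
1 \;=\; 1+\alpha P_n(\alpha)^D \;=\; 1+\alpha P_n(\alpha^D)\;=\;P_n(\alpha)+\alpha^{N_n}\;=\;\alpha^{N_n},
\]
whence $\alpha^{D^n}=\alpha\cdot\bigl(\alpha^{N_n}\bigr)^{D-1}=\alpha$, i.e.\ every root of $R_n$ lies in $\F_{p^{ne}}$ and its Frobenius orbit has size dividing $n\cdot e$. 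This is the ``constraint imposed by the purely inseparable dynamics'' that you invoke but never derive; without it you have no upper bound on orbit sizes, so the comparison with $\deg R_n\geq D^{n-2}$ (note: the degree is of order $D^{n-2}$, not $D^{n-1}$) cannot be carried out, and the ``only if'' direction does not close. Once the bound $\deg R_n\mid n e$ is in hand, an elementary estimate ($p^{(n-2)e}\leq ne$) kills all cases except $(n,p)\in\{(3,2),(3,3),(4,2)\}$, and the remaining three are settled by the same divisibility: for $n=3$, $p=2$ one needs $2^e+1\mid 3e$, which holds only for $e=1,3$.

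Two further points. First, your fallback for $n=3$ is not viable as stated: since $D=p^e$, one has $R_3=a(a+1)^D+1\equiv a^{D+1}+a+1 \pmod p$, which is a \emph{trinomial}, and there is no ``standard multiplicative-order criterion'' of the binomial type governing its irreducibility; the paper does not analyze this polynomial directly but again uses the orbit bound. Second, in your verification for $D=8$, checking that $a^9+a+1$ has no irreducible factor of degree $\leq 3$ over $\F_2$ does not suffice for a degree-$9$ polynomial (it could split as an irreducible quartic times a quintic); you must also exclude the three irreducible quartics. The ``if'' direction is otherwise a correct finite check, and your treatment of $D=2$ is fine.
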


\begin{proof}
Let $f:\overline\F_p\to \overline \F_p$ be the Frobenius automorphism $x\mapsto x^p$. 

\begin{lemma}
If $\alpha\in \overline \F_p$ is a root of $R_n$, then $\alpha$ is a periodic point of $f$ of period dividing $n\cdot e$.  
\end{lemma}

\begin{proof}
Assume $\alpha$ is a root of $R_n$. Then, $P_n(\alpha)=0$, so that
\begin{align*}
1 = 1+\alpha P_n^D(\alpha) &= 1+\alpha P_n(\alpha^D)  \\
& = 1 +  \sum_{k=0}^{n-1} \alpha^{1+DN_k}   \\
&=1+ \sum_{k=0}^{n-1}\alpha^{N_{k+1}}= P_n(\alpha) + \alpha^{N_n} =  \alpha^{N_n} .\end{align*}
It follows that 
\[f^{\circ (n\cdot e)}(\alpha) = \alpha^{D^n} = \alpha^{1+(D-1)N_n} = \alpha\cdot \left(\alpha^{N_n}\right)^{D-1} = \alpha.\qedhere\]
\end{proof}

As a consequence, if $R_n$ is irreducible over $\F_p$, then the degree of $R_n$ divides $n\cdot e$. The degree of $R_n$ is
\[\deg(R_n) = \sum_{m|n} \mu\left(\frac{n}{m}\right) \deg(P_m) =  \sum_{m|n} \mu\left(\frac{n}{m}\right)  N_{m-1} \geq D^{n-2}.\]
So, if $R_n$ is irreducible over $\F_p$, then $p^{(n-2)e}\leq n\cdot e.$ 

Set $\kappa:=(n-2)\log(p)>0$. The function $(0,+\infty) \ni x\mapsto {\rm exp}(\kappa x)/x\in (0,+\infty)$ reaches a minimum at $x = 1/\kappa$ with value $\kappa \cdot{\rm exp}(1)$. 
It follows that for $n\geq 3$, 
\[\frac{p^{(n-2)e}}{n\cdot e}\geq \left(1-\frac{2}{n}\right) \log (p)\exp(1).\]
If $n\geq 3$ and $p\geq 5$, or if $n\geq 4$ and $p=3$, or if $n\geq 5$ and $p=2$, this is greater than $1$. 
So, it is enough to study the following cases. 

\medskip
\noindent{\bf Case $n=3$ and $p=2$.}
In that case, for $e\geq 1$, 
\[\deg(R_n) = 1+D = 2^e+1\quad \text{and}\quad n\cdot e = 3e.\] 
The function $(0,+\infty) \ni x\mapsto (2^x+1)/(3x) \in (0,+\infty)$ is increasing on $[2,+\infty)$ and takes the values $1$ at $x=1$, $5/6$ at $x=2$ and $1$ at $x=3$. It follows that $\deg(R_n)$ divides $n\cdot e$ if and only if $e=1$ or $e=3$, i.e. $D=2$ or $D=8$; in those two cases, $R_3$ is irreducible. 

\medskip
\noindent{\bf Case $n=3$ and $p=3$.}
In that case, for $e\geq 1$, 
\[\deg(R_n) = 1+D = 3^e+1>3e=n\cdot e = 3e.\] 
So, $R_n$ cannot be irreducible in that case.

\medskip
\noindent{\bf Case $n=4$ and $p=2$.}
In that case, for $e\geq 1$, 
\[\deg(R_n) = 1+D+D^2 = 1+3^e +3^{2e}>4e=n\cdot e.\] 
So, $R_n$ cannot be irreducible in that case.
\end{proof}


\begin{thebibliography}{BEK}

\bibitem[AK] {arfeuxkiwi} {\sc M. Arfeux} $\&$ {\sc J. Kiwi} {\em Irreducibility of the set of cubic polynomials with one periodic critical point}, Preprint, \url{https://arxiv.org/abs/1611.09281}

\bibitem[Bo] {bousch} {\sc T. Bousch} {\em Sur quelques probl\`emes de dynamique holomorphe}, Ph.D. thesis, Universit\'e de Paris-
Sud, Orsay, 1992.

\bibitem[Bu] {buff} {\sc X. Buff} {\em On Postcritically Finite Unicritical Polynomials}, Preprint, \url{https://www.math.univ-toulouse.fr/~buff/Preprints/Gleason/Gleason.pdf}.


%\bibitem[BEK] {misiu} {\sc X. Buff}, {\sc A.L. Epstein} $\&$ {\sc S. Koch} {\em Irreducibility and postcritically finite unicritical polynomials}, Preprint, \url{https://www.math.univ-toulouse.fr/~buff/Preprints/Misiurewicz/Misiurewicz.pdf}.

\bibitem[E] {integrality} {\sc A. L. Epstein} {\em Integrality and rigidity for postcritically finite polynomials}, Bull. London Math. Soc. 44 (2012), 39--46.

\bibitem[FJ] {finch} {\sc C. Finch} $\&$ {\sc L. Jones} {\em On the irreducibility of $\{-1,0,1\}$-quadrinomials}, Integers 6 (2006).

\bibitem[G] {goksel} {\sc V. Goksel} {\em On the orbit of a post-critically finite polynomial of the form $x^d+c$}, Preprint, \url{https://arxiv.org/abs/1806.01208}.

\bibitem[HT] {hutz} {\sc B. Hutz} $\&$ {\sc A. Towsley} {\em Misiurewicz points for polynomial maps
and transversality},  New York J. Math. 21 (2015) 297--319.


\bibitem[M1] {milnorquad} {\sc J. Milnor} {\em Geometry and dynamics of quadratic rational maps}, Experiment. Math. Volume 2, Issue 1 1993, 37--83.

\bibitem[M2] {milnorcubic} {\sc J. Milnor} {\em Cubic polynomials with periodic critical orbit, Part I}, In ``Complex Dynamics Families and Friends'', ed. D. Schleicher, A. K. Peters 2009, pp 333--411.

\bibitem[M3] {milnorunicritical} {\sc J. Milnor} {\em Arithmetic of unicritical polynomial maps}, Frontiers in Complex Dynamics: In Celebration of John Milnor's 80th Birthday 2012, 15--23.

\bibitem[R] {rees} {\sc M. Rees} {\em View of Parameter Space: Topographer and Resident}, Ast\'erisque 288, 2003.


\end{thebibliography}
\end{document}